\date{\today}
\begin{document}

\title[]
{Poincar\'e-Lelong type formulas and  
Segre numbers}

\def\la{\langle}
\def\ra{\rangle}
\def\End{{\rm End}}

\def\deg{\text{deg}\,}
\def\e{{\epsilon}}

\def\al{\text{\tiny $\aleph$}}
\def\w{\wedge}
\def\db{\bar\partial}
\def\dbar{\bar\partial}
\def\ddbar{\partial\dbar}
\def\loc{{\rm loc}}
\def\R{{\mathbb R}}
\def\C{{\mathbb C}}
\def\w{{\wedge}}
\def\P{{\mathbb P}}
\def\cp{{\mathcal C_p}}
\def\bl{{\mathcal B}}
\def\A{{\mathcal A}}
\def\B{{\mathcal B}}
\def\Cn{\C^n}
\def\ddc{dd^c}
\def\bbox{\bar\square}
\def\G{{\mathcal G}}
\def\od{\overline{D}}
\def\ot{\leftarrow}
\def\loc{\text{loc}}
\def\D{{\mathcal D}}
\def\M{{\mathcal M}}
\def\Hom{{\rm Hom\, }}
\def\codim{{\rm codim\,}}
\def\Tot{{\rm Tot\, }}
\def\Im{{\rm Im\, }}
\def\K{{\mathcal K}}
\def\Ker{{\rm Ker\,  }}
\def\Dom{{\rm Dom\,  }}
\def\Z{{\mathcal Z}}
\def\E{{\mathcal E}}
\def\can{{\rm can}}
\def\O{{\mathcal O}}
\def\Pop{{\mathcal P}}
\def\L{{\mathcal L}}
\def\Q{{\mathcal Q}}
\def\Re{{\rm Re\,  }}
\def\Res{{\rm Res\,  }}
\def\Aut{{\rm Aut}}
\def\L{{\mathcal L}}
\def\U{{\mathcal U}}
\def\Pk{{\mathbb P}}
\def\Ok{{\mathcal O}}
\def\sr{\stackrel}
\def\dv{{\rm div}}
\def\1{{\bf 1}}
\def\La{{\mathcal L}}
\def\rank{{\rm rank \,}}
\def\GZ{{\mathcal {GZ}}}
\def\J{{\mathcal J}}
\def\F{{\mathcal F}}
\def\mult{{\text{mult}}}
\def\k{{\kappa}}
\def\nbh{{neighborhood }}
\def\Ims{{{Im\,}}}
\def\Zk{{\mathbb Z}}
\def\No{{\mathcal N}}

\newtheorem{thm}{Theorem}[section]
\newtheorem{lma}[thm]{Lemma}
\newtheorem{cor}[thm]{Corollary}
\newtheorem{prop}[thm]{Proposition}

\theoremstyle{definition}

\newtheorem{df}[thm]{Definition}

\newtheorem{remark}[thm]{Remark}

\newtheorem{ex}[thm]{Example}

\numberwithin{equation}{section}

\date{\today}

\author{Mats Andersson}

\address{Department of Mathematics\\Chalmers University of Technology and University of Gothenburg\\
S-412 96 G\"OTEBORG\\SWEDEN}

\email{matsa@chalmers.se}

\subjclass{}

\keywords{}

\thanks{The author was
  partially supported by the Swedish Natural Science
  Research Council}

\begin{abstract} 
Let  $E$ and $F$ be Hermitian vector bundles over a
complex manifold $X$ and let $g\colon E\to F$ 
be a holomorphic morphism. We prove a  
Poincar\'e-Lelong type formula with a residue term $M^g$.
The currents $M^g$ so obtained have an expected functorial property.  
We discuss various applications: If $F$ has a trivial holomorphic subbundle
of rank $r$ outside the analytic  set $Z$, then we get  currents with support on $Z$ that
represent the Bott-Chern classes $\hat c_k(E)$ for $k >\rank E-r$.  
We also consider Segre and Chern forms associated with 
certain singular metrics on $E$.
The multiplicities (Lelong numbers) of the various
components of $M^g$  
only depend on the cokernel of the adjoint sheaf morphism $g^*$. This leads
to a notion of distinguished varieties and Segre numbers
of an arbitrary coherent sheaf, generalizing these notions, 
in particular the Hilbert-Samuel multiplicity,  in case of an ideal sheaf.
\end{abstract}

\maketitle

\section{Introduction}
Let $g$ be a non-trivial holomorphic (or meromorphic) section of  a  Hermitian  line bundle
$L\to X$,  $X$ a complex manifold of dimension $n$, and let $[\dv g]$ be the current of integration associated with the 
divisor defined by $g$.  The Poincar\'e-Lelong formula  states that
\begin{equation}\label{plbas}
dd^c\log|g|^2=[\dv g]-c_1(L),
\end{equation}
where $c_1(L)$ is the first Chern form associated with the Chern connection  
on $L$, i.e.,
$c_1(L)=(i/2\pi) \Theta_L$, where $\Theta_L$ is the curvature form.
Here and throughout this paper
$d^c=(i/2\pi)(\dbar-\partial)$; the constant varies in the literature and is chosen here 
so that 
 $dd^c\log|\zeta_1|^2=[\zeta_1=0]$. 
Thus   
the Bott-Chern class $\hat c_1(L)$ determined by $L$ has the current representative
$[\dv g]$ with support on the zero set $Z$ of $g$. This reflects the fact that
$L$ is trivial in the set  $X\setminus Z$.

Various generalizations to sections of, not necessarily holomorphic, higher rank bundles are 
found in, e.g.,  \cite{HL1}, \cite{MS}, and \cite{Apl}.  In \cite{HL2,HL3} is developed,
for a quite general class of smooth bundle morphisms $g\colon E\to F$, a technique to express 
any characteristic form of $E$ or of $F$ as a sum $L+dT$, where $L$ is a current with support on
the singular set $Z$ of $g$ and $T$ is locally integrable.  It is based on a transgression
that roughly speaking deforms the given connection on $E$ or $F$ so that the associated characteristic form 
concentrates on $Z$.

\smallskip
The aim of this paper is to present variants of \eqref{plbas} when
 $g\colon E\to F$ is a holomorphic morphism with equalities modulo $dd^c$-exact terms,
 and to give some applictions.
In case when $E$ is a trivial line bundle such a result was obtained in \cite{Apl}, using transgression relying on
the ideas and results in \cite{BC}.  

In this paper we use a completely different approach that works for $E$ of higher rank.
Let us first
consider again a section $g$ of the line bundle $L\to X$. 
Recall that $c(L)=1+c_1(L)$ is the full Chern form and 
that the Segre form of $L$ is 
$
s(L)=1/c(L)=1-c_1(L)+c_1(L)^2-\cdots.
$
If 
\begin{equation}\label{mg}
M^g=s(L)\w[\dv g]
\end{equation}
and $W^g=s(L)\log|g|^2$, then \eqref{plbas} can be reformulated as 
\begin{equation}\label{plommon}
dd^c W^g=M^g+s(L)-1.
\end{equation}
For a section  $g$ of  a Hermitian vector bundle $F\to X$ with zero set $Z$,  we introduced in
 \cite{A2,A3} the closed current current  
\begin{equation}\label{mg1}
M^g:=\sum_{k=0}^{\infty}  M^g_k
\end{equation}
where $M^g_k$ are the residues 
$
M^g_{k}:=\1_Z  [dd^c \log|g|^2]^{k}, \  k=0,1,2, \cdots, 
$
of the generalized  Monge-Amp\`ere products $ [dd^c \log|g|^2]^{k}$, see Section~\ref{ma-produkt}.
%
The currents $M^g_k$ are {\it generalized cycles}, a notion introduced in \cite{aeswy1}, see Section~\ref{gztrams}.
A generalized cycle $\mu$ of codimension $k$ has well-defined integer multiplicities $\mult_x\mu$ at each point $x$
 and a unique global decomposition into a (Lelong current of a) cycle of codimension $k$, the {\it fixed part},
 and the {\it moving part}; the multiplicities of the latter one vanish outside an analytic set of codimension $\ge k+1$.  
 In case $\mu$ is positive this is the Lelong numbers and its Siu decomposition of $\mu$, respectively. 
It was proved in \cite{aswy, aeswy1} that $\mult_x M^g_k$ coincide with the so-called {\it Segre numbers} of the
ideal $\J_x$ at $x$ generated by $g$, generalizing the Hilbert-Samuel multiplicity of $\J_x$,  
and that the fixed part of $M^g_k$ is the sum  (with multiplicities) of  the {\it distinguished varieties} of the ideal.
See Section~\ref{segresection}. %

\smallskip
Notice that a section $g$ of $F$ is can be considered as a morphism $X\times\C\to F$. 
For an arbitrary holomorphic morphism $g\colon E\to F$, where $E$ and $F$ are Hermitian vector bundles over $X$,
we introduce in this paper a current 
$M^g=M^g_0+\cdots +M^g_n$,
which coincides with $M^g$ above when $E$ is trivial.
The current $M^g$ has
support on the analytic set $Z$ where $g$ is not injective.
Here $M^g_k$ are closed currents of bidegree $(k,k)$ and in fact generalized cycles.
Notice that  
$\Im g$ is a subbundle of $F$ over $X\setminus Z$ and thus the associated Segre form $s(\Im g)$ is defined there,
cf.~Section~\ref{prel}. 
Our first main result  is 

\begin{thm}\label{thmA}
With the notation above  
$ \1_{X\setminus Z}s(\Im g)$ is locally integrable in $X$ and there is a
current $W^g$ with singularities along $Z$ such that
\begin{equation}\label{main1}
dd^c W^g= M^g+  \1_{X\setminus Z}s(\Im g)  -s(E).
\end{equation}
\end{thm}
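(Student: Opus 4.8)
The plan is to construct $W^g$ explicitly by a transgression argument and then read off \eqref{main1} by differentiating. Concretely, I would introduce an auxiliary parameter: replace $g$ by $g_\e\colon E\to F\oplus(X\times\C)$, $g_\e=(g,\e)$, which is injective for $\e\neq 0$, so that $\Im g_\e$ is a genuine subbundle of $F\oplus\underline\C$ and the quotient bundle $Q_\e=(F\oplus\underline\C)/\Im g_\e$ is well-defined with an induced Hermitian metric. On $X$ we then have the Chern form identity $s(E)=s(\Im g_\e)=c(Q_\e)^{-1}c(F\oplus\underline\C)^{-1}$ coming from the short exact sequence, hence a smooth form, say $s_\e$, that interpolates: for $\e\neq 0$ it equals $s(E)$ (via the metric isomorphism $E\cong\Im g_\e$), and as $\e\to 0$ it should tend to $\1_{X\setminus Z}s(\Im g)$ plus residue currents supported on $Z$. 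The point of the $\e$-regularization is exactly that $\Im g$ degenerates along $Z$ while $\Im g_\e$ never does.

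Next I would set up the transgression. Writing $s_\e$ as the Chern--Weil form of the metric connection on $E$ pulled back through the isomorphism $E\xrightarrow{\sim}\Im g_\e\hookrightarrow F\oplus\underline\C$, I compare it with $s(E)$ computed directly from the Chern connection on $E$. Both represent the same Bott--Chern class, so there is an explicit transgression form $W^g_\e$, built from the relevant super-connection / Chern--Simons type formula, with $dd^c W^g_\e = s_\e - s(E)$. The form $W^g_\e$ will involve $\log$ of the norm of the section $\e\oplus g$ in a suitable sense, i.e. $\log|g|^2_{F\oplus\underline\C,\e}=\log(|g|^2+\e^2)$ type expressions, which is where the logarithmic singularities along $Z$ enter. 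I then define $W^g:=\lim_{\e\to 0}W^g_\e$ and $M^g_k$ as the residues appearing in this limit, consistently with the definitions \eqref{mg1} and the Monge--Amp\`ere product construction of Section~\ref{ma-produkt} referenced in the excerpt; indeed for $\rank E=1$ this should reproduce \eqref{mg} and \eqref{plommon}.

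The analytic heart of the argument is the limit passage, and that is where I expect the main obstacle. I need: (i) local integrability of $\1_{X\setminus Z}s(\Im g)$, which I would get by resolving the singularities of $\Im g$ — pull back by a modification $\pi\colon\tilde X\to X$ so that $\pi^*\Im g$ becomes, after dividing by a normal-crossings divisor, a subbundle, compute $s$ upstairs where it is smooth, and push forward; integrability downstairs follows because the obstruction is a log pole along a divisor; (ii) existence of $\lim_{\e\to 0}W^g_\e$ as a current with log-type singularities along $Z$ — again cleanest on $\tilde X$, where $\log(|g|^2+\e^2)$ and its differentials have controlled, monomial-type behavior, so the limit exists and is log-type, and this property is preserved under $\pi_*$; (iii) commuting $dd^c$ with $\lim_{\e\to 0}$ to conclude $dd^c W^g=\lim(s_\e-s(E))=M^g+\1_{X\setminus Z}s(\Im g)-s(E)$. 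Step (iii) is the subtle one: the identity $dd^c W^g_\e=s_\e-s(E)$ holds with no residue for each fixed $\e\neq 0$, but the limits of the two sides need not commute with $dd^c$, and precisely the defect is the residue current $M^g=\sum M^g_k$. I would control this by the standard technique of writing $s_\e=dd^c W^g_\e+s(E)$, taking $\e\to 0$ of $W^g_\e$ first (which exists by (ii)), and identifying $M^g:=dd^c W^g-\1_{X\setminus Z}s(\Im g)+s(E)$; the content is then to show this current is supported on $Z$ (clear, since away from $Z$ everything is smooth and the identity $dd^c W^g=s(\Im g)-s(E)$ holds classically) and that it decomposes as $\sum_{k} M^g_k$ with $M^g_k$ the closed $(k,k)$ generalized cycles of \eqref{mg1}. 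The closedness of $M^g_k$ and their being generalized cycles I would import from the structure results referenced in Section~\ref{gztrams}, after checking on $\tilde X$ that $M^g$ is a push-forward of components of normal-crossings divisors times smooth forms, which is the defining form of a generalized cycle.
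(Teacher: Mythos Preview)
Your regularization $g_\e=(g,\e)\colon E\to F\oplus(X\times\C)$ only makes sense when $\rank E=1$: a scalar $\e$ is not a morphism $E\to\underline\C$ when $r=\rank E\ge 2$. The natural fix is $g_\e=g\oplus\e\cdot{\rm id}_E\colon E\to F\oplus E$, but even granting that, there is a more serious gap. You define $M^g$ as ``the residues appearing in this limit'' and then assert consistency with \eqref{mg1}. But \eqref{mg1} only covers $r=1$; for $r\ge 2$ the current $M^g$ in Theorem~\ref{thmA} is the specific object of Definition~\ref{mdef}, namely $M^g=p_*\big(s(L)\wedge\mathring M^{g\alpha}\big)$ on $\Pk(E)$. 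Your proposal gives no mechanism for identifying the $\e\to 0$ defect of your transgression with this push-forward, and that identification is the entire content of the theorem. Without it, your formula $M^g:=dd^cW^g-\1_{X\setminus Z}s(\Im g)+s(E)$ is a tautology, not a proof.

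The paper's route is different and avoids the matching problem altogether. It passes to $\Pk(E)$ and works with the section $G=g\alpha$ of the \emph{line} bundle $p^*F\otimes L^*$, so that the higher-rank case is reduced to the rank-one Monge--Amp\`ere theory of Section~\ref{ma-produkt}. One sets $w=\log(|g\alpha|^2/|\alpha|^2)$, defines the locally integrable $g^*s(L')=\sum_\ell\langle dd^c\log|g\alpha|^2_\circ\rangle^\ell$, and verifies directly on $\Pk(E)$ that
\[
dd^c\big(w\,s(L)\,g^*s(L')\big)=s(L)\wedge\mathring M^{g\alpha}+g^*s(L')-s(L).
\]
Then $W^g:=p_*\big(w\,s(L)\,g^*s(L')\big)$, and the push-forward of the displayed identity is \eqref{main1} once one checks $p_*g^*s(L')=\1_{X\setminus Z}s(\Im g)$ (Lemma~\ref{lmakork}) and $p_*s(L)=s(E)$. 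The logarithmic-type singularities of $W^g$ are read off from a principalization $\pi\colon Y\to\Pk(E)$ of $G$. No $\e$-limit is needed for the formula itself; regularizations such as Proposition~\ref{mreg1} are derived afterwards, not used as the definition. Your resolution-of-singularities idea for local integrability is in the right spirit, but the paper performs it upstairs on $\Pk(E)$ rather than on $X$, which is what makes the residue term appear directly as $s(L)\wedge\mathring M^{g\alpha}$ and hence as the already-defined $M^g$ after push-forward.
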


If $E$ is trivial and $F$ is a line bundle, then \eqref{main1} is precisely \eqref{plommon}.
In case $E$ is a line bundle Theorem~\ref{thmA} as well as other results in this paper 
are readily deduced from \cite{Apl} combined with \cite{aeswy1}, cf.~Remark~\ref{apl}.
The substantial novelty therefore is when $E$ has higher rank. 

\smallskip
Further properties of $W^g$ and $M^g$  are stated in Theorem~\ref{thmett}.  For instance, the multiplicities
$\mult_xM_k$ are non-negative, and independent of the metrics on $E$ and $F$.
Moreover,  $M^g$ satisfy a certain functorial property so that its definition is determined
by the case when $g$ is generically an isomorphism. Then 
$Z$ has positive codimension (unless $X$ is a point) and thus $\1_{X\setminus Z} s(\Im g)=s(F)$.
We have  the following direct generalization of \eqref{plommon}. 

\begin{cor}\label{ansgar1}
If  $g\colon E\to F$ is generically an isomorphism,  then 
 \begin{equation}\label{main111}
dd^c W^g=M^g +s(F)-s(E).
\end{equation}
 \end{cor}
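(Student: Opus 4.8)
The plan is to deduce this directly from Theorem \ref{thmA} by identifying the term $\1_{X\setminus Z}s(\Im g)$ with the global Segre form $s(F)$ under the hypothesis that $g$ is generically an isomorphism. First I would observe that if $g\colon E\to F$ is generically an isomorphism then necessarily $\rank E=\rank F$, and the set $Z$ where $g$ fails to be injective coincides with the set where $g$ fails to be an isomorphism; this is a proper analytic subset of $X$. Unless $X$ is a single point (in which case the statement is the trivial identity $0=0$ after noting $s(E)=s(F)=1$ and $W^g$, $M^g$ are zero, or reduces to a finite-dimensional linear-algebra computation), $Z$ therefore has codimension $\ge 1$ in $X$.

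The key step is then: over $X\setminus Z$ the morphism $g$ is a bundle isomorphism $E\to F$, so $\Im g=F$ there, and hence $\1_{X\setminus Z}s(\Im g)=\1_{X\setminus Z}s(F)$ as forms on $X\setminus Z$. Since $s(F)$ is a smooth form on all of $X$ and $Z$ has positive codimension, it has zero Lebesgue measure, so $\1_{X\setminus Z}s(F)=s(F)$ as currents on $X$. Combined with the local integrability of $\1_{X\setminus Z}s(\Im g)$ furnished by Theorem \ref{thmA}, this gives the equality of currents $\1_{X\setminus Z}s(\Im g)=s(F)$ on all of $X$. Substituting this into \eqref{main1} yields \eqref{main111}.

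I do not expect any real obstacle here; the content is entirely in Theorem \ref{thmA}, and the corollary is just the specialization in which the generically-defined form $s(\Im g)$ extends (trivially) across $Z$ to the globally smooth form $s(F)$. The only point requiring a word of care is the degenerate case $\dim X=0$: there $Z$ is either empty or all of $X$, and one should note that the argument "$Z$ has zero measure" fails, but then $g$ being generically an isomorphism at a point means $g$ is an isomorphism, so $Z=\emptyset$, $\Im g=F$ everywhere, and the formula holds trivially. Otherwise the codimension bound $\codim Z\ge 1$ does all the work.
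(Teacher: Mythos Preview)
Your proposal is correct and matches the paper's own reasoning essentially verbatim: the paper notes just before the corollary that when $g$ is generically an isomorphism, $Z$ has positive codimension (unless $X$ is a point) and hence $\1_{X\setminus Z}s(\Im g)=s(F)$, which plugged into Theorem~\ref{thmA} gives \eqref{main111}. Your extra care about the zero-dimensional case is not in the paper but is harmless.
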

 We have variants of  \eqref{plbas};  notice that $c(F)\w M^g$ is
a current with support on $Z$. 

\begin{prop}\label{avig}
 Assume that $E$ is trivial with trivial metric and $g$ is generically injective.
 Then $\1_{X\setminus Z} c(F/\Im g)$
 is a locally integrable closed current in $X$ and 
 there is a current $V^g$ with singularities  along $Z$ such that  
\begin{equation}\label{main123}
dd^c V^g
=c(F)\w M^{g}+
\1_{X\setminus Z}c(F/\Im g)-c(F). 
\end{equation}
\end{prop}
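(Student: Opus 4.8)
The plan is to derive \eqref{main123} from Theorem~\ref{thmA} by multiplying through by the smooth closed form $c(F)$. Since $E$ is trivial with the trivial metric, $s(E)=1$, so \eqref{main1} reads $\ddc W^g=M^g+\1_{X\setminus Z}s(\Im g)-1$, and since $c(F)$ is smooth and $\ddc$-closed,
\[
\ddc\big(c(F)\w W^g\big)=c(F)\w M^g+c(F)\w\1_{X\setminus Z}s(\Im g)-c(F).
\]
It therefore suffices to establish that (i) $\1_{X\setminus Z}c(F/\Im g)$ is a locally integrable closed current on $X$, and (ii) there is a current $R^g$ with singularities of logarithmic type along $Z$ with $c(F)\w\1_{X\setminus Z}s(\Im g)-\1_{X\setminus Z}c(F/\Im g)=\ddc R^g$; for then $V^g:=c(F)\w W^g-R^g$ works and has singularities of logarithmic type since $W^g$ does and $c(F)$ is smooth.

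For (ii), I would first work on $X\setminus Z$, where $\Im g$ is a genuine subbundle with the metric induced from $F$ and $F/\Im g$ carries the quotient metric, which under the $C^\infty$ identification $F/\Im g\cong(\Im g)^\perp$ is the restriction of $h_F$. For the short exact sequence $0\to\Im g\to F\to F/\Im g\to 0$ the Bott--Chern transgression gives a smooth form $T$ on $X\setminus Z$, built from the orthogonal projection onto $\Im g$, with $c(F)-c(\Im g)\w c(F/\Im g)=\ddc T$. Multiplying by the $d$-closed form $s(\Im g)=c(\Im g)^{-1}$ and using $s(\Im g)\w c(\Im g)=1$ gives $c(F)\w s(\Im g)-c(F/\Im g)=\ddc\big(s(\Im g)\w T\big)$ on $X\setminus Z$. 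So (ii) should hold with $R^g$ the trivial extension of $s(\Im g)\w T$ across $Z$, provided this extension --- and $c(F/\Im g)$ itself --- have the required regularity.

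The hard part is precisely this regularity across $Z$. The orthogonal projection $\pi$ onto $\Im g$, and hence the induced metric on $\Im g$, the quotient metric on $F/\Im g$, and the transgression $T$, are given by universal rational expressions in $g$, $\bar g$, $h_F$ and the inverse of the Gram matrix $g^*h_Fg$; after a Hironaka modification $p\colon\widetilde X\to X$ along which $\Im g$ becomes a subbundle twisted by an effective divisor these expressions become monomials times smooth invertible factors, i.e.\ they are almost semi-meromorphic along $Z$. From this I would deduce that $\1_{X\setminus Z}c(F/\Im g)$ is almost semi-meromorphic, in particular locally integrable --- this is (i) --- and, using that $\1_{X\setminus Z}s(\Im g)$ has a locally integrable trivial extension (Theorem~\ref{thmA}), that $s(\Im g)\w T$ admits a trivial extension $R^g$ with singularities of logarithmic type. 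What remains to be checked with care is that $\ddc R^g$ equals the trivial extension of $\ddc\big(s(\Im g)\w T\big)$, i.e.\ that passing to trivial extensions creates no extra term supported on $Z$; this is cleanest to verify upstairs on $\widetilde X$. Closedness in (i) is then automatic, since $\1_{X\setminus Z}c(F/\Im g)$ differs from the closed current $c(F)\w\1_{X\setminus Z}s(\Im g)$ by $\ddc R^g$. I expect the regularity and the no-residue statement to be the technical core of the argument, the remaining steps being formal.
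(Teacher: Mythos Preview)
Your strategy is correct and coincides with the paper's: multiply \eqref{main1} by the smooth closed form $c(F)$ (using $s(E)=1$), and then account for the difference $c(F)\wedge\1_{X\setminus Z}s(\Im g)-\1_{X\setminus Z}c(F/\Im g)$ via the Bott--Chern transgression for the short exact sequence $0\to\Im g\to F\to F/\Im g\to 0$.

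The paper's execution is more direct than yours, and in fact sidesteps the ``no-residue'' verification you flag as the technical core. Since $E$ is trivial, $E^*$ is a direct sum of line bundles, so Lemma~\ref{utvid} (applied to $g^*\colon F^*\to E^*$) yields a modification $\pi\colon\widetilde X\to X$ on which $\Im\pi^*g$ extends to an honest subbundle $H\subset\pi^*F$ --- not merely ``a subbundle twisted by an effective divisor'' as you write. Upstairs one has the \emph{smooth} exact sequence $0\to H\to\pi^*F\to\pi^*F/H\to 0$ and hence a smooth $v$ with $dd^c v=c(\pi^*F)-c(\pi^*F/H)\,c(H)$; multiplying by the smooth form $s(H)$ gives $dd^c(s(H)\wedge v)=c(\pi^*F)\wedge s(H)-c(\pi^*F/H)$. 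Applying $\pi_*$ one obtains in a single stroke that $\1_{X\setminus Z}c(F/\Im g)=\pi_*c(\pi^*F/H)$ is locally integrable and closed, and that $dd^c\pi_*(s(H)\wedge v)=c(F)\wedge\1_{X\setminus Z}s(\Im g)-\1_{X\setminus Z}c(F/\Im g)$. Then $V^g=c(F)\wedge W^g-\pi_*(s(H)\wedge v)$. Because $s(H)\wedge v$ is smooth on $\widetilde X$, the pushforward $\pi_*(s(H)\wedge v)$ is automatically of logarithmic type (take $\alpha_j=0$ in the definition), so no separate analysis of the singularities of your $R^g$ is needed. Your plan to ``verify upstairs on $\widetilde X$'' would end up reproducing exactly this computation; doing the transgression upstairs from the outset is simply cleaner.
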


Since $c_k(F/\Im g)=0$ in $X\setminus Z$ for $k>m-r$, $r=\rank E$,
we get from \eqref{main123}:

\begin{cor}\label{cor123}
If $g_1,\ldots,g_r$ are sections of $F$ that are linearly independent outside $Z$, 
then there are currents $V_k^g$ such that 
\begin{equation}\label{repet}
dd^c  V_{k-1}^g =(c(F)\w M^{g})_k -c_k(F), \quad k>m-r.
\end{equation}
\end{cor}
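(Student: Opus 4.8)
The plan is to derive Corollary~\ref{cor123} as a direct specialization of Proposition~\ref{avig}. First I would set $E=X\times\C^r$ with the trivial metric and take $g$ to be the morphism $E\to F$ whose action on the standard frame $e_1,\ldots,e_r$ is $g(e_j)=g_j$; since the $g_j$ are linearly independent outside $Z$, the morphism $g$ is injective there, so $\Im g$ is a rank $r$ subbundle of $F$ over $X\setminus Z$ and all the hypotheses of Proposition~\ref{avig} are met. Applying \eqref{main123} to this $g$ yields a current $V^g$ of logarithmic type with
\begin{equation*}
dd^c V^g=c(F)\w M^g+\1_{X\setminus Z}c(F/\Im g)-c(F).
\end{equation*}

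Next I would extract the component of bidegree $(k,k)$ from this identity for a fixed $k>m-r$. The middle term $\1_{X\setminus Z}c(F/\Im g)$ is a closed locally integrable form that on $X\setminus Z$ equals the full Chern form of the rank $m-r$ bundle $F/\Im g$; hence its component of degree $k$ vanishes identically on $X\setminus Z$ whenever $k>m-r$, and since the current has no mass on $Z$ (it is the trivial extension of an $L^1_{\loc}$ form) its degree-$k$ part is zero as a current on all of $X$. Therefore the degree-$k$ component of the displayed equation reads $dd^c(V^g)_{k-1}=(c(F)\w M^g)_k-c_k(F)$, where $(V^g)_{k-1}$ denotes the bidegree $(k-1,k-1)$ part of $V^g$. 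Setting $V^g_{k-1}:=(V^g)_{k-1}$, which again has singularities of logarithmic type along $Z$ because that notion is preserved under taking graded components, gives exactly \eqref{repet}.

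The only point requiring a little care — and what I would flag as the main obstacle, though it is minor — is justifying that the trivial extension across $Z$ of the form $\1_{X\setminus Z}c(F/\Im g)$ has vanishing degree-$k$ component as a \emph{current}, not merely away from $Z$. This follows from the local integrability and closedness asserted in Proposition~\ref{avig}: a closed $L^1_{\loc}$ current that vanishes on the complement of the analytic set $Z$, which has positive codimension, must vanish identically, since it can carry no current supported on a set of measure zero once it is known to be locally integrable. With that in hand the extraction of bidegrees is purely formal, and no further analysis of $M^g$ or of the logarithmic-type estimates is needed beyond what Proposition~\ref{avig} already provides.
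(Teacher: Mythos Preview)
Your proof is correct and follows exactly the same route as the paper: apply Proposition~\ref{avig} with $E=X\times\C^r$ (trivial metric) and then take the bidegree-$(k,k)$ component of \eqref{main123}, using that $c_k(F/\Im g)=0$ on $X\setminus Z$ for $k>m-r$ since $F/\Im g$ has rank $m-r$ there. Your extra care about the vanishing across $Z$ is fine but slightly over-argued: since $\1_{X\setminus Z}c(F/\Im g)$ is by definition an $L^1_{\loc}$ current whose density vanishes a.e.\ in degree $k>m-r$, it is simply the zero current in those degrees---closedness is not needed for this step.
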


The $g_i$ define a  trivial subbundle $F$ of rank $k$ in $X\setminus Z$. It is therefore expected  
that the Bott-Chern classes $\hat c_k(F)$, $k>m-r$, can be represented by  currents that have support on 
on $Z$.  In case $r=1$, Corollary~\ref{cor123} appeared in \cite{Apl},
cf.~Remark~\ref{apl} below.

\smallskip

We now turn our attention to a slightly different generalization of the Poincar\'e-Lelong formula. 
Assume that $g\colon E\to F$ is a morphism  as before and that $g$ has optimal rank on $X\setminus Z_0$. 
In this open set we have the short exact sequence $0\to \Ker g\to E\to \Im g\to 0$ and hence
the (non-isometric) isomorphism
$
a\colon E/\Ker g \simeq  \Im g. 
$
Therefore there is a smooth form $w$ in $X\setminus Z_0$ such that 
$dd^c w= s(\Im g)-s(E/\Ker g).$ 
We have an extension across $Z_0$:

\begin{thm}\label{thmB}  
The natural extensions  $\1_{X\setminus Z_0}s(E/\Ker g)$ and $\1_{X\setminus Z_0}s(\Im g)$ are locally integrable 
and closed. There is a current $M^a$  with support on
$Z_0$ and a current  $W^a$ such that 
\begin{equation}\label{trelikhet}
dd^c W^a=M^a+\1_{X\setminus Z_0}s(\Im g) -\1_{X\setminus Z_0}s(E/\Ker g).
\end{equation}
\end{thm}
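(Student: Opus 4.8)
The plan is to reduce Theorem~\ref{thmB} to two local analytic facts. Over $X\setminus Z_0$ all the sheaves involved are locally free and $a\colon E/\Ker g\to\Im g$ is a bundle isomorphism; equipping $E/\Ker g$ with its quotient metric and $\Im g$ with the metric induced from $F$, the form $w$ from the discussion preceding the theorem may be taken to be the explicit Bott--Chern transgression between these two Hermitian metrics on the single holomorphic bundle $\Im g$, so that $w$ is smooth on $X\setminus Z_0$ and $dd^c w=s(\Im g)-s(E/\Ker g)$ there, with no residue term (equivalently, apply Corollary~\ref{ansgar1} to $a$ over the manifold $X\setminus Z_0$; since $a$ is injective throughout $X\setminus Z_0$ the residue term vanishes). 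Granting that (i) $\1_{X\setminus Z_0}s(\Im g)$ and $\1_{X\setminus Z_0}s(E/\Ker g)$ are locally integrable and closed currents in $X$, and (ii) $w$ is locally integrable near $Z_0$ and extends to a current $W^a$ on $X$ with singularities of logarithmic type along $Z_0$, the theorem follows: put $M^a:=dd^c W^a-\1_{X\setminus Z_0}s(\Im g)+\1_{X\setminus Z_0}s(E/\Ker g)$. On $X\setminus Z_0$ we have $dd^c W^a=dd^c w=s(\Im g)-s(E/\Ker g)$, so $M^a$ is supported on $Z_0$, and \eqref{trelikhet} holds by construction.

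For (i) I would reduce to Theorem~\ref{thmA}. Over $X\setminus Z_0$ the quotient $E/\Ker g$ with its quotient metric is isometrically isomorphic to the subbundle $(\Ker g)^\perp$ of $E$ with the induced metric, and $(\Ker g)^\perp=\Im g^*$ there, where $g^*\colon F\to E$ is the adjoint morphism; hence $\1_{X\setminus Z_0}s(E/\Ker g)=\1_{X\setminus Z_0}s(\Im g^*)$ as currents on $X$. Both $\Im g$ and $\Im g^*$ are images of holomorphic morphisms, and the statement is local: near a point, choosing $r_0:=\rank\Im g$ columns of $g$ (in fixed frames) that are independent at a generic point gives a generically injective morphism $g'\colon\O^{r_0}\to F$ with $\Im g'=\Im g$ off an analytic set $Z'\supseteq Z_0$, whence $\1_{X\setminus Z_0}s(\Im g)=\1_{X\setminus Z'}s(\Im g')$ as locally integrable currents, the two coinciding off the null set $Z'$. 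By Theorem~\ref{thmA} applied to $g'$ this extension is locally integrable, and it is closed because by \eqref{main1} it equals $dd^c W^{g'}-M^{g'}+s(\O^{r_0})$, a sum of closed currents. The same argument applied to $g^*$ handles $\Im g^*$.

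The heart of the matter is (ii), the behavior of $w$ near $Z_0$, i.e.\ the growth of the density of the metric on $\Im g$ transported from $E/\Ker g$ relative to the one induced from $F$, as one approaches the locus $Z_0$ where $\rank g$ drops. I would handle this by resolution of singularities: pass to a smooth modification $\pi\colon\widetilde X\to X$, biholomorphic over $X\setminus Z_0$, along which the Fitting ideals of $g$ become principal, so that on $\widetilde X$ both $S:=\Im(\pi^*g)\subseteq\pi^*F$ and $S':=(\Ker\pi^*g)^\perp\subseteq\pi^*E$ are subbundles and the above relative density is, in suitable local coordinates, a monomial $|z^\alpha|^2$ times a smooth positive factor (monomialization of $g$). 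Then $\pi^*w$ is a finite sum of terms of the form $(\log|z^\alpha|^2)^j\wedge(\text{smooth form})$, hence locally integrable on $\widetilde X$ with singularities of logarithmic type along the normal crossings divisor $\widetilde Z:=\pi^{-1}(Z_0)$; set $W^a:=\pi_*(\text{this extension})$, which is then locally integrable on $X$ of logarithmic type along $Z_0$. On $\widetilde X$ one has $dd^c(\pi^*w)=s(S)-s(S')+R$ with $R$ supported on $\widetilde Z$ (off $\widetilde Z$ this is just $\pi^*(dd^c w)$ together with $\pi^*s(\Im g)=s(S)$ and $\pi^*s(E/\Ker g)=s(S')$). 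Pushing this forward by $\pi_*$ and using (i) (so that the difference of $\pi_*s(S)$, resp.\ $\pi_*s(S')$, and the corresponding trivial extension is supported on $Z_0$) rewrites the identity as \eqref{trelikhet} with $M^a$ supported on $Z_0$.

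I expect the monomialization step in (ii) to be the main obstacle: producing a single modification $\pi$ after which both the relative metric density is monomial along a normal crossings divisor and the image sheaves $\Im(\pi^*g)$, $(\Ker\pi^*g)^\perp$ are locally free, so that $\pi^*w$ and the transgressed identity become completely explicit. A secondary point --- needed for the finer properties asserted for $M^a$ elsewhere, but not for the bare existence statement here --- is independence of the construction of these auxiliary choices.
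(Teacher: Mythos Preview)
Your overall strategy---pass to a modification where the kernel and image extend to subbundles, work there, and push forward---is exactly the paper's. Your argument for (i), reducing $s(\Im g)$ to $s(\Im g')$ for a generically injective $g'$ obtained by selecting columns (and likewise for $s(E/\Ker g)=s(\Im g^*)$), is a valid alternative to what the paper does; the paper instead deduces local integrability and closedness directly from the modification, noting that $\pi_*s(\pi^*E/N')$ and $\pi_*s(\Im a')$ are push-forwards of smooth closed forms.

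Step (ii), however, has a real gap. You attempt to show that the specific transgression $w$ (smooth on $X\setminus Z_0$) pulls back to something locally integrable of log type on $\widetilde X$ via ``monomialization.'' But principalizing the Fitting ideals only makes $\det a'$ locally monomial; it does not diagonalize the morphism, and for rank $>1$ the Bott--Chern transgression between two metrics on a bundle is not simply $\log|\det a'|^2$. Your claimed form ``finite sum of $(\log|z^\alpha|^2)^j\wedge(\text{smooth})$'' is neither justified nor of the log type required by the paper's definition (which demands $\sum_j\alpha_j\log|\tau_j|^2+\beta$ with $\alpha_j$ \emph{closed}, no higher powers). Even if $W^{a'}$ from the paper is log type on $\widetilde X$, your $\pi^*w$ differs from it a priori by an arbitrary $dd^c$-closed smooth form on $\widetilde X\setminus\widetilde Z$, which need not extend.

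The paper avoids this analysis altogether. Once $\Ker\pi^*g$ extends to a subbundle $N'\subset\pi^*E$ on $\widetilde X$ (Lemma~\ref{utvid}), the induced map $a'\colon\pi^*E/N'\to\pi^*F$ is a generically injective morphism between genuine Hermitian bundles on all of $\widetilde X$. Theorem~\ref{thmett}(i) then applies \emph{directly to $a'$ on $\widetilde X$}, yielding a log-type $W^{a'}$ and a residue $M^{a'}$ with
\[
dd^cW^{a'}=M^{a'}+\1_{\widetilde X\setminus\widetilde Z}s(\Im a')-s(\pi^*E/N').
\]
Setting $W^a:=\pi_*W^{a'}$, $M^a:=\pi_*M^{a'}$ and pushing forward gives \eqref{trelikhet} at once, since $\pi_*s(\pi^*E/N')=\1_{X\setminus Z_0}s(E/\Ker g)$ and $\pi_*\1_{\widetilde X\setminus\widetilde Z}s(\Im a')=\1_{X\setminus Z_0}s(\Im g)$. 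There is no need to extend any pre-chosen $w$; the theorem only asks for \emph{some} $W^a$, and the paper manufactures one upstairs.
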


In Section~\ref{plakat} we give an extended version  (Theorem~\ref{thmtre}). The current $M^a$ is
(at least locally) a generalized cycle and it turns out that $\mult_x M^a_k$ may be negative.  

\smallskip
In Section~\ref{BC-section} we discuss  Chern and Segre forms
associated with some singular metrics on a vector bundle. 
A notion of distinguished varieties and 
Segre type numbers of a general coherent sheaf are discussed in Section~\ref{segresection}.  
In case $Z=\{x\}$ is a point the number $\mult_x M^g_n$ is equal to the so-called Buchsbaum-Rim
multiplicity, \cite{BR}, see Remark~\ref{brjox}.

The plan for the rest of the paper is as follows. 
In Section~\ref{prel} we have collected material that is known, 
except for the regularization in Propositions~\ref{vprop} and \ref{circvprop}. Then we discuss
modifications that admit extensions of certain generically defined subbundles in
Section~\ref{extsub}.  In Sections~\ref{defsec}, \ref{proofsection} and ~\ref{behaviour}
we define $M^g$ and state and prove the main results. 
The proofs rely on results from \cite{aeswy1} and \cite{Kalm},
and are inspired by \cite{LRS,LRSW}.    
A new Siu type result for generalized cycles, proved in Section~\ref{treproofapa},
is crucial for the proof of Theorem~\ref{thmtre}.  
In the last section,  Section~\ref{exsection},  we compute various examples that aim to shed 
light on the notions and results.

\vspace{.5cm}

\section{Preliminaries}\label{prel}

Throughout this paper $X$ is a connected complex manifold of dimension $n$.

\subsection{Singularities of logarithmic type}\label{prel1}
A current $W$ on $X$ is of logarithmic type along the subvariety $Z$, cf.~\cite{BGS}, if $W$ is smooth
in $X\setminus Z$, locally integrable in $X$, and so that the following holds:
Each point on $Z$ has a \nbh $U$ such that $W|_U$ is the direct image under a proper mapping $h\colon \widetilde U\to U$
of a smooth form $\gamma$ in $h^{-1}(U\setminus \Z)$ that locally in $\widetilde U$ has the form 
$\gamma= \sum_j \alpha_j\log|\tau_j|^2 +\beta$,  where $\alpha_j,\beta$ are smooth forms, 
$\alpha_j$ are closed, and $\tau_j$ are local coordinates. 

This requirement is imposed, see, e.g., \cite{BGS,Soule}, to make it possible to
define multiplication of $v$ and the Lelong current of 
another variety intersecting $Z$ properly.  
In this paper we use this notion merely to point out that the current in question has quite simple
singularities.

\subsection{Segre and Chern classes}\label{prel2}
Assume that $\pi\colon E\to X$ is a holomorphic vector bundle, let $\Pk(E)$ be its projectivization
(so that at each point $x\in X$ the fiber consists of all lines through the origin in $E_x$),
and let $p\colon \Pk(E)\to X$ be the natural submersion.  Consider the pullback
$p^*E\to \Pk(E)$ and let $L=\Ok(-1)\subset p^*E$ be the tautological line bundle, equipped with the induced  Hermitian
metric, and Chern form $c(L)=1+c_1(L)$. Then
\begin{equation}\label{segreclass}
s(E)=p_*(1/c(L))=\sum_{k=0}^\infty (-1)^k p_* c_1(L)^k
\end{equation}
and 
\begin{equation}\label{chern0}
c(E)=\frac{1}{s(E)}.
\end{equation}
Since $p$ is a submersion,  $s(E)$ and $c(E)$ are smooth closed forms.
It is proved in \cite{Mour} that this definition of Chern form of $E$ coincides with the differential-geometric definition
\begin{equation}
c(E)= \det \big(I_E+\frac{i}{2\pi}\Theta_E\big),
\end{equation}
where $\Theta_E$ is the  curvature tensor associated with the Chern connection.  

If $h\colon X'\to X$ is a holomorphic mapping, then 
\begin{equation}\label{anka1}
s(h^*E)=h^*s(E), \quad c(h^*E)=h^*c(E).
\end{equation}

If  $g\colon E\to E'$ is an holomorphic vector bundle isomorphism, then we have an induced biholomorphic mapping
$\tilde g\colon \Pk(E)\to \Pk(E')$. If $L'$ is the tautological line bundle over $\Pk(E')$,
then $L=\tilde g^* L'$. If $E$ and $E'$ are Hermitian,  then  there is a smooth form $w$ such that
$
dd^c w=s(E')-s(E).
$

More generally,  
$ 
0\to S\to E\to Q\to 0
$ 
is a short exact sequence of holomorphic Hermitian vector bundles over $X$, then, see \cite{BC},  there is a smooth form
$v$ so that
\begin{equation}\label{chern1}
dd^c v=c(E)-c(Q)c(S).
\end{equation}
It follows from \eqref{chern0} that we have a similar relation for the Segre forms. In fact, if
$w=-s(E)s(Q)s(S) v$, then
$dd^c w=s(E)-s(Q)s(S)$.

\subsection{Generalized cycles}\label{gztrams}
Let $\Z(X)$ be the   $\Zk$-module of analytic cycles on $X$; i.e.,  
locally finite sums 
$$
\sum a_j Z_j,
$$
where $Z_j$ are irreducible subvarieties $Z$ of $X$.  Such a sum can be identified with its Lelong current
$$
\sum a_j [Z_j].
$$

Let $\tau\colon W\to X$ be a proper holomorphic mapping, and let 
$\gamma=c_{k_1}(E_1)\cdots c_{k_\rho}(E_\rho)$ be a product of 
components of Chern forms   of Hermitian vector bundles $E_1,\ldots, E_\rho$ over $W$.
Then
$\tau_*\gamma$ is a closed current of order $0$ on $X$. Let  $\GZ(X)$
be the $\Zk$-module of
all locally finite sums of such currents. If we identify  cycles with their Lelong currents
we get a natural inclusion $\Z(X)\subset \GZ(X)$.  
This module was introduced in
\cite{aeswy1} and all properties stated here can be found there with proofs.

We have a natural decomposition
$$
\GZ(X)=\sum_{k=0}^{\dim X} \GZ_k(X),
$$
where $\GZ_k(X)$ is the elements of dimension $k$; that is, of bidegree $(n-k,n-k)$.
Each generalized cycle has a well-defined Zariski-support.  However the support of $\mu$ can
have strictly larger dimension than the dimension of $\mu$, cf.~Example~\ref{urk}.


Given any analytic variety in $X$ we have the  natural restriction operator
$$
\1_V\colon  \GZ_k(X)\to \GZ_k(X), \quad \mu\mapsto \1_V \mu.
$$
There is a notion of irreducibility and any $\mu\in \GZ_k(X)$ has a 
unique decomposition into irreducible terms. Moreover, $\1_V\mu$ is precisely the sum
of the irreducible components of $\mu$ whose Zariski-supports are contained in $V$. 

If $\gamma$ is a component of a Chern form on $X$, then we have the mapping 
\begin{equation}\label{mulle}
\mu\mapsto \gamma\w \mu
\end{equation}
on $\GZ(X)$.

\smallskip
If $h\colon X\to Y$ is a proper
mapping, then we have a natural mapping $h_*\colon \GZ(X)\to \GZ(Y)$, which is consistent with 
the usual push-forward mapping of cycles. 
One can define $\GZ(Z)$ just as well for a non-smooth reduced analytic space $Z$.  
If $i\colon Z\to X$ is an inclusion, then the image of $i_*$ is precisely the elements
in $\GZ(X)$ that has support on $i_*Z$.   
It is therefore often natural to think of generalized cycles
as purely geometric objects on $X$ and suppress the fact that they formally are currents. 

If  $\mu\in\GZ_k(X)$, then for each point $x\in X$ there is a well-defined integer $\mult_x \mu$, the multiplicity of $\mu$  at $x$. 
If $\mu$ is effective, i.e., a positive current,  it is precisely the Lelong number of $\mu$ at $x$.
It coincides with the usual notion of multiplicity 
if $\mu$ is an analytic cycle.  If $\mu$ is in $\GZ(Z)$ and
$i\colon Z\to X$  is an inclusion,  then $\mult_x \mu=\mult_{i(x)}i_*\mu$.

\begin{ex}\label{urk}
If $X=\Pk^2_{[x_0,x_1,x_2]}$ then $\mu=dd^c\log(|x_1|^2+|x_2|^2)$ is in $\GZ(\Pk^2)$.
It is smooth
except at $p=[1,0,0]$, and $\mult_x\mu=1$ at $x=p$ and $0$ elsewhere. Moreover, $\mu$ is irreducible, 
has dimension $1$, and its Zariski-support is $X$. 
\end{ex}

\smallskip
We say that $\beta$ is a $B$-form on $W$ if it is a component of the form
$c(E)-c(S)c(Q)$, where $0\to S\to E\to Q$ is a short exact sequence
of Hermitian vector bundles on $W$. 
We say that $\mu\sim 0$ in $\GZ_k(X)$ if it is a locally finite sum of currents
of the form $\tau_*(\beta\w \gamma)$,  where $\tau\colon W\to X$ is proper, 
$\beta$  is a $B$-form and
$\gamma$ is a product of components of Chern forms on $W$. 

We let $\mathcal B_k(X)=\GZ_k(X)/\sim$ and $\mathcal B(X)=\oplus_{k=0}^\infty \mathcal B_k(X)$.
It turns out that  $\Z(X)$ is a submodule of $\mathcal B(X)$ as well. The other properties mentioned
above regarding $\GZ(X)$ still hold for $\mathcal B(X)$.
The most important one in this paper is that 
the multiplicity $\mult_x\mu$ of  $\mu\in \GZ_k(X)$ 
only depends on the class of $\mu$ in $\mathcal B_k(X)$.


\begin{lma}\label{nolllma}
If $\gamma$ has positive bidegree, then, cf.~\eqref{mulle}, 
$
\mult_x(\gamma\w \mu)=0.
$
\end{lma}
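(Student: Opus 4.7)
The plan is to reduce to the core situation: $\gamma$ is a smooth component of a Chern form on $X$ of positive bidegree $(j,j)$, and $\mu = \tau_* \gamma'$ for a proper holomorphic $\tau \colon W \to X$ and $\gamma'$ a product of components of Chern forms on $W$. By linearity and the definition of $\GZ(X)$ this is enough. The projection formula then gives
\begin{equation*}
\gamma \w \mu = \tau_*\bigl(\tau^* \gamma \w \gamma'\bigr),
\end{equation*}
and $\tau^* \gamma$ is again a (sum of products of) Chern form components on $W$, so $\gamma \w \mu$ is once more a generalized cycle in $\GZ_{k-j}(X)$, where $k = \dim \mu$.

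Next I would compute $\mult_x(\gamma\w\mu)$ by passing to the blowup $\pi \colon \widetilde X \to X$ of $X$ at the point $x$, with exceptional divisor $E \cong \Pk^{n-1}$. According to the theory in \cite{aeswy1}, the multiplicity of a generalized cycle at $x$ can be read off after blowing up $x$: $\mult_x\nu$ is computable as an intersection-theoretic integral over $E$ of $\pi^*\nu$ against a polynomial in Chern forms of the tautological line bundle on $E$. Since $\pi^*\gamma \w \pi^*\mu$ equals the pullback of $\gamma \w \mu$, and since the smooth factor $\pi^*\gamma$ commutes with the pushforward/restriction operations entering the formula, the whole contribution is controlled by $(\pi^*\gamma)|_E$.

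The decisive observation is geometric: $\pi|_E$ factors as the constant map $E \to \{x\} \hookrightarrow X$, so for \emph{any} smooth form $\alpha$ on $X$ of positive bidegree, $(\pi^*\alpha)|_E = 0$. In particular $(\pi^*\gamma)|_E = 0$, and therefore the integral over $E$ computing $\mult_x(\gamma\w\mu)$ vanishes. This gives $\mult_x(\gamma\w\mu)=0$ at every $x$.

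The main obstacle is a bookkeeping one rather than a conceptual one: one must be careful that the characterization of $\mult_x$ via blowup from \cite{aeswy1} really does allow the smooth factor $\pi^*\gamma$ to be pulled outside the relevant integral and restricted to $E$, even when $\pi^*\mu$ carries nontrivial mass along $E$ (its ``fixed part'' at $x$). Since $\pi^*\gamma$ is smooth on $\widetilde X$, wedging commutes with the current operations involved, and vanishing of its restriction to $E$ forces vanishing of the multiplicity. An alternative, equivalent, route is to use the explicit formula for $\mult_x\tau_*\gamma'$ as an integral over components of $\tau^{-1}(x)$ of a polynomial in Chern forms: inserting the extra factor $\tau^*\gamma$ which vanishes on $\tau^{-1}(x)$ for bidegree reasons kills the integral.
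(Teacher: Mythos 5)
This lemma is quoted in the paper without proof (it is one of the properties imported wholesale from \cite{aeswy1}), so there is no in-text argument to compare against; judged on its own, your proposal is essentially correct, and your ``alternative route'' is in fact the standard proof and the one that fits the paper's formalism. Concretely: after reducing by linearity to $\mu=\tau_*\gamma'$ and writing $\gamma\w\mu=\tau_*(\tau^*\gamma\w\gamma')$, the right tool is the identity $\mult_x\nu\,[x]=M^\xi_{n-\ell}\w\nu=\tau_*\big(M^{\tau^*\xi}_{n-\ell}\w\cdot\big)$, where $\xi$ generates the maximal ideal at $x$ --- exactly the identity the paper itself invokes in the proof of Lemma~\ref{surpuppa2}. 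Since $M^{\tau^*\xi}_{k}$ is a generalized cycle supported on $\tau^{-1}(x)$, it lies in the image of $i_*$ for the inclusion $i\colon\tau^{-1}(x)\to W$, and the projection formula gives $\tau^*\gamma\w i_*\nu=i_*\big(i^*\tau^*\gamma\w\nu\big)=0$ because $\tau\circ i$ is constant and $\gamma$ has positive bidegree. That last step deserves the emphasis you give it: a smooth form whose restriction to a subvariety vanishes does \emph{not} kill an arbitrary current supported there, so it is essential that generalized cycles supported on $V$ are pushforwards from $V$. Your primary route, by contrast, is shakier as written: $\pi^*\mu$ for the blow-up of $X$ at $x$ is not a defined operation in the $\GZ$ framework of \cite{aeswy1} (pullback of generalized cycles is the delicate subject of \cite{Kalm}), so the ``read off $\mult_x$ from $\pi^*\mu$ restricted to the exceptional divisor'' step would need justification; the correct version of that idea is precisely the $M^\xi$ computation, which takes place on $W$ rather than on a blow-up of $X$.
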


Any $\mu$ is in $\GZ_{n-k}(X)$
has a unique decomposition
\begin{equation}\label{siu}
\mu=\sum_j \beta_j[Z_j] + N,
\end{equation}
where $Z_j$ have codimension $k$ and $\mult_x N$ vanishes outside an analytic set
of codimension $\ge k+1$. In case $\mu$ is effective, i.e., the $(k,k)$-current $\mu$ is a positive,
then \eqref{siu} is the Siu decomposition of $\mu$.  For a general $\mu$, see Theorem~\ref{musiu}
below.
If $\mu'$ is another representative of the same class in $\mathcal B_{n-k}(X)$,
then the Lelong current in its decomposition \eqref{siu} is the same whereas the term $N$ may be different.  As already mentioned in
the introduction the Lelong current and $N$ are referred to as the fixed and moving part, respectively, of $\mu$.

\subsection{Generalized Monge-Amp\`ere products} \label{ma-produkt}
Let us assume that $X$ is connected and let $\phi$ be a section, with zero set $Z$,  of 
the Hermitian vector bundle $F\to X$.  
One can recursively define closed currents of order zero,  
\begin{equation}\label{ma}
[dd^c\log|\phi|^2]^0=1, \ \  [dd^c\log|\phi|^2]^k=dd^c\big( \log|\phi|^2 \1_{X\setminus Z} [dd^c \log|\phi|^2]^{k-1}\big), \ k=0,1,2, \ldots 
\end{equation}
For each $k\ge 0$  
$$
M^\phi_{k}:=\1_Z  [dd^c \log|\phi|^2]^{k}
$$
is a closed current of order $0$ of bidegree $(k,k)$ with support on $Z$ so it vanishes 
if $k\le \codim Z$. 
Thus \eqref{ma} is the classical Bedford-Taylor-Demailly product for $k\le \codim Z$. 
The definition for larger $k$ might look artificial, but indeed, e.g.,  \cite[Proposition~4.4]{A3},
\begin{equation}\label{maepsilon}
[dd^c\log|\phi|^2]^k=\lim_{\e\to 0} \big(dd^c\log(|\phi|^2+\e)\big)^k, \quad k=0,1,2,\ldots.
\end{equation}

For future reference we sketch a proof for that this definition makes sense:
If $\phi$ is identically $0$ then $M^\phi=M^\phi_0=\1_Z=1$.
Let us assume that $Z$ has positive codimension. 
 Let 
$
\pi\colon \widetilde X\to X
$
be a smooth modification such that the sheaf generated by the section $\pi^*\phi$ of $\pi^*F\to \widetilde X$ is principal,
and generated by the section $\phi^0$ of a line bundle $\La\to \widetilde X$. 
Then\footnote{Such a smooth modification exists by Hironaka's theorem. 
The argument works just as well if one takes $\pi$ as the normalization of the blow up of $X$ along the 
ideal sheaf defined by $\phi$.}
$$
\pi^*\phi=\phi^0 \phi',
$$
where $\phi'$ is a section of $\La^*\otimes \pi^*F $.
Since  
$
\La\to \pi^* F,  \quad  v\mapsto v\phi',
$
is injective, $\La$ is   a subbundle of $\pi^*F$. If we equip $\La$ with the induced metric, then
$|\phi^0|=|\pi^*\phi|$ and 
\begin{equation}\label{tutt4}
dd^c\log|\pi^*\phi|^2= 
dd^c\log|\phi^0|^2=[D]-c_1(\La)= [D]+s_1(\La)
\end{equation}
by \eqref{plbas}.
In particular,
$$
\1_{X'\setminus |D|} dd^c\log|\pi^*\phi|^2     =s_1(\La).
$$
Let  
\begin{equation}\label{flax1}
\la dd^c\log|\phi|^2\ra^\ell:=\1_{X\setminus Z}[dd^c\log|\phi|^2]^\ell=
\pi_* s_1(\La)^\ell,
\end{equation}
\begin{equation}\label{flax2}
\log|\phi|^2\la dd^c\log|\phi|^2\ra^\ell=\pi_*\big(\log|\pi^*\phi|^2 s_1(\La)^\ell\big),
\end{equation}
\begin{equation}\label{pulka1}
[dd^c\log|\phi|^2]^\ell=\pi_*\big([D]\w s_1(\La)^{\ell-1}+ s_1(\L)^\ell\big)
\end{equation}
and
\begin{equation}\label{pulka2}
M^\phi_\ell=\1_{Z}[dd^c\log|\phi|^2]^\ell=\pi_*\big([D]\w s_1(\La)^{\ell-1}).
\end{equation}
It follows that the currents in \eqref{flax1} and \eqref{flax2} are locally integrable. Moreover,
since  $|D|=\pi^{-1}Z$ ($|D|$ is the Zariski-support of $D$), it follows that  
\begin{equation}\label{spargris}
dd^c(\log|\phi|^2\la dd^c\log|\phi|^2\ra^{\ell-1})=[dd^c\log|\phi|^2]^\ell=M_\ell^\phi+\la dd^c\log|\phi|^2\ra^{\ell}, \quad \ell=1,2,\ldots.
\end{equation}
Thus the recursion \eqref{ma} makes sense and produces precisely the currents
 $\la dd^c\log|\phi|^2\ra^\ell$, $[dd^c\log|\phi|^2]^\ell$ and $M^\phi_\ell$.  
 From \eqref{flax1}, \eqref{pulka1} and \eqref{pulka2} we see that  
they are generalized cycles on $X$. 

We let  $M^\phi=M^\phi_0+M^\phi_1+\cdots$.   
If $\pi\colon \widetilde X\to X$ is any modification, then
\begin{equation}\label{struts22}
\pi_* M^{\pi^*\phi}=M^\phi,
\end{equation}
see \cite{aeswy1}. 
Furthermore,  
if $\hat\phi$ is a section of a Hermitian bundle $\hat F\to X$ such that
$|\hat \phi|\sim |\phi|$ locally on $X$, then $M^{\hat \phi}$ and $M^\phi$ define
the same element in  $\mathcal B(X)$.  In particular, $\hat F$ can be 
$F$ but with another Hermitian metric.

\smallskip  

For a thorough discussion of  regularizations of generalized  
Monge-Amp\`ere products, see, e.g., \cite{LSW}. 
We will need the following variant that, as far as we know, has not appeared before.

\begin{prop}\label{vprop}
Let $\chi(t)$ be a smooth function on $\R$ that is $0$ for $t<1/2$ and $1$ for $t>3/4$ and let
$\chi_\e=\chi(|\varphi|^2/\e)$, where
 $\varphi$ is a section of a vector bundle (tuple of holomorphic functions) with zero set $V$ of positive codimension in $X$.
Then the currents\footnote{The first term on the right hand side of \eqref{charm} vanishes unless $\phi\equiv 0$.}
\begin{equation}\label{charm}
T^{\phi,\e}_V= (1-\chi_\e)\1_Z +\dbar\chi_\e\w \frac{\partial\log|\phi|^2}{2 \pi i}\w \sum_{\ell=0}^\infty \la dd^c\log|\phi|^2\ra^\ell 
\end{equation}
tend to $\1_V M^\phi$ when $\e\to 0$.
\end{prop}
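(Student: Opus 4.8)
The plan is to reduce the statement to a computation upstairs on the modification $\pi\colon \widetilde X\to X$ used to define $M^\phi$, where everything becomes explicit in terms of the divisor $D$ and the line bundle $\La$, and then to pass to the limit $\e\to 0$ using the known integrability of the generalized Monge--Amp\`ere currents. First I would treat the trivial case $\phi\equiv 0$ separately: there $\la dd^c\log|\phi|^2\ra^\ell=0$ for $\ell\ge 1$ and the second term of \eqref{charm} is absent, so $T^{\phi,\e}_V=(1-\chi_\e)\1_Z\to \1_V\1_Z=\1_V M^\phi$ pointwise by dominated convergence since $\chi_\e\to \1_{X\setminus V}$ outside the measure-zero set $V$. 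So assume $Z$ has positive codimension.

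Next I would rewrite the second term of \eqref{charm} as a total derivative. Using $dd^c=(i/2\pi)\ddbar$ and the recursion for the $\la dd^c\log|\phi|^2\ra^\ell$, one has, away from $Z$,
\begin{equation}
\dbar\chi_\e\w\frac{\partial\log|\phi|^2}{2\pi i}\w\sum_{\ell\ge 0}\la dd^c\log|\phi|^2\ra^\ell
=-\frac{1}{2}\, d\Big(\chi_\e\, d^c\log|\phi|^2\w\sum_{\ell\ge 0}\la dd^c\log|\phi|^2\ra^\ell\Big)+\text{(terms with }dd^c\text{)},
\end{equation}
and the key point is that when $\chi_\e$ is replaced by $1$ the bracketed expression is, by \eqref{spargris}, precisely a primitive: $dd^c$ of $\log|\phi|^2\la dd^c\log|\phi|^2\ra^{\ell-1}$ recovers $M^\phi_\ell+\la dd^c\log|\phi|^2\ra^\ell$. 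I would make this rigorous by pulling back to $\widetilde X$ via $\pi$ and using \eqref{struts22}: there $|\pi^*\phi|=|\phi^0|$, $dd^c\log|\phi^0|^2=[D]+s_1(\La)$, and the currents in \eqref{flax1} and \eqref{flax2} become the manifestly integrable forms $\pi_* s_1(\La)^\ell$ and $\pi_*(\log|\phi^0|^2 s_1(\La)^\ell)$. On $\widetilde X$ the cutoff is $\chi_\e=\chi(|\pi^*\varphi|^2/\e)$ and the zero set of $\pi^*\varphi$ is $\pi^{-1}V$; since $\pi$ is proper the push-forward of the limit is the limit of the push-forwards, so it suffices to prove the statement upstairs with $\phi$ replaced by $\phi^0$, $V$ by $\pi^{-1}V$, $Z$ by $|D|$.

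Upstairs the computation is concrete. Writing $u=\log|\phi^0|^2$, which is smooth outside $|D|$ and has logarithmic singularities along $|D|$, and $\omega=s_1(\La)$, a smooth closed form, one has $dd^c u=[D]+\omega$, and
\begin{equation}
T^{\phi^0,\e}_{\pi^{-1}V}=(1-\chi_\e)\1_{|D|}+\dbar\chi_\e\w\frac{\partial u}{2\pi i}\w\sum_{\ell\ge 0}\big([D]+\omega\big)^{\ell}_{\text{reg}},
\end{equation}
where $([D]+\omega)^\ell_{\text{reg}}=\la dd^c u\ra^\ell=\omega^\ell$ away from $|D|$. I would integrate against a test form $\psi$ and split $\dbar\chi_\e\w\partial u=\frac{1}{2}(\ddbar(\chi_\e u)-\chi_\e\ddbar u-u\,\ddbar\chi_\e+\ldots)$; the terms involving $\ddbar u$ produce $\chi_\e$ times $[D]\w\omega^{\ell-1}+\omega^\ell$, which as $\e\to 0$ tends to $\1_{\pi^{-1}V}$ times $M^{\phi^0}_\ell+\la dd^c u\ra^\ell$ on the part of $|D|$ meeting $\pi^{-1}V$ and cancels the corresponding contribution elsewhere, while the pure derivative term $\ddbar(\chi_\e u\,\omega^{\ell-\cdot})$ integrates by parts onto $\psi$ and vanishes in the limit because $\chi_\e u\to 0$ in $L^1_{\loc}$ (the logarithmic singularity is integrable and $\chi_\e$ is supported where $|\pi^*\varphi|^2\gtrsim\e$). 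The term $u\,\ddbar\chi_\e$ is handled the same way after one more integration by parts. The bookkeeping that makes the $\e$-dependent boundary contributions telescope into exactly $\1_V(M^\phi-M^\phi_0)+\,(1-\chi_\e)\1_Z\to\1_V M^\phi$ is the heart of the matter; here I would invoke \eqref{spargris} and the fact, from Section~\ref{gztrams}, that all the currents involved are generalized cycles of order zero so no mass escapes to lower strata.

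\textbf{Main obstacle.} The delicate point is the interchange of $\lim_{\e\to 0}$ with the (infinite) sum over $\ell$ and with integration by parts: one must control the currents $\dbar\chi_\e\w\partial\log|\phi|^2\w\la dd^c\log|\phi|^2\ra^\ell$ uniformly in $\e$, which is exactly where the logarithmic-type integrability from \eqref{flax2} and the order-zero property of the generalized Monge--Amp\`ere products are used. Doing this cleanly will again go through $\pi$, where $\log|\phi^0|^2$ is an honest sum $\sum a_j\log|\tau_j|^2+(\text{smooth})$ and the estimates are classical; I expect the only real work is verifying that the cross terms with $d\chi_\e\w d\chi_\e$ and $d\chi_\e\w[D]$ (the latter vanishing since $[D]$ has support where $u=-\infty$, away from $\operatorname{supp}d\chi_\e$ for $\e$ small unless $V\cap|D|\neq\emptyset$, in which case it contributes precisely to $\1_V$) are accounted for.
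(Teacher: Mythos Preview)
Your overall strategy---Leibniz rule plus passage to the limit using the local integrability established via the modification---is the right one and is exactly what the paper does. But you have over-engineered it and introduced genuine mistakes in the process.

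First, two concrete errors. You write that $\chi_\e u\to 0$ in $L^1_{\loc}$; this is false. Since $\chi_\e\to \1_{X\setminus V}$ pointwise and $V$ has measure zero, dominated convergence gives $\chi_\e u\to u$, not $0$. Similarly, you say the term $\chi_\e\,\ddbar u$ tends to $\1_{\pi^{-1}V}$ times the Monge--Amp\`ere current; in fact it tends to $\1_{X\setminus \pi^{-1}V}$ times it. You have the role of $\chi_\e$ backwards throughout: it is the \emph{difference} between the two pieces of the Leibniz decomposition, not either piece alone, that produces the factor $\1_V$. With the signs and limits you wrote, the argument does not close.

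Second, the detour through $\widetilde X$ and the four-term expansion of $\ddbar(\chi_\e u)$ are unnecessary. The paper stays on $X$ and uses the simpler identity
\[
\dbar\chi_\e\wedge\frac{\partial\log|\phi|^2}{2\pi i}\wedge T
=\dbar\Big(\chi_\e\,\frac{\partial\log|\phi|^2}{2\pi i}\wedge T\Big)
-\chi_\e\,\frac{\dbar\partial\log|\phi|^2}{2\pi i}\wedge T,
\]
where $T=\sum_\ell\la dd^c\log|\phi|^2\ra^\ell$ is $\dbar$-closed. By \eqref{flax2} the current $\partial\log|\phi|^2\wedge T$ is well defined (it is $\partial$ of a locally integrable current), so $\chi_\e\,\partial\log|\phi|^2\wedge T\to\partial\log|\phi|^2\wedge T$ and hence, by \eqref{spargris}, the first term on the right tends to $\sum_{\ell\ge 1}[dd^c\log|\phi|^2]^\ell$. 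The second term tends to $\1_{X\setminus V}\sum_{\ell\ge 1}[dd^c\log|\phi|^2]^\ell$. Their difference is $\1_V\sum_{\ell\ge 1}[dd^c\log|\phi|^2]^\ell$, and since $\la dd^c\log|\phi|^2\ra^\ell$ is locally integrable and $\codim V>0$, this equals $\1_V\sum_{\ell\ge 1}M^\phi_\ell$. Together with $(1-\chi_\e)\1_Z\to \1_V M^\phi_0$ you are done.

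Finally, your ``main obstacle'' about the infinite sum over $\ell$ is a non-issue: the sum is finite, cut off at $\ell=n=\dim X$ for degree reasons.
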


\smallskip\noindent
If $V$ contains $Z$ then $T^{\phi,\e}_V$ are smooth and tend to $M^\phi$.

\begin{proof}
It is clear that $(1-\chi_\e)\1_Z\to \1_V\1_Z=\1_V M^\phi_0$. 
Let 
$$
T=\sum_{\ell=0}^\infty \la dd^c\log|\phi|^2\ra^\ell.
$$
We have 
\begin{equation}\label{sardin}
\dbar\chi_\e\w \partial\log |\phi|^2(2 \pi i)^{-1}i\w T=\dbar\big (\chi_\e  \partial\log|\phi|^2(2 \pi i)^{-1}\w T\big)-
\chi_\e\w \dbar\partial\log|\phi|^2(2 \pi i)^{-1}\w T.
\end{equation}
In view of \eqref{flax2}
$$
\chi_\e  \partial\log|\phi|^2 \w T\to \partial\log|\phi|^2\w T,
$$
and hence the first term on the right hand side of \eqref{sardin}, cf.~\eqref{spargris},  tends to 
$$
\sum_{\ell=1}^\infty [dd^c\log|\phi|^2]^\ell,   
$$ 
whereas the second term tends to 
$$
\1_{X\setminus V} \sum_{\ell=1}^\infty [dd^c\log|\phi|^2]^\ell. 
$$
Now \eqref{charm} follows since $\codim V>0$ and $\la dd^c\log|\phi|^2\ra^\ell$ is locally integrable, so that 
$$
\1_V [dd^c\log|\phi|^2]^\ell=\1_V M^\phi_\ell+ \1_V\la dd^c\log|\phi|^2\ra^\ell=\1_V M_\ell^\phi.
$$
\end{proof}

\subsection{Twisting with a line bundle}\label{twistma}
We keep the notation from the previous subsection. Let  $S\to X$ be a line bundle (with no specified metric)
and assume that $\psi$ is a section of $F\otimes S^*$.   If $s$ is a local non-vanishing 
section of $S$ we let $|\psi|_\circ=|s\psi|$. 
Then $dd^c\log|\psi|_\circ:=dd^c\log|s\psi|$ is independent of the choice of $s$ and hence a global current on $X$.
In this way we define the global currents $[dd^c\log|\psi|^2_\circ]^\ell:=
[dd^c\log|s\psi|^2]^\ell$,  cf.~Remark~\ref{kahler} below, and
$
\mathring M^\psi:= M^{s\psi}.
$

\begin{lma}\label{ringlemma}
If $\pi\colon \widetilde X\to X$ is a modification, then
\begin{equation}\label{struts222}
\pi_*\mathring M^{\pi^*\psi}=\mathring M^\psi.
\end{equation}
The current $\mathring M^\psi$ is an element in $\GZ(X)$.
If $\hat \psi$ is a section of $\hat F\otimes S^*$, where $\hat F\to X$ is another Hermitian vector bundle
and $|\hat\psi|\sim|\psi|$, 
then $\mathring M^{\hat \psi}$ defines the same class in $\mathcal B(X)$.
\end{lma}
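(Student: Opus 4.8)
The plan is to reduce Lemma~\ref{ringlemma} to the already-established properties of $M^\phi$ for genuine sections, by working locally and clearing the twist by $S$. Since all three assertions are local on $X$ (both $\pi_*$ and the class in $\mathcal B(X)$ are defined by local data), I would cover $X$ by open sets $U$ on which $S$ is trivialized by a non-vanishing section $s$; on such a $U$ the section $\psi$ of $F\otimes S^*$ corresponds to the honest section $s\psi$ of $F$, and by the very definition $\mathring M^\psi|_U=M^{s\psi}$. Then I would invoke the results of Section~\ref{ma-produkt}: that $M^{s\psi}$ is a well-defined generalized cycle in $\GZ(U)$ (from \eqref{flax1}, \eqref{pulka1}, \eqref{pulka2}), that $\pi_*M^{\pi^*(s\psi)}=M^{s\psi}$ by \eqref{struts22}, and that replacing $F$ by another Hermitian bundle $\hat F$ with $|\widehat{s\psi}|\sim|s\psi|$ does not change the $\mathcal B$-class.

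The first step is to verify that these local objects patch to a global one, which is immediate from the paragraph preceding the lemma: $dd^c\log|\psi|_\circ$ is independent of the choice of $s$, hence so is $[dd^c\log|\psi|_\circ^2]^\ell$ and $\mathring M^\psi=M^{s\psi}$; two trivializing sections $s,s'$ differ by a non-vanishing holomorphic function $u=s'/s$, and $|s'\psi|=|u|\,|s\psi|$ with $|u|$ smooth and non-vanishing, so $\log|s'\psi|^2=\log|s\psi|^2+\log|u|^2$ with $\log|u|^2$ pluriharmonic; thus all the currents built by the recursion \eqref{ma} agree on overlaps. In particular $\mathring M^\psi$ is globally a generalized cycle since it is locally one in $\GZ(U)$ and the local representatives agree.

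For \eqref{struts222}: given a modification $\pi\colon\widetilde X\to X$, over each $U$ as above with $\widetilde U=\pi^{-1}(U)$ the section $\pi^*s$ trivializes $\pi^*S$, and $\pi^*\psi$ corresponds to $(\pi^*s)(\pi^*\psi)=\pi^*(s\psi)$; hence $\mathring M^{\pi^*\psi}|_{\widetilde U}=M^{\pi^*(s\psi)}$ and by \eqref{struts22} $\pi_*M^{\pi^*(s\psi)}=M^{s\psi}=\mathring M^\psi|_U$. Since $\pi_*$ commutes with restriction to the $U$'s these local identities glue to \eqref{struts222}. For the last assertion: if $\hat\psi$ is a section of $\hat F\otimes S^*$ with $|\hat\psi|\sim|\psi|$, then locally $|s\hat\psi|\sim|s\psi|$, and the final remark of Section~\ref{ma-produkt} (that $M^{\hat\phi}$ and $M^\phi$ define the same $\mathcal B$-class when $|\hat\phi|\sim|\phi|$) gives that $M^{s\hat\psi}$ and $M^{s\psi}$ define the same class in $\mathcal B(U)$; gluing yields equality in $\mathcal B(X)$.

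The only mild obstacle is bookkeeping: checking that the identification $\psi\leftrightarrow s\psi$ is genuinely compatible with all the operations ($dd^c\log|\cdot|^2$, the recursion, $\pi_*$, and the equivalence $\sim$ defining $\mathcal B$), and that the transition functions $|u|$ — smooth, positive, and $\log$-pluriharmonic — do not perturb any of these. None of this requires new ideas; it is a routine localization argument resting entirely on the already-stated properties of $M^\phi$ for untwisted sections, together with independence of $dd^c\log|\psi|_\circ$ from the trivialization, which was recorded just before the statement.
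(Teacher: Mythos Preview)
Your argument for \eqref{struts222} is the same as the paper's: both reduce locally to $M^{s\psi}$ and invoke \eqref{struts22}.

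For the second and third assertions your route differs from the paper's, and there is a small but genuine gap. You argue that $\mathring M^\psi\in\GZ(X)$ because each $\mathring M^\psi|_U=M^{s\psi}$ lies in $\GZ(U)$ and these agree on overlaps. But membership in $\GZ(X)$ means admitting a representation as a locally finite sum of $\tau_*\gamma$ with $\tau\colon W\to X$ proper; having such representations only over each $U$ (with $\tau$ proper into $U$) does not automatically yield a global one. The paper itself later distinguishes ``locally a generalized cycle'' from being in $\GZ(X)$ (see the statement of Theorem~\ref{thmtre}), so this sheaf-type step cannot be taken for granted. The same issue affects your gluing for $\mathcal B$-equivalence: the result you cite from Section~\ref{ma-produkt} applies to global sections $\phi,\hat\phi$, whereas $s\psi$ and $s\hat\psi$ live only over $U$.

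The paper avoids this entirely by observing that the ideal sheaf determined by $\psi$ (locally generated by the components of $s\psi$, hence independent of $s$ up to units) is globally defined on $X$. One can therefore choose a \emph{single} modification $\pi\colon\widetilde X\to X$ making $\pi^*\psi$ principal, $\pi^*\psi=\psi^0\psi'$ with $\psi'$ non-vanishing. This gives the global formula
\[
\mathring M^\psi=\pi_*\big([D]\wedge s(\La)\big),
\]
which exhibits $\mathring M^\psi\in\GZ(X)$ directly, and simultaneously handles the third claim: since $|\hat\psi|\sim|\psi|$ forces $\pi^*\hat\psi=\psi^0\hat\psi'$ with the same $\psi^0$, one gets $\mathring M^{\hat\psi}=\pi_*([D]\wedge\hat s(\La))$, where only the metric on $\La$ has changed, and the $\mathcal B$-equivalence is then immediate. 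Your local reduction is sound as far as it goes; to complete it you need either this global modification (which makes the gluing moot) or an independent proof that $U\mapsto\GZ(U)$ and $U\mapsto\mathcal B(U)$ are sheaves.
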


Notice that $\pi^*\psi$ is a section of $\pi^*F\otimes \pi^*S^*$; we define  
$\mathring M^{\pi^*\psi}$ by suppressing 
$(\pi^*S)^*$. 

\begin{proof}
Since locally  $\mathring M^\psi=M^{s \psi}$, where $s$ is a local non-vanishing section of $S$,
by \eqref{struts22}
$$
\pi_* \mathring M^{\pi^*\psi}=\pi_* M^{\pi^*{s\psi}}=M^{s\psi}=\mathring M^\psi,
$$
and thus \eqref{struts222} holds.

We now choose\footnote{Since the case $\psi\equiv 0$ is trivial,  we may assume that $\psi$ is not vanishing identically.}
$\pi\colon \widetilde X\to X$ such that $\pi^* \psi$ is principal, as in Section~\ref{ma-produkt}. Then 
$\pi^*\psi=\psi^0 \psi'$, where $\psi'$ is a non-vanishing section of $\pi^*F\otimes\L^*\otimes \pi^*S^*$.
Then $\pi^*(s\psi)=(\pi^*s)\psi^0 \psi'$. As in Section~\ref{ma-produkt} we see that 
$s_1(\L)=dd^c\log|(\pi^*s)\psi'|^2=dd^c\log|\psi'|_\circ$, where the $\circ$ means that
$\pi^*S^*$ is suppressed, so that 
\begin{equation}\label{snaps}
\mathring M^{\pi^*\psi}=[D]\w s(\L).
\end{equation}
Hence 
\begin{equation}\label{struts2222}
\mathring M^\psi =\pi_*\mathring M^{\pi^*\psi}=\pi_*([D]\w s(\L))
\end{equation}
is an element in $\GZ(X)$. 

If  $|\hat\psi|\sim|\psi|$, then 
$\pi^*\hat\psi=\psi^0 \hat \psi'$ and therefore, cf.~\eqref{snaps}, 
$\mathring M^{\pi^*\hat \psi}=[D]\w \hat s(\L)$, where
$\hat s(\L)$ denotes the Segre form of $\L$ with respect to the metric induced
by $\hat F$.  Thus $\mathring M^{\pi^*\hat \psi}$ and $\mathring M^{\pi^*\psi}$
are in the same class in $\mathcal B(\widetilde X)$, and so the last part follows. 
\end{proof}

We have the following variant of Proposition~\ref{vprop}.
Let $\chi_\e$ be a sequence ad in this proposition that tends to $\1_{X\setminus V}$. 

\begin{prop}\label{circvprop}
Assume that $\phi$ is a section of $F\otimes S^*$ and that $\alpha$ is a non-vanishing  section of 
$H\otimes S^*$ for some Hermitian vector bundle $H\to X$.   
Then the currents
\begin{equation}\label{charm1}
\mathring T^{\phi,\e}_V= (1-\chi_\e)\1_Z +\dbar\chi_\e\w \frac{\partial\log(|\phi|/|\alpha|)^2}{2\pi i}\w \sum_{\ell=0}^\infty \la dd^c\log|\phi|_\circ^2\ra^\ell 
\end{equation}
tend to $\1_V \mathring M^\phi$ when $\e\to 0$.
\end{prop}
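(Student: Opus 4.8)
The plan is to reduce everything to the ungraded situation of Proposition~\ref{vprop} by trivializing the line bundle $S$ locally. Since the statement is local (both sides are currents on $X$ and the assertion is convergence in the sense of currents), I would fix a point $x_0\in X$ and a small \nbh $U$ on which $S$ admits a non-vanishing section $s$. On $U$ put $\phi_s=s\phi$, a section of $F$, and $\alpha_s=s\alpha$, a non-vanishing section of $H$. Then $|\phi_s|=|\phi|_\circ$ on $U$ by definition, so $\la dd^c\log|\phi_s|^2\ra^\ell=\la dd^c\log|\phi|_\circ^2\ra^\ell$ and $\mathring M^\phi=M^{\phi_s}$ on $U$. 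Moreover
\[
\partial\log(|\phi|/|\alpha|)^2=\partial\log\frac{|\phi_s|^2}{|\alpha_s|^2}=\partial\log|\phi_s|^2-\partial\log|\alpha_s|^2,
\]
so on $U$ the current $\mathring T^{\phi,\e}_V$ equals
\[
T^{\phi_s,\e}_V \;-\; \dbar\chi_\e\w\frac{\partial\log|\alpha_s|^2}{2\pi i}\w\sum_{\ell=0}^\infty\la dd^c\log|\phi_s|^2\ra^\ell,
\]
where $T^{\phi_s,\e}_V$ is exactly the current of Proposition~\ref{vprop} applied to the section $\phi_s$ of $F$ (note $Z$ is the common zero set of $\phi$ and $\phi_s$). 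By that proposition the first term tends to $\1_V M^{\phi_s}=\1_V\mathring M^\phi$ on $U$, so it remains to show the correction term tends to $0$.

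For the correction term I would argue as in the proof of Proposition~\ref{vprop}. Since $\alpha_s$ is non-vanishing, $\log|\alpha_s|^2$ is smooth on $U$, hence $\partial\log|\alpha_s|^2$ is a smooth $(1,0)$-form and
\[
\dbar\chi_\e\w\frac{\partial\log|\alpha_s|^2}{2\pi i}=\dbar\Bigl(\chi_\e\,\frac{\partial\log|\alpha_s|^2}{2\pi i}\Bigr)-\chi_\e\,\frac{\dbar\partial\log|\alpha_s|^2}{2\pi i},
\]
and since $\chi_\e\,\partial\log|\alpha_s|^2\to \1_{X\setminus V}\,\partial\log|\alpha_s|^2$ and the sum $\sum_\ell\la dd^c\log|\phi|_\circ^2\ra^\ell$ is locally integrable (by \eqref{flax1}, after passing to the modification $\pi$ in Section~\ref{ma-produkt}), wedging with the smooth form $\dbar\partial\log|\alpha_s|^2/(2\pi i)$ or taking $\dbar$ of a convergent family of currents passes to the limit; the resulting limit is
\[
\dbar\Bigl(\1_{X\setminus V}\frac{\partial\log|\alpha_s|^2}{2\pi i}\w T\Bigr)-\1_{X\setminus V}\frac{\dbar\partial\log|\alpha_s|^2}{2\pi i}\w T=\1_{X\setminus V}\,dd^c\log|\alpha_s|^2\w T-\1_{X\setminus V}\,dd^c\log|\alpha_s|^2\w T=0
\]
on $X\setminus V$, and since $\dbar\chi_\e$ is supported where $|\varphi|^2\sim\e$, no mass concentrates on $V$ either; more precisely the whole family is supported off a fixed \nbh of nothing — one checks directly that the mass of $\dbar\chi_\e\w(\partial\log|\alpha_s|^2)\w\la dd^c\log|\phi|_\circ^2\ra^\ell$ on any compact set tends to $0$ because $\la dd^c\log|\phi|_\circ^2\ra^\ell$ puts no mass on $V$ (it is locally integrable and $\codim V>0$) and $\chi_\e\to\1_{X\setminus V}$. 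Hence the correction term tends to $0$ and $\mathring T^{\phi,\e}_V\to\1_V\mathring M^\phi$ on $U$; since $U$ was an arbitrary trivializing \nbh and the limit is independent of the choice of $s$, the convergence holds globally on $X$.

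The main obstacle is the bookkeeping around the term $\dbar\chi_\e\w\partial\log|\alpha_s|^2\w T$: one must be sure that, although $T=\sum_\ell\la dd^c\log|\phi|_\circ^2\ra^\ell$ is singular along $Z$, wedging it with the \emph{smooth} form $\partial\log|\alpha_s|^2$ and then against $\dbar\chi_\e$ and letting $\e\to0$ genuinely gives $0$ and not some residual current on $V$. This is handled exactly as the corresponding step in Proposition~\ref{vprop} (the identity \eqref{sardin} there), the key input being local integrability of $\la dd^c\log|\phi|^2\ra^\ell$ from \eqref{flax1}–\eqref{flax2} and the fact that $\codim V>0$, so that $\1_V\la dd^c\log|\phi|_\circ^2\ra^\ell=0$. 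A secondary point, which is routine, is checking that the local reduction is independent of the trivializing section $s$ — but this is immediate since $\mathring T^{\phi,\e}_V$ as written in \eqref{charm1} is manifestly defined intrinsically (all the $S$-dependence cancels in $|\phi|/|\alpha|$ and in $|\phi|_\circ$), so the globally defined family $\mathring T^{\phi,\e}_V$ agrees on overlaps with the locally constructed one.
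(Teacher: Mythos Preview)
Your proof is correct and follows essentially the same route as the paper: localize by a non-vanishing section $s$ of $S$, write $\mathring T^{\phi,\e}_V=T^{s\phi,\e}_V-(\text{correction})$, invoke Proposition~\ref{vprop} for the first term, and kill the correction by the Leibniz identity $\dbar\chi_\e\w\partial\log|s\alpha|^2\w T=\dbar(\chi_\e\,\partial\log|s\alpha|^2\w T)-\chi_\e\,\dbar\partial\log|s\alpha|^2\w T$, using that $T$ is locally integrable and $\dbar$-closed so both pieces converge to $\dbar\partial\log|s\alpha|^2\w T$. One minor cleanup: the cancellation already yields $0$ globally as a current, since $\1_V T=0$ (local integrability of $T$ and $\codim V>0$) forces $\1_{X\setminus V}\,\partial\log|s\alpha|^2\w T=\partial\log|s\alpha|^2\w T$; your separate discussion of ``no mass concentrates on $V$'' is therefore unnecessary.
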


Here $|\phi|/|\alpha|$ is the  global function defined locally as $|s\phi|/|s\alpha|$, where $s$ is any local non-vanishing section of $S^*$. 

\begin{proof}
Given a local section $s$ we have, with the notation in Proposition~\ref{vprop}, that 
\begin{equation}\label{buhu}
2\pi i \mathring T^{\phi,\e}_V= 2\pi i T^{s\phi,\e}_V -\dbar\chi_\e\w \partial\log|s\alpha|^2\w \sum_{\ell=0}^\infty \la dd^c\log|\phi|_\circ^2\ra^\ell.
\end{equation}
Since $\partial\log|s\alpha|^2$ is smooth, letting $T$ denote the last sum,  the last term in 
\eqref{buhu} is equal to 
$$
\dbar \big( \chi_\e \partial\log|s\alpha|^2\w T\big)- \chi_\e \dbar\partial\log|s\alpha|^2\w T
$$
which tends to $\dbar\partial\log|s\alpha|^2\w T-\dbar\partial\log|s\alpha|^2\w T=0$, since
$V$ has positive codimension so that $\1_V T=0$.  By Proposition~\ref{vprop} thus
$\mathring T^{\phi,\e}_V=T^{s\phi,\e}_V+o(1)\to \1_V M^{s\phi}=\1_V\mathring M^\phi$. 
\end{proof}

\begin{remark}\label{kahler}
Assume that we have a (strictly positive) Hermitian metric on $S^*$ with metric form $\omega$.
Then $\omega=dd^c\log|s|^2$ for any non-vanishing local section of $S$.  
Now $|\psi|$ has a global meaning, $|s\psi|^2=|s|^2|\psi|^2$, and 
$dd^c\log|\psi|^2_\circ=dd^c\log|s\psi|^2 
=dd^c\log|\psi|^2+\omega.$
Thus
\begin{equation}\label{mortal}
dd^c\log|\psi|^2_\circ=dd^c u+\omega, \quad u=\log|\psi|^2.
\end{equation}
If we assume that $E$ is a trivial bundle with a trivial metric, then $dd^c\log|\psi|^2_\circ\ge 0$ and 
by \eqref{mortal} therefore  $u$ is quasi-psh with respect to $\omega$.
The currents $[dd^c u+\omega]^k$ and their residues $\1_Z[dd^c u+\omega]^k$
were introduced for arbitrary $k$ and studied in \cite{ABW}, and further in, e.g., \cite{Bl}.  
Here $u$ can be any $\omega$-psh function with analytic singularities.  Analogues
for other classes of $\omega$-psh functions are studied in \cite{AWW}. 
 \end{remark}

\subsection{Regular embeddings}\label{regem}
Let $g$ be a section of $F\to X$ and let $\J$ be the ideal sheaf generated by $g$. 
We have a non-reduced subspace $\iota\colon Z_\J\to X$ with structure sheaf
$\Ok_{Z_\J}=X_\Ok/\J$.
If the zero set of $\J$ has codimension $\kappa$, and in addition 
$\J$ is locally 
generated by $\kappa$ holomorphic functions, then one says that $\iota$ is a regular embedding. In this case,
$Z_\J$ has a well-defined normal bundle $\No$ over $Z$
and $g$ defines a canonical embedding of $\No$ in $F$.  
If we equip $\No$ with the induced metric, then we have a
well-defined Segre form $s(\No)$ over $Z$. 
Let $[Z_\J]$ denote the Lelong current of the fundamental class of $Z_\J$. 
Then $[Z_\J]=\sum_j a_j [Z_j]$, where $Z_j$ are the irreducible components
of $Z$ and $a_j$ are positive integers.
We  have the generalization
\begin{equation}\label{packad}
M^g=s(\No)\w [Z_\J],
\end{equation}
of \eqref{mg}, see \cite[Proposition~1.5]{aeswy1}. 

If $\psi$ is a section of $F\otimes  S^*$ as in Section~\ref{twistma} that defines a regular embedding, then we have an embedding
$\No\otimes S\to F$ obtained from the embedding $\No\to F\otimes S^*$ induced by $\psi$. Now
\begin{equation}\label{twistreg}
\mathring M^\psi=s(\No\otimes S)\w [Z_\J].
\end{equation}
In fact, if $s$ is a local non-vanishing section of $S$, then  
by  \eqref{packad},   
$\mathring M^\psi=M^{s\psi}=s(\No\otimes S)\w [Z_\J]$, and so \eqref{twistreg} follows.

\subsection{Rank of a holomorphic mapping}\label{kattrank}
Assume that  $W$ is  irreducible and $f\colon W\to Z$ is any holomorphic  mapping.
Then the rank of $f$ at $y$, 
$\dim W-\dim f^{-1}(f(y))$, is lower semi-continuous on $W$ and its maximum, $\rank f$, is
attained on a dense open subset of $W_{reg}$, see, e.g., \cite[II, Section 8.1]{Dem}.
We have, \cite[Corollary II.8.6]{Dem},

\begin{prop}\label{katt}
If $f$ is surjective, then $\rank f=\dim Z$.
\end{prop}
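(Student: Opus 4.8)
The plan is to reduce the statement to a standard dimension count for fibers of a surjective holomorphic map, using the semicontinuity of rank recalled just above. Recall that $\rank f := \max_{y\in W} \big(\dim W - \dim f^{-1}(f(y))\big)$, and that this maximum is attained on a dense open subset $U\subseteq W_{reg}$. The goal is to show $\rank f=\dim Z$ when $f$ is surjective.

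First I would prove the inequality $\rank f\le \dim Z$. For this it suffices to note that for any $y\in W$ the fiber $f^{-1}(f(y))$ has dimension at least $\dim W-\dim Z$. This is the classical statement that the generic fiber dimension of a dominant morphism between irreducible varieties equals $\dim W-\dim Z$, and every fiber has dimension at least this value; equivalently, one may pass to a point $y$ lying over a smooth point of $Z$ and in the smooth locus of $f$, apply the rank theorem locally, and observe that the image is then (locally) open in $Z$, forcing $\dim f^{-1}(f(y))\ge \dim W-\dim Z$. Hence $\dim W-\dim f^{-1}(f(y))\le \dim Z$ for all $y$, and taking the maximum gives $\rank f\le\dim Z$.

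Next I would prove the reverse inequality $\rank f\ge \dim Z$. Let $r=\rank f$, attained on the dense open set $U$. Then $\dim f^{-1}(f(y))=\dim W-r$ for $y\in U$. Stratify $W$ by the dimension of the fiber through each point: the locus where the fiber has dimension $\ge \dim W-r+1$ is a proper analytic subset $W'\subsetneq W$, by semicontinuity, so $\dim W'<\dim W$. Now $f(W')$ is a constructible (in the analytic category, a countable union of analytic) subset of $Z$ of dimension $\le\dim W'<\dim W$. On the other hand, over each point of $Z\setminus f(W')$ the fiber has dimension exactly $\dim W-r$, so a standard fiber-dimension estimate gives $\dim(Z\setminus f(W'))\le \dim W-(\dim W-r)=r$; combined with surjectivity, $\dim Z=\dim f(W)=\max\big(\dim f(W'),\dim(Z\setminus f(W'))\big)\le \max(\dim W', r)$. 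Since $\dim W'$ may a priori be large, I would instead argue directly: pick $y\in U$ over which the fiber has minimal dimension; the restriction of $f$ to a suitable slice transverse to $f^{-1}(f(y))$ is finite-to-one near $y$ onto its image, which has dimension $r$, and this image is open in a subvariety of $Z$ through $f(y)$; surjectivity of $f$ together with irreducibility of $Z$ forces that subvariety to be all of $Z$, whence $r\ge\dim Z$.

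The main obstacle I expect is making the second inequality rigorous without circularity: one wants to avoid simply invoking "generic fiber dimension $=\dim W-\dim Z$," which is essentially the statement being proved. The clean route is to use the rank theorem (constant-rank normal form) on the open set $U$: at a point $y\in U$ with $f$ of maximal rank $r$, after shrinking, $f$ looks like a submersion onto an $r$-dimensional polydisc composed with an inclusion, so the local image is an $r$-dimensional complex submanifold; its closure in $Z$ is then an $r$-dimensional subvariety, and surjectivity of $f$ plus the irreducibility of $Z$ (so that $Z$ has no proper subvariety of full dimension) force $r=\dim Z$. This is exactly the content of \cite[Corollary II.8.6]{Dem}, so in the write-up I would cite it and sketch only the local-normal-form argument, since Proposition~\ref{katt} here is stated precisely to record that reference.
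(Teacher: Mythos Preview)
The paper gives no proof of this proposition at all: it is stated immediately after the sentence ``We have, \cite[Corollary~II.8.6]{Dem},'' and is simply a record of that reference. You correctly anticipated this in your final paragraph.

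Your sketch goes well beyond what the paper does. The inequality $\rank f\le\dim Z$ is fine, resting on the standard lower bound for fiber dimensions. For the reverse inequality your first stratification attempt is, as you note yourself, inconclusive (the bound $\dim f(W')\le\dim W'$ does not help since $\dim W'$ can be as large as $\dim W-1$). Your second attempt via the constant-rank normal form is the right idea but the clause ``surjectivity of $f$ together with irreducibility of $Z$ forces that subvariety to be all of $Z$'' is not justified as written: the transverse slice only produces an $r$-dimensional germ in $Z$, and one still needs an argument (e.g.\ that the image of the open constant-rank locus is locally an $r$-dimensional submanifold everywhere, hence cannot cover $Z$ if $r<\dim Z$, combined with an induction on $\dim W$ to handle the bad locus) to reach the conclusion. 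Since you end by deferring to Demailly's corollary anyway, this matches the paper's treatment exactly.
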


\vspace{.5cm}

\section{Extensions of subbundles}\label{extsub}

If $g\colon E\to F$ is a morphism on $X$,   then outside an analytic subvariety $Z_0$ of positive codimension
$g$ has constant and optimal rank, and thus    
$\Im g$ and $\Ker g$ are subbundles of $F$ and $E$, respectively, in $X\setminus Z_0$ (recall that $X$ is always
assumed to be connected).

\begin{lma}\label{utvid}
Let $S=\oplus_{j=1}^rS_j$ be a direct sum of line bundles $S_j\to X$.
If $g\colon E\to S$ is a morphism that has optimal rank  in $X\setminus Z_0$, 
then there is a modification $\pi\colon \widetilde X\to X$
such that $\Ker \pi^* g$ has an extension across $\pi^{-1} Z_0$ as a holomorphic subbundle of $\pi^*E$.
\end{lma}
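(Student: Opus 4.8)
The plan is to reduce to the case where the "Plücker-type" section associated to $g$ becomes principal after a modification, and then read off the kernel from there. First I would pass to the exterior powers: since $g\colon E\to S$ has optimal rank $r'$ on $X\setminus Z_0$ (where $r'=\min(\rank E,r)$, and of course $r'=r$ if $\rank E\ge r$), the induced morphism $\Lambda^{r'}g\colon \Lambda^{r'}E\to \Lambda^{r'}S$ is a section of the bundle $\Hom(\Lambda^{r'}E,\Lambda^{r'}S)$ that is nonvanishing precisely on $X\setminus Z_0$. Because $S=\oplus S_j$ is a direct sum of line bundles, $\Lambda^{r'}S$ is itself a direct sum of line bundles $\oplus_{|I|=r'}S_I$, $S_I=\otimes_{j\in I}S_j$, so $\Lambda^{r'}g$ is really a tuple of sections of the line bundles $\Hom(\Lambda^{r'}E,S_I)$; equivalently, twisting, a tuple of sections of a single bundle of the form $(\Lambda^{r'}E)^*\otimes S_I$.

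Next I would apply the resolution of singularities / blow-up used already in Section~\ref{ma-produkt}: choose a modification $\pi\colon\widetilde X\to X$ so that the ideal sheaf generated by (the components of) $\pi^*(\Lambda^{r'}g)$ becomes principal. Concretely, on $\widetilde X$ we may write $\pi^*(\Lambda^{r'}g)=f^0\cdot h$, where $f^0$ is a section of a line bundle $\La\to\widetilde X$ and $h$ is a nonvanishing morphism $\Lambda^{r'}(\pi^*E)\to \La\otimes\Lambda^{r'}(\pi^*S)$. I would then argue that the line $\Ker(\Lambda^{r'}(\pi^*g))^\perp$, or rather the decomposable $r'$-vector picked out by $h$, varies holomorphically even across $\pi^{-1}Z_0$: over $\widetilde X\setminus\pi^{-1}Z_0$ the subspace $\Ker(\pi^*g)_x\subset(\pi^*E)_x$ is the annihilator of the line in $\Lambda^{r'}(\pi^*E)^*_x$ (suitably interpreted via $h$), and $h$ being everywhere nonvanishing this assignment extends holomorphically. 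So the subbundle of $\pi^*E$ of corank $r'$ cut out by $h$ is the desired extension of $\Ker\pi^*g$.

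The main obstacle I expect is the passage from "the decomposable section $h$ is nonvanishing" to "its kernel is a genuine holomorphic subbundle of constant rank $\rank E - r'$ across all of $\widetilde X$". The point is that a nonvanishing section of $\Lambda^{r'}E^*$ need not be decomposable, and even a decomposable one must be checked to have constant-dimensional kernel. I would handle this by noting that $h$ is, by construction, generically decomposable (it agrees with $\Lambda^{r'}(\pi^*g)/f^0$, which is decomposable wherever $g$ has its optimal rank — it is literally an exterior power of a matrix of rank $r'$), and decomposability is a closed condition (the Plücker relations), hence $h$ is decomposable everywhere; a decomposable nonzero $r'$-vector determines an $r'$-dimensional subspace depending holomorphically on it, so its kernel (in the dual picture) is a subbundle of constant corank. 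One may also need a further composition of blow-ups, or an appeal to flattening/Hironaka as in the footnote to Section~\ref{ma-produkt}, to guarantee the ideal is principal; I would invoke that in the same way it is invoked there.
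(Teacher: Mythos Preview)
Your approach via the Pl\"ucker embedding is genuinely different from the paper's and essentially sound, though two points need care. First, a slip: the optimal rank of $g$ is its \emph{generic} rank $\rho$, not $\min(\rank E,r)$ in general; your argument only uses the generic rank, so just write $\rho$ throughout. Second, $\Lambda^\rho S$ is not a line bundle when $\rho<r$, so after principalizing, $h$ is a nonvanishing section of $\La^{-1}\otimes(\Lambda^\rho E)^*\otimes\Lambda^\rho S$, not directly a $\rho$-covector on $\pi^*E$. One more closedness step is needed: where $g$ has rank $\rho$ the linear map $\Lambda^\rho g$ has rank exactly $1$ (its image is the line $\Lambda^\rho(\Im g)$), so generically $h$ is a pure tensor $\xi^*\otimes\eta$; rank $\le 1$ is Zariski-closed, hence $h$ has rank $1$ everywhere, and the line $[\xi^*]\subset\Lambda^\rho(\pi^*E)^*$ then varies holomorphically. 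Your decomposability argument applies to $\xi^*$ and yields the extension of $\Ker\pi^*g$.

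The paper argues more directly: it principalizes the components $g_j\colon E\to S_j$ one at a time. After modifying so that the first nontrivial component satisfies $\pi_1^*g_{i_1}=g_1^0 g_1'$ with $g_1'$ nonvanishing, $N_1=\Ker g_1'$ is a corank-$1$ subbundle of $\pi_1^*E$; then restrict the remaining $g_j$ to $N_1$, principalize the first one that is nontrivial there to get $N_2\subset N_1$ of corank $2$, and iterate $\rho$ times. This is more elementary --- no Grassmannian, no Pl\"ucker relations --- and makes visible use of the hypothesis that $S$ is a sum of line bundles, whereas your argument does not really need it (principalization works for sections of any vector bundle). The paper's iterative route also immediately produces a local holomorphic frame adapted to the extended kernel, as exploited in the remark at the end of Section~\ref{extsub}, at the cost of composing $\rho$ modifications rather than a single one.
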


Since $\Im g^*=(\Ker g)^\perp$ the
lemma can be rephrased:  If $g^*\colon S^*\to E^*$ has optimal rank in $X\setminus Z_0$, then
the pullback to $\widetilde X\setminus\pi^{-1}Z_0$ of the subbundle $\Im g^*$ has an extension to $\widetilde X$.

\begin{proof}
Let us assume that the optimal rank is $\rho$. 
Let $g_j\colon E\to S_j$, $j=1,2,\ldots$ and let
 $i_1$ be the first index such that  
$g_{i_1}$ is not identically $0$. 
Let $\pi_1\colon X_1\to X$ be a modification such that $\pi_1^* g_{i_1}=g_1^0 g_1'$, where $g_1^0$ is a section
of a line bundle $L_1\to X_1$ and $g_1'$ is a non-vanishing section of $\pi_1^* E\otimes L_1^*$. Then
$N_1:=\Ker g_1'$ is a subbundle of $\pi_1^*E$ of codimension $1$ over $X_1$.
Let now $i_2>i_1$ be the first index so that 
$\pi_1^*g_{i_2}|_{N_1}\colon N_1\to S_{i_2}$ does not vanishing identically.
Then there is a a modification $\pi_2\colon X_2\to X_1$ such that $\pi_2^*\pi_1^*g_{i_2}= g_2^0 g_2'$, where $g_2'$ is non-vanishing. Hence $N_2:=\Ker g_2'$ is a subbundle of $\pi^*_2 \pi^*_1E$ of codimension $2$ over $X_2$.  Proceeding in this way 
we end up with a subbundle $N_\rho $ of $\pi^*E$ over $\tilde X=X_\rho$,
where 
$\pi=\pi_1\circ \cdots \pi_\rho\colon \widetilde X\to X$.  In the Zariski-open subset of $\widetilde X$
where $\pi$ is a biholomorphism, $N_\rho=\cap_j \Ker \pi^* g_j=\Ker \pi^*g$ and hence $N_\rho$ is 
the desired extension to $\widetilde X$. 
\end{proof}

\begin{prop}
Assume that $E,F$ are Hermitian bundles and $g\colon E\to F$  has optimal rank in $X\setminus Z_0$.  
Then the natural extensions  from $X\setminus Z_0$ to $X$ of 
$s(E/\Ker g)$ and $s(\Im g)$ as well as of $c(E/\Ker g)$ and $c(\Im g)$ are locally integrable in $X$.
\end{prop}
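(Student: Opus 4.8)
The plan is to reduce everything to the line-bundle situation treated in Section~\ref{ma-produkt} via a single modification. First I would apply Lemma~\ref{utvid} (in its rephrased form) to $g^*\colon F^*\to E^*$: since $g$ has optimal rank $\rho$ in $X\setminus Z_0$, so does $g^*$, and hence there is a modification $\pi\colon\widetilde X\to X$, which I may take to be biholomorphic over $X\setminus Z_0$, such that the subbundle $\Im\pi^*g^*=(\Ker\pi^*g)^\perp$ of $\pi^*E^*$, defined over $\widetilde X\setminus\pi^{-1}Z_0$, extends to a holomorphic subbundle $N^\perp$ of $\pi^*E^*$ over all of $\widetilde X$; equivalently $\Ker\pi^*g$ extends to a holomorphic subbundle $N\subset\pi^*E$. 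Dually, after possibly composing with a further modification (again by Lemma~\ref{utvid} applied to $\pi^*g$ itself, since $\Im\pi^*g$ is the image of the morphism $\pi^*g$ whose kernel is now a genuine subbundle), I may assume that $\Im\pi^*g$ also extends to a holomorphic subbundle $I$ of $\pi^*F$ over $\widetilde X$. Equip $N$, $I$, and the quotients with the metrics induced from $\pi^*E$ and $\pi^*F$.

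Next I would identify the generically defined forms with pushforwards of smooth forms. Over $X\setminus Z_0$ we have the short exact sequences $0\to\Ker g\to E\to E/\Ker g\to 0$ and the isomorphism $E/\Ker g\simeq\Im g$; pulling back, over $\widetilde X\setminus\pi^{-1}Z_0$ the bundle $\pi^*(E/\Ker g)$ agrees with $\pi^*E/N$, and $\pi^*(\Im g)$ with $I$. Since $\pi^*E/N$ and $I$ are honest Hermitian vector bundles on the smooth manifold $\widetilde X$, the forms $s(\pi^*E/N)$, $s(I)$, $c(\pi^*E/N)$, $c(I)$ are smooth and closed everywhere on $\widetilde X$, by Section~\ref{prel2}. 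Because $\pi$ is proper and a biholomorphism over the complement of the codimension-$\ge1$ set $\pi^{-1}Z_0$, the current $\pi_*s(\pi^*E/N)$ is locally integrable on $X$ and equals the trivial (zero-mass-on-$Z_0$) extension of $s(E/\Ker g)$; indeed $\pi$ restricted to the smooth locus is an isomorphism and the pushforward of a smooth form under a modification is locally integrable and closed. The same argument applies verbatim to $s(I)$, $c(\pi^*E/N)$ and $c(I)$, giving that $\1_{X\setminus Z_0}s(\Im g)$, $\1_{X\setminus Z_0}c(E/\Ker g)$ and $\1_{X\setminus Z_0}c(\Im g)$ are locally integrable (and closed of order zero).

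It remains to check that these trivial extensions across $Z_0$ coincide with the naive "natural extensions" — i.e.\ that the pushforward really does not charge $\pi^{-1}Z_0$. For this I would use that the smooth forms $s(\pi^*E/N)$ etc.\ have bounded local coefficients on $\widetilde X$ while $\pi^{-1}Z_0$ has positive codimension, so no extra mass can sit there; more precisely, the direct image under a modification of a smooth closed form is a closed current of order zero, and two currents of order zero that agree off a set of positive codimension and have no component supported there must coincide. I expect this last point — verifying that the pushed-forward current is genuinely the locally integrable extension and not the extension plus a current supported on $Z_0$ — to be the only real subtlety, and it is handled exactly as in the computations \eqref{flax1}--\eqref{pulka2}: the relevant forms are smooth up on $\widetilde X$, so their pushforwards are automatically the principal-value/trivial extensions. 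The main obstacle is thus not analytic but organizational: arranging via Lemma~\ref{utvid} a \emph{single} modification on which \emph{both} $\Ker g$ and $\Im g$ extend simultaneously as subbundles, which I would do by applying the lemma twice in succession (first to $g^*$, then to the pullback of $g$, whose kernel has by then become a subbundle).
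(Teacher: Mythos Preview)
Your approach is essentially the paper's: pass to a modification on which $\Ker g$ and $\Im g$ extend as honest subbundles, and push forward the resulting smooth Segre/Chern forms. Two small points. First, Lemma~\ref{utvid} requires the target bundle to be a direct sum of line bundles, so you must work \emph{locally} on $X$ where both $E$ and $F$ are trivial (the paper does exactly this); your invocation of the lemma for $g^*\colon F^*\to E^*$ and then for $\pi^*g$ is only justified after this localization, and the clause ``since $\Im\pi^*g$ is the image of the morphism $\pi^*g$ whose kernel is now a genuine subbundle'' is not the relevant hypothesis---what you need is that $\pi^*E$ (respectively $\pi^*F$) is locally a sum of line bundles. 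Second, your final paragraph is unnecessary: once the form upstairs is smooth, its pushforward under a modification is locally integrable, agrees with the given form on $X\setminus Z_0$, and since $Z_0$ has measure zero this \emph{is} the natural extension; there is no possibility of an extra piece supported on $Z_0$ for a locally integrable current.
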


If $\pi\colon \widetilde X\to X$ is a modification, then it is generically one-to-one and hence $\pi_*1=1$.
It follows that $\pi_*\pi^* a= a$ if $a$ is a smooth form on $X$.

\begin{proof}
In a \nbh $U$ of any given point $x\in X$ both $E$ and $F$ are trivial and by Lemma~\ref{utvid} there is
a modification $\pi\colon\widetilde U\to U$ such that $\Im \pi^*g$ and $\Ker \pi^*g$ have extensions
from $\widetilde U\setminus \pi^{-1} Z_0$ to $\widetilde U$.   Since these extensions are subbundles
of $p^*F$ and $p^*E$, respectively, they inherit Hermitian metrics. 
In $\widetilde U\setminus \pi^{-1} Z_0$ we have $\pi^* s(E/\Ker g)=s(\pi^* E/\Ker \pi^* g)$, and thus 
\begin{equation}\label{maskin}
s(E/\Ker g)=\pi_* s(\pi^* E/\Ker \pi^* g)
\end{equation}
in $U\setminus Z_0$. 
Since the Hermitian bundle $\pi^* E/\Ker \pi^* g$ has an extension to $\widetilde U$,  
$s(\pi^* E/\Ker \pi^* g)$ has a smooth extension to $\widetilde U$, in particular it is
locally integrable, and hence
$\pi_*s(\pi^* E/\Ker \pi^* g)$ is locally integrable in $U$. 
In view of \eqref{maskin} it coincides with $s(E/\Ker g)$ in $U\setminus Z_0$ and since $Z_0$ is a set
of measure zero, thus $\1_{U\setminus Z_0}s(E/\Ker g)$ is locally integrable. 
The other statements are proved in the same way.
\end{proof}

\begin{lma}\label{utvidgning} 
If  $X$ is  compact and projective and $g\colon E\to F$ is any morphism,
then there is a modification $\pi\colon \widetilde X\to X$ such that
both $\Ker \pi^*g$ and $\Im \pi^* g$ have bundle extensions to $\widetilde X$.
\end{lma}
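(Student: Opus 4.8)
The plan is to bootstrap from Lemma~\ref{utvid}, which already produces, over any coordinate patch, a modification resolving $\Ker g$ (equivalently $\Im g^*$); the new ingredient needed here is to globalize such a construction and to handle $\Im g$ in addition to $\Ker g$. First I would observe that it suffices to produce one modification $\pi_1\colon X_1\to X$ so that $\Ker\pi_1^*g$ extends to a subbundle of $\pi_1^*E$ over all of $X_1$, and then a second modification $\pi_2\colon \widetilde X\to X_1$ so that $\Im\pi_2^*\pi_1^*g$ extends to a subbundle of $\pi_2^*\pi_1^*F$ over $\widetilde X$; composing $\pi=\pi_1\circ\pi_2$ and noting that $\Ker$ of the pullback is the pullback of the (already extended) $\Ker$, which remains a subbundle after a further modification, gives the claim.

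Second, to produce $\pi_1$ I would use the standard device of blowing up a Fitting ideal. Over $X\setminus Z_0$ the morphism $g$ has locally constant rank $\rho$, so the sheaf $\Ker g$ is a subbundle there; globally $\Ker g$ is the kernel of a coherent sheaf morphism, and its failure to be locally free is detected by a Fitting ideal sheaf $\F$ supported on $Z_0$. Since $X$ is projective, by Hironaka (or by repeatedly normalizing blow-ups of $X$ along such ideals, exactly as in the footnote to Section~\ref{ma-produkt}) there is a projective modification $\pi_1\colon X_1\to X$, an isomorphism over $X\setminus Z_0$, such that $\pi_1^*\F$ becomes principal; on the resulting space the saturation of $\Ker\pi_1^*g$ is locally free and agrees with $\Ker\pi_1^*g$ over the dense open set $X_1\setminus\pi_1^{-1}Z_0$, hence is the desired bundle extension. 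Alternatively, and more in the spirit of Lemma~\ref{utvid}, I would reduce to that lemma directly: cover $X$ by finitely many coordinate patches $U_\alpha$ (possible since $X$ is compact), apply Lemma~\ref{utvid} on each to get $\pi_\alpha\colon\widetilde U_\alpha\to U_\alpha$, and then invoke the fact that a finite collection of modifications over members of an open cover can be dominated by a single global modification of $X$ (again using projectivity and Hironaka to resolve the relevant ideal globally).

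Third, for $\Im g$ I would apply the dual of this construction: $\Im g=(\Ker g^*)^\perp$, or more straightforwardly, $\Im g$ is, generically, the image subbundle, and its extension is governed by the same kind of Fitting-ideal/blow-up argument applied to $g^*\colon F^*\to E^*$ — indeed Lemma~\ref{utvid} applied to $g^*$ (after noting, as in the remark following that lemma, that extending $\Im g^*$ is the same as extending $\Ker g$) already hands us, over patches, a modification after which $\Im(\pi^*g^*)$, equivalently $\Ker(\pi^*g)^\perp$, extends; dualizing back gives an extension of $\Im\pi^*g$. Then I globalize and compose exactly as above, and finally pass to a common refinement of the modification extending $\Ker$ and the one extending $\Im$.

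The main obstacle I anticipate is not any single step but the bookkeeping of globalization: Lemma~\ref{utvid} is stated and proved locally (it trivializes $E$ and $F$ and peels off line-bundle factors one index at a time), so the real content here is the assertion that these local modifications can be amalgamated into a single proper modification of the compact projective $X$. This is where projectivity is essential — it guarantees both that the relevant (Fitting) ideal sheaves extend to coherent sheaves on all of $X$ and that blow-ups of $X$ along them are again projective, so Hironaka's resolution applies uniformly. Once one accepts that a finite family of modifications over an open cover is dominated by a global modification (a standard consequence of resolution of singularities and properness), the rest is the routine observation that pulling back an already-extended subbundle along a further modification still yields a subbundle, together with the duality $\Im g=(\Ker g^*)^\perp$ to treat $\Im g$.
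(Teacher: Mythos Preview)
Your outline can be made to work, but it differs from the paper's and the Fitting-ideal step is imprecise. The sheaf $\Ker\pi_1^*g$ is already saturated in $\pi_1^*E$ (its cokernel embeds in $\pi_1^*F$, hence is torsion-free), so passing to the saturation does nothing; and local freeness of the kernel is neither what a single principalized Fitting ideal immediately gives nor quite what you need---you need a \emph{subbundle} extending the generic kernel, i.e., a resolution of the rational section $X\dashrightarrow Gr(r-\rho,E)$. That can indeed be arranged by blowing up the ideal of $\rho\times\rho$ minors (the resolved map lands in the Grassmannian by density), so your approach is salvageable, but not in one line. Your patching alternative is likewise plausible but would require arguing that the local modifications from Lemma~\ref{utvid}, which depend on the chosen trivializations, are dominated by a single global one.

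The paper sidesteps all of this by using projectivity in a different and more direct way. Take an ample line bundle $L$; by Serre, for suitable $\kappa$ there is a sheaf surjection from a sum of copies of $L^\kappa$ onto $\Ker g$, i.e., a generically exact complex $S\stackrel{h}{\to}E\stackrel{g}{\to}F$ of bundles with $S$ a \emph{global} direct sum of line bundles. Now Lemma~\ref{utvid} (in its reformulation: extending the image of a morphism \emph{from} a sum of line bundles) applies directly over all of $X$---no localization, no patching---and yields a modification on which $\Im\pi^*h$, which generically equals $\Ker g$, extends to a subbundle. For $\Im g$ one argues on the other side: resolve to get $E\stackrel{g}{\to}F\stackrel{f}{\to}S$ generically exact with $S$ again a sum of line bundles, and Lemma~\ref{utvid} extends $\Ker\pi^*f$, which generically is $\Im g$. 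So projectivity enters via global generation rather than via Hironaka-style resolution.
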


\begin{proof}
Let $L\to X$ be an ample line. Since  $\F= {\mathcal Ker}  (\Ok(E)\stackrel{g}{\to} \Ok(F))$ is a coherent sheaf,
and $X$ is compact, $\F\otimes L^\kappa$ is generated by a finite number of
global sections if $\kappa$ is large enough, see, e.g., \cite[Theorem~1.2.6]{Laz}.  
 If $S_j=L^{-\kappa}$ and  $S=\oplus_1^r S_j$,
 we therefore have a morphism $h$ so that  
$
\Ok(S)\stackrel{h}{\to} \Ok(E)\stackrel{g}{\to} \Ok(F)
$
is an exact sequence of sheaves. It follows that 
$
S\stackrel{h}{\to} E\stackrel{g}{\to} F
$
is a generically exact complex of vector bundles. By Lemma~\ref{utvid} there is a modification such that
$\Im \pi^* h$ has a bundle extension to $\widetilde X$. Since it coincides generically with $\Ker \pi^* g$,
therefore  $\Ker \pi^* g$ has the same extension to $\widetilde X$. 

In the same way we can find a similar bundle $S^*$ and a homomorfism $f$ such that
$S^*\stackrel{f}{\to} F^*\stackrel{g}{\to} E^*$
is generically exact. Hence 
$
E\stackrel{g}{\to} F \stackrel{f}{\to} S
$
is generically exact and it follows from Lemma~\ref{utvid}
that there is a further modification such that $\Ker \pi^* f$ and hence $\Im \pi^*g$  
have bundle extensions to $\widetilde X$.
\end{proof}

\begin{remark}
Following the proof of Lemma~\ref{utvid} we can produce a local holomorphic frame for
the extension of $\Ker g$.  To simplify notation we suppress all $\pi_j$. We can assume
that all $S_j$ are trivial so that $g_j$ are just sections of $E^*$. Moreover,
we can assume that $r=\rho$, since otherwise we delete 'unnecessary' $g_j^*$ from the beginning.  
Now $g_1=g_1^0g_1'$, where $g_1'$ is non-vanishing and hence defines a subbundle of
$E^*$ of rank $1$, or equivalently a subbundle $N_1$ of $E$ of codimension $1$. 
Locally we can find a section $e_1^*$ of $E^*$ that is parallell with $g_1'$ so that
$g_1=\alpha_{11}e_1^*$. By assumption the restriction of $g_2$ to $N_1$ does not
vanish identically. Thus after a further modification $g_2=g_2^0 g_2'$ where $g_2'$ is non-vanishing
on $N_1$. We can choose a local section $e_2^*$ of $E^*$ such that its image in $N_1$ is
parallell with $g_2'$. It follows that $g_2=\alpha_{21}e_1^*+\alpha_{22}e_2^*$.
Proceeding in this way we get linearly independent sections 
$e_1^*, \ldots, e_r^*$ of $E^*$ such that $N$ is subbundle of $E$ that annihilates all of them. 
Moreover, for $\ell=1,\ldots,r$,
$$
g_\ell=\alpha_{\ell1}e_1^*+\cdots +\alpha_{\ell\ell}e_\ell^*,
$$
where $\alpha_{\ell\ell}$ does not vanish identically. Notice that 
$
\det g=g_1\w\ldots \w g_r=\alpha_{11}\cdots\alpha_{rr} e_1^*\w\ldots\w e_r^*.
$
If we extend $e_j^*$ to a local frame
$e_1^*, \ldots, e_m^*$ for $E^*$ and let $e_1,  \ldots, e_m$ be he dual frame for $E$, then
$N$ is spanned by $e_{r+1}, \ldots, e_n$.
 \end{remark}




\vspace{.5cm}

\section{Definition of $M^g$ and the main result Theorem~\ref{thmett}}\label{defsec}

First assume that $E$ is a line bundle so that $g$ is a section of
$F\otimes E^*$.  
We define 
\begin{equation}\label{ragnar}
M^g=s(E)\w\sum_{\ell=0}^\infty \1_Z [dd^c\log|g|_\circ^2]^\ell,
\end{equation}
where $|g|_\circ$ means that we suppress $E^*$ so that locally
$dd^c\log|g|_\circ^2=dd^c\log|ag|^2$ for any non-vanishing section $a$ of $E$,
cf.~Section~\ref{twistma}.

From now on we assume that $r=\rank E\ge 2$.
Let $\Pk(E)$ be the projectivization of $E$, let $p\colon \Pk(E) \to X$ be the
natural projection, and let $L\subset p^* E$ be the tautological bundle,
cf.~Section~\ref{prel2}. 
Notice that a local section $\sigma$ of $L$ has the form 
\begin{equation}\label{greta}
\sigma=s(x,\alpha)\alpha
\end{equation}
at $(x,[\alpha])$, $\alpha\in E_x$, where $s(x,\alpha)$ is a holomorphic function on
$E\setminus\{\bf 0\}$, $\bf 0$ denoting the zero section, that is 
$-1$-homogeneous  in $\alpha\in E_x\setminus\{0\}$. 
By \eqref{greta} we can identify sections $\sigma$ of $L$ with  such $s(x,\alpha)$, and thus consider $\alpha$
as a section of $p^*E\otimes L^*$. 
Therefore, cf.~Section~\ref{twistma},
$dd^c\log|\alpha|^2_\circ:=dd^c\log|s\alpha|^2$ is a global form on $\Pk(E)$, and in fact
equal to $dd^c\log|\sigma|^2=-c_1(L)$, cf.~\eqref{plbas}.  Thus $c(L)=1-dd^c\log|\alpha|^2_\circ$  
so that  
\begin{equation}\label{defseg}
s(L)=\sum_{\ell=0}^\infty \omega_\alpha^\ell,   \quad \omega_\alpha=dd^c\log|\alpha|_\circ^2.
\end{equation}

Since $g$ induces a morphism $p^*E\to p^*F$,  in particular it defines a morphism
$L\to p^*F$.  A local section of $L$, represented by the  $-1$-homogeneous function $s(x,\alpha)$ as above,
is mapped to the well-defined section
$s(x,\alpha)g(x)\alpha$ of $p^*F$.  Thus
\begin{equation}\label{noxdef}
G(x,\alpha):=g(x)\alpha
\end{equation}
is a holomorphic section of $p^*F\otimes L^*\to \Pk(E)$.

Let $Z'$ be the zero set of $G$ on $\Pk(E)$. 
 As  before, let $Z$ be the set where $g$ is not injective and let $Z_0$ be the set where $g$ does not have optimal rank.
If $g$ is generically injective, then $Z=Z_0$ and
$Z'\subset p^{-1}Z_0$.  If $g$ is not generically injective, then $Z=X$ and $p(Z')=X$. 
If $N=\Ker g$, then $\Pk(N)$ is a submanifold of $\Pk(E)$ in $ p^{-1}(X\setminus Z_0)$ and 
$$
Z'\cap p^{-1}(X\setminus Z_0)=\Pk(N)\cap  p^{-1}(X\setminus Z_0).
$$

\smallskip

Letting $|g\alpha|_\circ=|G|_\circ=|sG|=|sg\alpha|$, where $s$ is a local non-vanishing section of $L$, we have,
following Section~\ref{twistma},
the generalized Monge-Amp\`ere powers  
$
[dd^c \log |g\alpha|_\circ^2]^\ell, \quad \ell=0,1,2,\ldots
$
and their residues
$
\mathring M^{g\alpha}_\ell= \1_{Z'}[dd^c\log|g\alpha|^2_\circ]^\ell,  \quad \ell=0,1,2,\ldots.
$
Locally on $\Pk(E)$ thus
\begin{equation}\label{ringa}
\mathring M^{g\alpha}=M^{sg\alpha}.
\end{equation}

\begin{df}\label{mdef}
We define 
$M^g=p_*\big(s(L)\w \mathring M^{g\alpha}\big)$. 
\end{df}
Thus  
$M^g=M_0^g+M_1^g+\cdots + M_n^g$, where $M_k^g=p_*(s(L)\w \mathring M^{g\alpha})_{k+r-1}$ are
closed $(k,k)$-currents with support on $Z$. 
Notice that $\mathring M^{g\alpha}$ and $s(L)$  only depend on the metrics on $F$ and $E$, respectively.

\begin{ex}\label{trivial}
Assume that $E$ and $F$ are trivial and have  trivial metrics. We can assume that $E=\C^r_\alpha\times X$,
$F= X\times \C^m$, with the Euclidian metric on $\C^r_\alpha$ and $\C^m$. Then 
$\Pk(E)= X\times \Pk(\C^r_\alpha) $ and $\omega_\alpha=dd^c\log|\alpha|_\circ^2$ is the usual 
Fubini-Study metric form on $\Pk(\C^r_\alpha)$; in particular it is a positive form.
Thus $s(L)$, cf.~\eqref{defseg}, is 
independent of $x$.  Moreover, locally, for any non-vanishing holomorphic $-1$-homogeneous $s$,
e.g., $s=1/\alpha_j$ in the open set  $\alpha_j\neq 0$, 
$$
[dd^c\log|g\alpha|^2_\circ]^\ell=[dd^c\log|sg\alpha|^2]^\ell
$$
is a positive current. 
Since $\omega_\alpha$ is a positive $(1,1)$-form therefore, cf.~\eqref{defseg},  $s(L)\w \mathring M^{g\alpha}$ is a positive current on $\Pk(E)$, and thus
$M^g$ is a positive current on $X$.
\end{ex}

\begin{df}\label{brutus}
We say that the morphisms $g\colon E\to F$ and $g'\colon E\to F'$  
are {\it comparable} if locally in $X$ 
$$
|g(x)\alpha| \sim |g'(x)\alpha|, \quad  \alpha\in E_x.
$$
 \end{df}

In case $r=1$,  comparability means that the entries in $g$ and $g'$, respectively,  generate
ideal sheaves with the same integral closure,

\begin{thm}\label{thmett}
Let $E$ and $F$ be Hermitian vector bundles over $X$ and
$g\colon E\to F$ a holomorphic morphism.
The following holds:

\smallskip
\noindent (o) 
The currents $M^g_k$  are generalized cycles, smooth in the Zariski-open set $X\setminus Z_0$ where $g$ has optimal rank,
and  positive  on $X$ if  $E$ and $F$ have trivial metrics.

\smallskip
\noindent (i) 
The natural extension $\1_{X\setminus Z}s(\Im g)$ to $X$ of $s(\Im g)$ is locally integrable and closed,
and there is a current $W^g$ with singularities  of logarithmic type along $Z_0$ such that  
\begin{equation}\label{main11}
dd^c W^g= M^g+  \1_{X\setminus Z}s(\Im g)  -s(E).
\end{equation}

\smallskip
\noindent (ii) 
If $i\colon X'\to X$ is an open subset, then
$M^{i^*g}$ is the restriction of $M^g$ to $X'$. 

\bigskip
\noindent (iii) 
If $\pi\colon \widetilde X\to X$ is a modification, 
then  
$
\pi_* M^{\pi^*g}= M^g.
$

\smallskip
\noindent (iv) If $i \colon F\to F'$ is a subbundle with the metric inherited from $F'$,  then
\begin{equation}\label{sommar62}
M^{i\circ g}=M^g.
\end{equation}

\smallskip
\noindent (v)
If  $g'\colon E'\to F'$ is pointwise injective, then 
\begin{equation}\label{plus}
M^{g\oplus g'}=s(\Im g') \w M^{g}.
\end{equation}

\smallskip
\noindent (vi)
The  multiplicities $\mult_x M_k^g$ are non-negative integers.

 \smallskip
\noindent (vii)
If $g$ and $g'$ are comparable, then $\mult_x M^g_k=\mult_x M^{g'}_k$ for each $k$ and each point $x$.

\smallskip
\noindent (viii)
For each $k$
we have a unique decomposition
\begin{equation}\label{king}
M^g_k=\sum_j \beta_j^k[Z_j^{k}]+N^g_k=:S^g_k+N^g_k,
\end{equation}
where $Z_j^{k}$ are irreducible subvarieties of codimension $k$, $\beta_k^j$ are positive integers,
and $N^g_k$ is a closed $(k,k)$-current with support on $Z$ whose multiplicities vanish
outside a variety of codimension $\ge k+1$.
Moreover, $\cup_{jk} Z_j^k=Z$.
\end{thm}

Notice that $Z$ has positive codimension if and only if $g$ is generically injective. 
If $g$ is not generically injective, thus
$
dd^c W^g= M^g -s(E),
$
and  $M^g$ is smooth in the open set $X\setminus Z_0$ where $g$ has optimal rank,  see  part (o)
and Proposition~\ref{guppi}. 

\smallskip\noindent
By the dimension principle for normal currents, $M^g_k=0$ if $k<\codim Z$.  

\smallskip\noindent
If $g$ is generically injective, then $E$ and $\Im g$ are isomorphic in $X\setminus Z$ so that
$s(E)$ and $s(\Im g)$ define the same Bott-Chern (cohomology) class there. Equality \eqref{main11} is 
an extension across $Z$. If $g$ is not generically injective, then $M^g$ is a representative
of the Bott-Chern cohomology class $\hat s(E)$.

 \smallskip\noindent
A variant of (iii) holds for a general proper mapping $h$,  see Proposition~\ref{kakadua}.
Regarding $(v)$, notice that $\Im (g\oplus g')= \Im g \oplus \Im g'$ in $X\setminus Z$ 
and $s(\Im g\oplus \Im g')=s(\Im g)\w s(\Im g')$, and thus  
\eqref{plus} is consistent with \eqref{main11}.

\smallskip
\noindent 
Parts (vii) and (viii) of Theorem~\ref{thmett} imply that 
if $g$ and $g'$ are comparable, then $M^g_k$ and $M^{g'}_k$ have 
the same fixed part.

\vspace{.5cm}

\section{Proofs of Theorem~\ref{thmett} and  Proposition~\ref{avig}}\label{proofsection} 
First we need some preparations.  We keep the notation from Section~\ref{defsec}. In particular, recall
that $N=\Ker g$ over $X\setminus Z_0$. 

\begin{lma}\label{genlemma}  Assume that $g$ is not generically injective. Then the section
$G$ generates the ideal defining $\Pk(N)$ in $\Pk(E)\setminus p^{-1}Z_0$. 
\end{lma}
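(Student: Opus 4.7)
The plan is to reduce to a local calculation on $\Pk(E)\setminus p^{-1}Z_0$ and check that the components of $G$ generate the ideal of $\Pk(N)$ in suitable affine charts. Since $g$ has optimal rank $\rho$ on $X\setminus Z_0$, near any $x_0\in X\setminus Z_0$ the kernel $N=\Ker g$ is a subbundle of rank $r-\rho$, and I may choose a local frame $e_1,\ldots,e_r$ of $E$ such that $e_{\rho+1},\ldots,e_r$ span $N$ and $e_1,\ldots,e_\rho$ span a complement. Then $g(e_j)=0$ for $j>\rho$ while $g(e_1),\ldots,g(e_\rho)$ are pointwise linearly independent sections of $F$.

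At any $(x_0,[\alpha_0])$ outside $\Pk(N)$ the section $G$ is nonvanishing, so it generates the unit ideal, which matches the (empty) ideal of $\Pk(N)$ nearby. For the remaining case $(x_0,[\alpha_0])\in\Pk(N)$ some $\alpha_0^j$ with $j>\rho$ is nonzero; after relabelling I may assume $j=r$. On the affine chart $\alpha^r\neq 0$ with coordinates $w^j=\alpha^j/\alpha^r$, the submanifold $\Pk(N)$ is cut out by $(w^1,\ldots,w^\rho)$. Trivializing $L^*$ in the standard way on this chart, $G$ is represented by the $F$-valued function $g\alpha/\alpha^r=\sum_{j=1}^{r-1}w^j g(e_j)+g(e_r)=\sum_{j=1}^{\rho}w^j g(e_j)$. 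Expanding in a local frame $f_1,\ldots,f_m$ of $F$ with $g(e_j)=\sum_i g_{ij}f_i$, the components of $G$ are the linear forms $G^i=\sum_{j=1}^{\rho}g_{ij}w^j$. The $m\times\rho$ coefficient matrix $(g_{ij})$ has full column rank $\rho$ pointwise because the $g(e_j)$ are linearly independent, so some $\rho\times\rho$ minor of $(g_{ij})$ is invertible at every point and the $G^i$ generate the same ideal as $w^1,\ldots,w^\rho$, which is precisely the ideal of $\Pk(N)$ in the chart.

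The argument is essentially linear-algebraic bookkeeping; the key input is the optimal-rank hypothesis on $X\setminus Z_0$, which both makes $\Ker g$ a smooth subbundle (so that $\Pk(N)$ is a submanifold) and forces the matrix of $g$ in the chosen frames to have full column rank, so that the components of $G$ form a regular system of parameters for $\Pk(N)$ in the chart.
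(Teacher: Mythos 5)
Your proof is correct and follows essentially the same route as the paper: choose a local frame of $E$ adapted to $N=\Ker g$ so that $G$ reduces to $g$ applied to the complementary coordinates, then use the pointwise injectivity of $g$ on that complement (the paper via a left inverse $h$ with $hg'\alpha'=\alpha'$, you via an invertible $\rho\times\rho$ minor) to conclude that the components of $G$ generate the same ideal as the coordinates cutting out $\Pk(N)$.
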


\begin{proof}
Locally in $X\setminus Z_0$ we can choose a trivialization $E=\U\times \C_\alpha^r$ such that
$N=\{\alpha_1=\cdots =\alpha_\rho=0\}$. Let $\alpha=(\alpha', \alpha'')=(\alpha_1, \ldots \alpha_\rho, \alpha'')$. 
Then $\alpha'\mapsto g(\alpha',\alpha'')=g(\alpha',0)=g'\alpha'$ is injective, and hence there is $h$ such that
$hg'\alpha'=\alpha'$. Thus $\la g'\alpha'\ra=\la \alpha'\ra$ so that $g\alpha=g'\alpha'$ generates $N$.
Now the lemma follows. 
\end{proof}

By the lemma $G$ defines a regular embedding in $\Pk(E)\setminus p^{-1}Z_0$ and thus, cf.~Section~\ref{regem},  it induces
an embedding  $\iota \colon \mathcal N_{\Pk(N)} \to p^*F\otimes L^*$ and hence a mapping
$\iota \colon \mathcal N_{\Pk(N)} \to p^*\Im g\otimes L^*$
on $\Pk(N)$.
For dimension reasons $\iota$ and hence the induced mapping 
%
%
\begin{equation}\label{kalle}
 \mathcal N_{\Pk(N)}\otimes L\simeq p^*\Im g 
 \end{equation}
 must be isomorphisms on  $\Pk(N)\setminus p^{-1}Z_0$.


\begin{remark}
One can establish the isomorphism \eqref{kalle} in a more direct way.  Recalling that $T\Pk(E)=p^*E/[\alpha]$ and similarly 
$T\Pk(N)=p^*N/[\alpha]$ we have 
$$
 \mathcal N_{\Pk(N)}=T\Pk(E)/T\Pk(N) =p^*E/[\alpha]\big/ p^*N/[\alpha].
$$
Since $(x,\gamma)\mapsto g(x)\gamma\in F\otimes L^*$ is injective on $p^*E/[\alpha]\big/ p^*N/[\alpha]$, 
the isomorphism follows. 
\end{remark}

We conclude that 
$$
s(\No_{\Pk(N)}\otimes L)=s(p^* \Im g)=p^*s(\Im g)
$$
on $\Pk(N)\setminus p^{-1} Z_0$. 
From \eqref{twistreg} 
we have the representation 
%
%
 %
\begin{equation}\label{ringsegre}
\mathring M^{G}=p^*s(\Im g)\w [\Pk(N)].
\end{equation}
Let $p'\colon \Pk(N)\to X$ be the natural projection. Then by \eqref{ringsegre},
\begin{multline*}
M^g=p_*\big(s(L)\w  \mathring M^G\big)=p_*(s(L)\w p^*s(\Im g)\w [\Pk(N)]\big) \\ =s(\Im g)\w p_*\big(s(L)\w[\Pk(N)]\big)=
s(\Im g)\w p'_* s(L)=s(\Im g) \w s(N)
\end{multline*}
on $X\setminus Z_0$.
The last equality holds since 
the restriction of $s(L)$ to $\Pk(N)$ is equal to $s(L')$ where
$L'$ is the tautological line bundle on $\Pk(N)$ with the metric inherited from $E$. 
%
\begin{prop}\label{guppi} 
Assume that $g$ not generically injective. In $X\setminus Z_0$ we have $M^g=s(\Im g)\w s(N)$.
\end{prop}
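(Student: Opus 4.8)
The plan is to compute $M^g$ locally on $X\setminus Z_0$, where $g$ has optimal rank, by first describing the current $\mathring M^{g\alpha}$ on $\Pk(E)$ near $\Pk(N)$ and then pushing down by $p$. Throughout we work in $X\setminus Z_0$, so that $N=\Ker g$ is a subbundle of $E$, $\Im g$ a subbundle of $F$, and $\Pk(N)$ a nonsingular subvariety of $\Pk(E)\cap p^{-1}(X\setminus Z_0)$. By Lemma~\ref{genlemma} the section $G=g\alpha$ of $p^*F\otimes L^*$ generates the (radical) ideal of $\Pk(N)$, hence defines a regular embedding whose fundamental cycle is the reduced cycle $[\Pk(N)]$. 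Therefore the twisted regular-embedding formula \eqref{twistreg} applies and gives $\mathring M^{G}=s(\No_{\Pk(N)}\otimes L)\w[\Pk(N)]$, where $\No_{\Pk(N)}$ carries the metric induced from $p^*F\otimes L^*$ and $\No_{\Pk(N)}\otimes L$ the one induced from $p^*F$. The isomorphism \eqref{kalle}, $\No_{\Pk(N)}\otimes L\simeq p^*\Im g$, is realized as an isometry onto the subbundle $p^*\Im g\subset p^*F$ (both sides get the metric induced from $p^*F$), so $s(\No_{\Pk(N)}\otimes L)=p^*s(\Im g)$ and we obtain \eqref{ringsegre}: $\mathring M^{G}=p^*s(\Im g)\w[\Pk(N)]$ on $\Pk(E)\setminus p^{-1}Z_0$.

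Next I would insert this into Definition~\ref{mdef} and use the projection formula. Since $p$ is a submersion and $s(\Im g)$ is a smooth form on $X\setminus Z_0$, we have $p_*(p^*s(\Im g)\w\eta)=s(\Im g)\w p_*\eta$, so
\[
M^g=p_*\big(s(L)\w\mathring M^{G}\big)=p_*\big(s(L)\w p^*s(\Im g)\w[\Pk(N)]\big)=s(\Im g)\w p_*\big(s(L)\w[\Pk(N)]\big).
\]
Writing $p'\colon\Pk(N)\to X$ for the restriction of $p$, we have $p_*(s(L)\w[\Pk(N)])=p'_*\big(s(L)|_{\Pk(N)}\big)$. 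Finally $L|_{\Pk(N)}$ is canonically the tautological line bundle $L'=\Ok_{\Pk(N)}(-1)\subset (p')^*N$, and the metric inherited from $E$ restricts to the metric inherited from $N$; hence $s(L)|_{\Pk(N)}=s(L')$ and, by the defining formula \eqref{segreclass} applied to $N\to X\setminus Z_0$, $p'_*s(L')=s(N)$. This yields $M^g=s(\Im g)\w s(N)$ on $X\setminus Z_0$. In the case $N=0$ one has $\Pk(N)=\emptyset$, so $\mathring M^{G}=0$ and $M^g=0$ there, which is precisely why one adopts the convention $s(N):=0$ in that case, so that the identity reads as stated.

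The main obstacle is the first paragraph: one must check that $G$ cuts out $\Pk(N)$ with \emph{reduced} structure, so that \eqref{twistreg} produces the fundamental cycle exactly $[\Pk(N)]$ with no hidden multiplicities, and that the twisted normal bundle $\No_{\Pk(N)}\otimes L$ is \emph{isometrically} $p^*\Im g$ and not merely isomorphic to it. Only then does $s(\No_{\Pk(N)}\otimes L)$ equal $p^*s(\Im g)$ as a form rather than just in the same Bott--Chern class, and this metric bookkeeping is what makes the final conclusion an equality of currents and not merely of cohomology classes. Once \eqref{ringsegre} is in place, the remainder is the projection formula together with the standard identification, via \eqref{segreclass}, of the Segre form of a subbundle with the pushforward of the powers of $c_1$ of its tautological line bundle.
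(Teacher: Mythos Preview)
Your proof is correct and follows essentially the same route as the paper: derive \eqref{ringsegre} from Lemma~\ref{genlemma}, \eqref{twistreg}, and the isomorphism \eqref{kalle}, then push down via the projection formula and identify $p'_*(s(L)|_{\Pk(N)})=s(N)$ using \eqref{segreclass}. Your added remarks about reducedness of the ideal cut out by $G$ and about the isometry in \eqref{kalle} are exactly the points the paper leaves implicit, and your handling of the degenerate case $N=0$ matches the paper's convention.
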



We now turn our attention to regularizations of $M^g$.  
To begin with we apply Proposition~\ref{circvprop}
to $\mathring M^{g\alpha}$ with $V=Z'$. 
Notice that $|g\alpha|^2/|\alpha|^2$ is a global function on $\Pk(E)$ with
zero set $Z'$ so we can take $\chi_\e=\chi(|g\alpha|^2/\e |\alpha|^2)$.  Also notice that
$g\alpha$ is a section of $p^* F\otimes L^*$ and that
$\alpha$ is a non-vanishing section $\alpha$ of $p^*E\otimes L^*$.  By Proposition~\ref{circvprop}
the smooth forms  
$$
1-\chi_\e +\dbar\chi_\e\w \frac{\partial\log(|g\alpha|/|\alpha|)^2}{2\pi i}\w \sum_{\ell=0}^\infty\la dd^c\log|g\alpha|_\circ^2\ra^\ell
$$
on $\Pk(E)$ tend to $\mathring M^{g\alpha}$.  Since $p\colon \Pk(E)\to X$ is a submersion
we  get

\begin{prop}\label{mreg1}
With the notation above the forms\footnote{The term $1-\chi_\e$ can be omitted unless $g\equiv0$.}
\begin{equation}\label{fasan}
M^{g,\e} := p_*\Big(s(L)\w\big(1-\chi_\e +\dbar\chi_\e\w \frac{\partial\log(|g\alpha| /|\alpha|)^2}{2\pi i}\w \sum_{\ell=0}^\infty\la dd^c\log|g\alpha|_\circ^2\ra^\ell\big)\Big)
\end{equation}
are smooth on $X$ and tend to $M^g$ when $\e\to 0$.
\end{prop}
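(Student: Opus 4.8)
The plan is to read \eqref{fasan} off as Proposition~\ref{circvprop} applied on the projective bundle $\Pk(E)$ and then pushed down to $X$. First I would set up that application on $\Pk(E)$: take $p^*F$ in the role of ``$F$'', take $p^*E$ in the role of ``$H$'', take the line bundle $L$ in the role of ``$S$'', take the section $\phi:=G=g\alpha$ of $p^*F\otimes L^*$ from \eqref{noxdef} in the role of ``$\phi$'', and take the non-vanishing section $\alpha$ of $p^*E\otimes L^*$ from \eqref{greta} in the role of ``$\alpha$''. Since $|g\alpha|/|\alpha|$ is a well-defined global function on $\Pk(E)$ with zero set exactly $Z'$ (as explained right after Proposition~\ref{circvprop}), the functions $\chi_\e=\chi\big(|g\alpha|^2/(\e|\alpha|^2)\big)$ form an admissible sequence tending to $\1_{\Pk(E)\setminus Z'}$, and I take $V=Z'$.

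Because this $V$ equals the zero set of $\phi=G$, Proposition~\ref{circvprop} --- in the variant noted in the remark after Proposition~\ref{vprop}, which applies precisely because $V$ contains that zero set --- then gives that the smooth forms
\begin{equation*}
\mathring T^{G,\e}_{Z'}=(1-\chi_\e)\1_{Z'}+\dbar\chi_\e\w\frac{\partial\log(|g\alpha|/|\alpha|)^2}{2\pi i}\w\sum_{\ell=0}^\infty\la dd^c\log|g\alpha|_\circ^2\ra^\ell
\end{equation*}
on $\Pk(E)$ converge, as $\e\to0$, to $\1_{Z'}\mathring M^{g\alpha}=\mathring M^{g\alpha}$ (the last equality because $\mathring M^{g\alpha}$ already has support on $Z'$). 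Next I would wedge by $s(L)$, which by \eqref{defseg} is a smooth closed form on $\Pk(E)$, and push down by $p$. Since $p\colon\Pk(E)\to X$ is a proper submersion, $p_*$ carries smooth forms to smooth forms and is continuous in the current topology, and multiplication by the smooth form $s(L)$ is continuous as well; hence $M^{g,\e}=p_*\big(s(L)\w\mathring T^{G,\e}_{Z'}\big)$ --- which is exactly \eqref{fasan} once the harmless factor $\1_{Z'}$ in front of $1-\chi_\e$ is suppressed, cf.~the footnote --- is smooth on $X$ and
\begin{equation*}
M^{g,\e}\ \longrightarrow\ p_*\big(s(L)\w\mathring M^{g\alpha}\big)=M^g,
\end{equation*}
the last equality being Definition~\ref{mdef}.

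The one point that needs attention --- and the only potential obstacle --- is to check that the hypotheses of Proposition~\ref{circvprop} genuinely apply: that $|g\alpha|/|\alpha|$ really is an honest global function on $\Pk(E)$ with zero set $Z'$, and that $Z'$ has positive codimension. The latter holds as soon as $g\not\equiv0$, since then $G=g\alpha\not\equiv0$ and so $Z'$ is a proper analytic subset of $\Pk(E)$; the degenerate case $g\equiv0$ has to be treated by hand, but there $\chi_\e\equiv0$, the bracket in \eqref{fasan} is identically $1$, and $M^{g,\e}=p_*s(L)=s(E)=M^g$ for every $\e$, so the statement is immediate. With these checks in place the proposition is pure bookkeeping: it repackages the regularization of Proposition~\ref{circvprop} on $\Pk(E)$ together with the elementary behaviour of $s(L)$ and of $p_*$ under the submersion $p$.
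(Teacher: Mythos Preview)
Your proof is correct and follows essentially the same route as the paper: apply Proposition~\ref{circvprop} on $\Pk(E)$ with $\phi=G=g\alpha$, $\alpha$ the tautological section, $V=Z'$, and $\chi_\e=\chi(|g\alpha|^2/\e|\alpha|^2)$, then wedge with the smooth form $s(L)$ and push forward by the submersion $p$. Your added care in spelling out why $p_*$ preserves smoothness and in handling the degenerate case $g\equiv 0$ explicitly is fine; the paper leaves these implicit.
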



Let us now assume that $g\colon E\to F$ is generically injective. That is,
$Z=Z_0$ and $p(Z')=Z_0$. Then $p^{-1}Z$ has positive codimension in $\Pk(E)$ so we can take
$V=p^{-1}Z$ in Proposition~\ref{circvprop}. 
Moreover,  $Z$  is  the zero set of the global section $\varphi=\det g$ of 
$\Lambda^r E^*\otimes\Lambda^r F$, where $r=\dim E$. 
In local frames for $E$ and $F$ it is the tuple of all  $r\times r$ minors of the associated matrix. 
Let $\chi_\e=\chi(|\det g|^2/\e)$, and for simplicity let us write $\chi_\e$ also for $p^*\chi_\e$.
  
\begin{prop}\label{mreg2}
Assume that $g$ is generically injective and $\chi_\e=\chi(|\det g|^2/\e)$. For each $\e>0$ 
\begin{equation}\label{premium}
M^{g,\e} := p_*\Big(s(L)\w\dbar\chi_\e\w \frac{\partial\log\big(|g\alpha|/ |\alpha|)^2}{2\pi i}\w \sum_{\ell=0}^\infty\la dd^c\log|g\alpha|_\circ^2\ra^\ell\Big)
\end{equation}
is a  smooth forms on $X$ that  vanishes in a \nbh of $Z=Z_0$,  and the sequence tends to $M^g$ when $\e\to 0$.
\end{prop}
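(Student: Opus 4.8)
The plan is to obtain Proposition~\ref{mreg2} from Proposition~\ref{mreg1} by observing that the only difference is the regularizing function: in Proposition~\ref{mreg1} we use $\chi_\e=\chi(|g\alpha|^2/\e|\alpha|^2)$ and cut off along $Z'$, while here we use $\chi_\e=\chi(|\det g|^2/\e)$ (pulled back to $\Pk(E)$) and cut off along $p^{-1}Z$. Since $g$ is generically injective, $Z=Z_0$ has positive codimension in $X$, hence $p^{-1}Z$ has positive codimension in $\Pk(E)$, and $p^{-1}Z$ is precisely the zero set of $p^*\det g$. Thus $V=p^{-1}Z$ is an admissible choice in Proposition~\ref{circvprop}, and moreover $V=p^{-1}Z\supset Z'$ because $G(x,\alpha)=g(x)\alpha$ certainly vanishes wherever $g(x)$ fails to be injective. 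Hence by the remark following Proposition~\ref{vprop} (and its $\circ$-variant, the remark following Proposition~\ref{circvprop} is obtained the same way), since $V\supset Z'$, the regularizing currents $\mathring T^{g\alpha,\e}_{p^{-1}Z}$ are in fact \emph{smooth} forms, and they tend to $\1_{p^{-1}Z}\mathring M^{g\alpha}=\mathring M^{g\alpha}$.

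First I would spell out that the expression inside $p_*$ in \eqref{premium} is exactly $s(L)\w \mathring T^{g\alpha,\e}_{p^{-1}Z}$ with the leading term $(1-\chi_\e)\1_{Z'}$ dropped; that term is $0$ here because $g$ is generically injective (so $g\not\equiv 0$ and $\1_{Z'}$ applied to the relevant degree is already absorbed, exactly as in the footnote of Proposition~\ref{vprop}/\ref{circvprop}), and in any case on the open set $\Pk(E)\setminus p^{-1}Z$ we have $\chi_\e\equiv 1$ for $\e$ small at any fixed point, so that term contributes nothing in the limit. Then $\mathring T^{g\alpha,\e}_{p^{-1}Z}$ is smooth on $\Pk(E)$ by the remark after Proposition~\ref{circvprop}, $s(L)$ is smooth and $p$ is a submersion, so $M^{g,\e}$ is a smooth form on $X$; and $\mathring T^{g\alpha,\e}_{p^{-1}Z}\to \1_{p^{-1}Z}\mathring M^{g\alpha}=\mathring M^{g\alpha}$, whence $M^{g,\e}\to p_*(s(L)\w\mathring M^{g\alpha})=M^g$ by Definition~\ref{mdef}.

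The remaining assertion, that $M^{g,\e}$ vanishes in a neighbourhood of $Z=Z_0$, I would get from the support of $\dbar\chi_\e$: since $\chi$ is constant off the band $\{1/2<t<3/4\}$, $\dbar\chi_\e$ is supported where $1/2<|\det g|^2/\e<3/4$, in particular on a set whose image under $p$ is contained in $\{|\det g|^2\ge \e/2\}$, which is disjoint from the zero set $Z$ of $\det g$. Hence the integrand in \eqref{premium} vanishes on $p^{-1}$ of a neighbourhood of $Z$, and pushing forward we conclude $M^{g,\e}\equiv 0$ near $Z$.

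The only genuine point requiring care — the ``main obstacle'' — is the justification that $\mathring T^{g\alpha,\e}_{V}$ is genuinely smooth (not merely locally integrable) when $V=p^{-1}Z\supset Z'$: this is the content of the remark after Proposition~\ref{vprop}, extended verbatim to the $\circ$-twisted setting via \eqref{buhu}, since the correction term there is smooth and does not affect smoothness. Everything else is a direct reading-off from Proposition~\ref{circvprop} and Definition~\ref{mdef}, so the proof is short.
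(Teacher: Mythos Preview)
Your proof is correct and follows essentially the same route as the paper: the paragraph preceding Proposition~\ref{mreg2} in the paper is its entire justification, namely that $p^{-1}Z$ has positive codimension (since $g$ is generically injective) so one may take $V=p^{-1}Z$ in Proposition~\ref{circvprop}, with $\varphi=\det g$ cutting out $Z$; you have simply spelled out the smoothness and vanishing-near-$Z$ conclusions more explicitly than the paper does.

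One small logical slip: your justification of $Z'\subset p^{-1}Z$ is phrased backwards. You write that $G(x,\alpha)=g(x)\alpha$ ``certainly vanishes wherever $g(x)$ fails to be injective'', but that is false---if $x\in Z$ then $g(x)\alpha=0$ only for $\alpha$ in the kernel, not for all $\alpha$. The correct direction (which is what you need) is: if $(x,[\alpha])\in Z'$, i.e.\ $g(x)\alpha=0$ with $\alpha\neq 0$, then $g(x)$ is not injective, so $x\in Z$, hence $(x,[\alpha])\in p^{-1}Z$. The conclusion $Z'\subset p^{-1}Z$ stands, only the sentence justifying it should be reversed.
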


\begin{remark}
 It follows from Proposition~\ref{circvprop} that the currents in \eqref{premium} tend to $\1_{Z_0}M^g$, 
 provided that $g$ is not identically $0$, 
 if we choose $\chi_\e$  that converges to $\1_{X\setminus Z_0}$. If the optimal rank of $g$ is $\rho$ we can take
 $\varphi$ as the $\rho$-determinant of $g$.  Still each $M^{g,\e}$ vanishes in a \nbh of $Z_0$ but it is
 not smooth in general. 
 \end{remark}

\subsection{Proof of (o) and (i) of Theorem~\ref{thmett}}
By Proposition~\ref{guppi} $M^g$ is smooth in $X\setminus Z_0$.
Lemma~\ref{ringlemma}   claims that $\mathring M^{g\alpha}$ is an element in $\GZ(\Pk(E))$ and, cf.~Section~\ref{gztrams},
therefore $s(L)\w \mathring M^{g\alpha}$ is in $\GZ(\Pk(E))$. Since $p\colon\Pk(E)\to X$ is
proper, cf.~Definition~\ref{mdef}, $M^g_k=p_*\big((s(L)\w \mathring M^{g\alpha})_{k+r-1}\big)$ is in $\GZ_{n-k}(X)$ for $k=0,1,2,\ldots$.
If the metrics on $E$ and $F$ are trivial, then $M^g$ is positive, cf.~Example~\ref{trivial}.
Thus (o) holds. 

\smallskip
If $g\equiv 0$, then $Z=X$ and $M^g=s(E)$, and so (i) is trivial. 
Let us therefore assume that $g$ is not identically $0$.  
We first consider the case when $E$ is a line bundle so that
%
$g$ is a section of $F\otimes E^*$. 
Let $a$ be a local non-vanishing section of $E$. In $X\setminus Z$ then 
$ga$ is a non-vanishing section of the line bundle $\Im g$. Therefore
the locally integrable currents, cf.~Section~\ref{defsec}, 
$$
\la dd^c\log|ga|^2\ra, \quad \sum_{\ell=0}^\infty \la dd^c\log|ga|^2\ra^\ell,
$$
are equal to $s_1(\Im g)$ and $s(\Im g)$, respectively in $X\setminus Z$.
Moreover, cf.~Section~\ref{defsec}, 
$$
w^g:=\log(|ga|^2/|a|^2)  s(E)\w \1_{X\setminus Z}s(\Im g) 
$$
is locally integrable in $X$. 
In $X\setminus Z$ we have 
$
dd^c \log(|ga|^2/|a|^2)=s_1(\Im g)-s_1(E).
$
Thus 
\begin{multline}\label{anita1}
\1_{X\setminus Z}dd^c w^g= 
\1_{X\setminus Z} (s_1(\Im g)-s_1(E))\frac{1}{1-s_1(E)}\frac{1}{1-s_1(\Im g)}=\\
\1_{X\setminus Z} \Big( \frac{1}{1-s_1(\Im g)}-\frac{1}{1-s_1(E)}\Big)=
\1_{X\setminus Z} s(\Im g)-s(E)
\end{multline}
whereas 
\begin{multline}\label{anita2}
\1_Z  dd^c w^g=\\
s(E)\w \sum_{\ell=0}^\infty\1_Z dd^c(\log(|ga|^2  \la dd^c\log|ga|^2\ra^\ell\big)= 
s(E)\w \1_Z\sum_{\ell=1}^\infty [dd^c\log|ga|^2]^\ell =M^g,
\end{multline}
cf.~\eqref{ragnar},
since $s(E)$ is smooth and closed and 
$$
\1_Z dd^c\big(\log|a|^2  \la dd^c\log|ga|^2\ra^\ell\big)=s_1(E) \1_Z \la dd^c\log|ga|^2\ra^\ell=0.
$$
Part (i) of Theorem~\ref{thmett}  now follows from \eqref{anita1} and \eqref{anita2} in case $\rank E=1$.


\vspace{.3cm}

Let us now assume that $r=\rank E\ge 2$.  
We keep the notation from Section~\ref{defsec}. Let $p'\colon \Pk(F)\to X$  and 
let $L'$ be the tautological line bundle in $(p')^*F\to \Pk(F)$.  
Notice that $g$ induces a holomorphic mapping $\tilde g\colon (\Pk(E)\setminus Z')\to \Pk(F)$ and so $\tilde g^*L'$ is a
well-defined line bundle over $\Pk(E)\setminus Z'$.  Moreover $p=p'\circ \tilde g$.
From now on we write $g$ rather that $\tilde g$ for notational simplicity.  If $s'$ is a section of $L'$ then
$s=g^* s'$ is a section of $L$.  Therefore,  
 since $g(x,[\alpha])=(x,[g(x)\alpha])$ 
 (if $\beta$ denotes elements in $F$)
 $$
g^*s_1(L')=g^* dd^c\log|\beta|^2_\circ=g^* dd^c\log|\beta s'|^2=dd^c\log|g\alpha s|^2=dd^c\log|g\alpha|_\circ^2.
$$
In view of \eqref{defseg} it is natural to introduce the locally integrable form  
\begin{equation}\label{sedan5}
g^*s(L'):=\sum_{\ell=0}^\infty \1_{\Pk(E)\setminus Z'}[dd^c\log|g\alpha|_\circ^2]^\ell=
\sum_{\ell=0}^\infty \la dd^c\log|g\alpha|_\circ^2\ra^\ell.
\end{equation}
on $\Pk(E)$. 
 
\begin{lma}\label{lmakork}
We have that  
\begin{equation}\label{sedan3}
p_* g^*s( L')=\1_{X\setminus Z} s(\Im g).
\end{equation}
\end{lma}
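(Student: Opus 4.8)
The plan is to push the whole identity up to a modification of $\Pk(E)$ on which $G=g\alpha$ becomes principal, so that the generically defined map $\tilde g\colon\Pk(E)\setminus Z'\to\Pk(F)$ extends to a genuine proper morphism; then $p_*g^*s(L')$ becomes the direct image of a pullback of $s(L')$, and the projection formula plus the defining formula \eqref{segreclass} for Segre forms finish the job.

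First I would take a modification $\rho\colon\widetilde W\to\Pk(E)$ such that $\rho^*G$ is principal, $\rho^*G=G^0G'$ with $G'$ a non-vanishing section of $\rho^*(p^*F\otimes L^*)\otimes\Lambda^*$ for a line bundle $\Lambda\to\widetilde W$, exactly as in Sections~\ref{ma-produkt}--\ref{twistma}. Equipping $\Lambda$ with the metric induced from $F$ one has $s_1(\Lambda)=dd^c\log|G'|_\circ^2$, and the (twisted version of the) formula \eqref{flax1}, as in the proof of Lemma~\ref{ringlemma}, gives $\la dd^c\log|g\alpha|_\circ^2\ra^\ell=\rho_*\big(s_1(\Lambda)^\ell\big)$; summing over $\ell$ yields $g^*s(L')=\rho_*\big(\sum_\ell s_1(\Lambda)^\ell\big)$. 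The line subbundle $\Lambda\subset\rho^*p^*F$ spanned, where defined, by $\rho^*(sg\alpha)$ corresponds to a morphism $\hat g\colon\widetilde W\to\Pk(F)$ over $X$, extending $\tilde g\circ\rho$ across the exceptional locus and across $\rho^{-1}Z'$, and with the induced metrics one has $\Lambda=\hat g^*L'$; hence $g^*s(L')=\rho_*\hat g^*s(L')$. Since $\hat g$ is proper (a morphism over $X$ from the $X$-proper space $\widetilde W$ to $\Pk(F)$) and $s(L')$ is smooth, the projection formula gives
\[
p_*g^*s(L')=(p\rho)_*\hat g^*s(L')=p'_*\big(\hat g_*\hat g^*s(L')\big)=p'_*\big(s(L')\w \hat g_*\1\big).
\]

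The next step is to compute $\hat g_*\1$, which is a $d$-closed current of order $0$ and bidegree $(m-r,m-r)$, $m=\rank F$, supported on the (irreducible) closure $Y$ of $\hat g(\widetilde W)=\overline{\tilde g(\Pk(E)\setminus Z')}$. Over $p^{-1}(X\setminus Z_0)\setminus Z'$ the map $\tilde g$ sends $(x,[\alpha])$ to $(x,[g(x)\alpha])$; since $g(x)$ is then injective this determines $[\alpha]$, so $\tilde g$, and hence $\hat g$, is generically injective precisely when $g\colon E\to F$ is. If $g$ is not generically injective the generic fibres of $\hat g$ have positive dimension, so $\dim Y<n+r-1=\dim\widetilde W$, i.e.\ $\codim Y>m-r$ in $\Pk(F)$; by the dimension principle a $d$-closed order-$0$ current of bidegree $(m-r,m-r)$ with support in a variety of codimension $>m-r$ vanishes, so $\hat g_*\1=0$ and $p_*g^*s(L')=0$. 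Since $Z=X$ in this case this equals $\1_{X\setminus Z}s(\Im g)=0$. If $g$ is generically injective then $Z=Z_0$, $\hat g$ is birational onto $Y$, so $\hat g_*\1=[Y]$ and $p_*g^*s(L')=p'_*(s(L')\w[Y])$. Over $X\setminus Z_0$, where $\Im g$ is a subbundle of $F$, one has $Y=\Pk(\Im g)$, and $L'$ restricts on $\Pk(\Im g)$ to its own tautological line bundle carrying the metric induced by $F$; hence by \eqref{segreclass}, $p'_*(s(L')\w[Y])=s(\Im g)$ on $X\setminus Z_0$. Finally, $p_*g^*s(L')$ is locally integrable on $X$: composing $\hat g$ with a resolution of $Y$ (or a further modification of $\widetilde W$), which changes nothing since the direct image of $\1$ is $\1$, it is the direct image of a smooth form under a proper holomorphic map, hence locally integrable, exactly as in the proof of the Proposition preceding Lemma~\ref{utvidgning}. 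Two locally integrable currents agreeing off the null set $Z_0$ coincide, so $p_*g^*s(L')=\1_{X\setminus Z_0}s(\Im g)=\1_{X\setminus Z}s(\Im g)$.

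The step I expect to be the main obstacle is the bundle-and-metric bookkeeping behind $g^*s(L')=\rho_*\hat g^*s(L')$ — namely checking that the line bundle produced by the principalization, with its induced metric, really is $\hat g^*L'$, so that the formula \eqref{flax1} and the projection formula can be combined — together with the bidegree/dimension-principle argument forcing $\hat g_*\1=0$ when $g$ is not generically injective. The local-integrability assertion is routine given the earlier results.
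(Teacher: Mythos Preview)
Your argument is correct and takes a genuinely different route from the paper's. The paper treats the two cases separately and more elementarily: when $g$ is generically injective it works directly over $X\setminus Z$, where $\tilde g\colon\Pk(E)\to\Pk(\Im g)$ is a biholomorphism so that $g^*=g^{-1}_*$ and the claim reduces to the defining formula \eqref{segreclass} for $s(\Im g)$; local integrability then forces equality across $Z$. When $g$ is not generically injective the paper does a bare-hands local-coordinate computation on $X\setminus Z_0$: choosing a frame $e_1,\dots,e_r$ for $E$ with $e_r\in\Ker g$, one sees that $g\alpha$ is independent of $\alpha_r$, so $(dd^c\log|g\alpha|_\circ^2)^\ell$ has fibre bidegree at most $(r-2,r-2)$ and is killed by the fibre integration $p_*$. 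Your approach instead resolves the rational map $\tilde g$ to a morphism $\hat g$ and replaces both cases by a single dimension count on $\hat g_*\1$ via the projection formula; this is more conceptual and exposes the underlying geometry (the image cycle $[Y]$), at the price of the bundle bookkeeping you flag at the end. Two small remarks: your identification ``$\Lambda=\hat g^*L'$'' is not quite literal (it is $\Lambda\otimes\rho^*L$ that sits inside $\rho^*p^*F$), but this is harmless since all you actually use is that $\hat g^*s(L')$ is smooth and agrees with $\rho^*(dd^c\log|g\alpha|_\circ^2)$ off the exceptional set, whence $\rho_*\hat g^*s(L')=g^*s(L')$; and the local integrability of $p_*g^*s(L')$ is more immediate than you indicate, since $g^*s(L')$ is already locally integrable on $\Pk(E)$ and $p$ is a proper submersion.
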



\begin{proof}
First assume that $g$ is generically injective. 
 Then $p^{-1}Z$ has positive codimension in $\Pk(E)$ and therefore
$\1_{p^{-1}Z}g^*s(L')=0$. Thus it is enough to prove \eqref{sedan3} in $X\setminus Z$.
There $g\colon E\to \Im g$ is an isomorphism and hence $g\colon \Pk(E)\to \Pk(\Im g)$
is a biholomorphism  and so $g^*=g^{-1}_*$.
Moreover, the restriction of $L'\to \Pk(F)$ to $\Pk(\Im g)$ is the tautological line bundle over $\Pk(\Im g)$;
let us denote this restriction by $L'$. 
Noticing that $p'=p g^{-1}$ we get
$$
p_*g^*s(L')=p_*g^{-1}_* s(L')=(pg^{-1})_* s(L')= (p')_*s(L')=s(\Im g).
$$


%

We now assume that the generic rank of $g$ is $<\rank E$. Then $Z=X$ and so the right hand side of
\eqref{sedan3} vanishes on $X$.  We must ensure that the
left hand side vanishes as well. 
Since $g^*s(L')$ is locally integrable on $\Pk(E)$, $p_* g^*s(L')$ is locally integrable on $X$
and thus it is enough to see that it vanishes
on $X\setminus Z_0$. 
There $\Ker g$ is a subbundle of $E$ of positive dimension.  Let us choose a local frame
$e_1,\ldots, e_{r-1},e_r$ for $E$ so that $e_r$ belongs to $\Ker g$. Then
$E=X\times \C^r_\alpha$, where 
$\alpha=\alpha_1 e_1+\cdots +\alpha_{r-1}e_{r-1}+ \alpha_r e_r$. 
Clearly $g \alpha=g(\alpha_1 e_1+\cdots +\alpha_{r-1}e_{r-1})$. 
In a \nbh of a point on $\Pk(E)$ where, say,  $\alpha_{r-1}\neq 0$, and $g\alpha\neq 0$, we have 
$$
dd^c\log|g\alpha|^2=dd^c\log|g((\alpha_1/\alpha_{r-1}) e_1+\cdots +(\alpha_{r-2}/\alpha_{r-1})e_{r-2}+e_{r-1})|^2.
$$
Locally on $\Pk(E)$, $\alpha'_j=\alpha_j/\alpha_{r-1}$, $j\neq  r-1$, together with $x$ form
a  local system of coordinates, and we see that  
$(dd^c\log|g\alpha|^2)^\ell$ has at most bidegree $(r-2,r-2)$ in $\alpha'$.  
Since $p$ is $(x,[\alpha])\mapsto x$ it follows that the left hand side
of \eqref{sedan3} vanishes. 
\end{proof}

Notice that   
$$
w=\log \big(|g\alpha|^2/|\alpha|^2\big)
$$
is a global function on $\Pk(E)$ which has singularities of logarithmic type
along $Z'$.
We claim that 
\begin{equation}\label{sommar2}
dd^c \big( w s(L) g^*s(L')\big)=s(L)\w \mathring M^{g\alpha}+
g^*s(L')  -s(L) .
\end{equation}
Outside $Z'$ the left hand side of \eqref{sommar2} is
\begin{multline*}
dd^c\Big( w \frac{1}{(1-s_1(L))(1-g^*s_1(L'))}\Big)=\\
\frac{g^*s_1(L')-s_1(L)}{(1-s_1(L))(1-g^*s_1(L'))}=\frac{1}{1-g^*s_1(L')}-\frac{1}{1-s_1(L)}=g^*s(L')-s(L).
\end{multline*}
The only contribution at $Z'$ comes from
the residue term which is  
$$
s(L)\w\1_{Z'} dd^c \frac{ \log|g\alpha|_\circ^2}{1-\langle dd^c\log|g\alpha|_\circ^2\rangle}=
s(L)\w\1_{Z'} \sum_{\ell=1}^\infty [dd^c\log|g\alpha|^2_\circ]^\ell=s(L)\w \mathring M^{g\alpha},
$$
 cf.~\eqref{sedan5}.  
For the last equality we have  used that $Z$ has positive codimension so that $\mathring M_0^{g\alpha}=0$.
Thus \eqref{sommar2} holds.

With a modification $\pi\colon Y\to \Pk(E)$ as in the proof of Lemma~\ref{ringlemma} we see that
 $g^*s(L')=\pi_*s(\La)$ and that $\pi^*w$ locally has the form $\log|\psi^0|^2 + smooth$ on $Y$.
 Thus  
$ws(L) g^*s( L')$  has singularities of logarithmic type
along $Z'$ and hence along $p^{-1}Z$. Therefore
\begin{equation}\label{wdef}
W^g:=p_* (ws(L) g^*s(L'))
\end{equation}
 has singularities of logarithmic  type along $Z$. From 
\eqref{sommar2}, \eqref{sedan3} and Definition~\ref{mdef}  we have
$$
dd^c W^g=p_*(s(L)\w\mathring M^{g\alpha})+p_*g^*s(L')-p_*s(L)=M^g+\1_{X\setminus Z} s(\Im g)-s(E).
$$
Summing up we have proved part (i) of Theorem~\ref{thmett}.

\subsection{Proof of (ii), (iii) and (iv)}
Part (ii) is clear since all definitions and arguments we use are local on $X$. 
Part (iii) is precisely Lemma~\ref{ormen}:

\begin{lma}\label{ormen}
Assume that $g\colon E\to F$ is a morphism and $\pi\colon \widetilde X\to X$ is a
modification. Then we have an induced mapping $\pi^*g\colon \pi^* E\to \pi^*F$ on $\widetilde X$ and
$\pi_* M^{\pi^* g}=M^g$.
\end{lma}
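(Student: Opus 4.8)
The plan is to reduce Lemma~\ref{ormen} to the already-established functoriality of the twisted Monge--Amp\`ere residue currents $\mathring M^{\psi}$ under modifications, i.e.\ to Lemma~\ref{ringlemma} in its twisted form \eqref{struts222}. First I would set up the projectivizations: write $p\colon \Pk(E)\to X$ and $\tilde p\colon \Pk(\pi^*E)\to \widetilde X$. Since $\Pk(\pi^*E)=\pi^*\Pk(E)$ in the obvious sense, there is a natural modification $\tilde\pi\colon \Pk(\pi^*E)\to \Pk(E)$ fitting into the commutative square $p\circ\tilde\pi=\pi\circ\tilde p$, and moreover $\tilde\pi^*L=L_{\pi^*E}$ is the tautological line bundle of $\Pk(\pi^*E)$, compatibly with metrics (the metric on $L_{\pi^*E}$ is the one induced from $\pi^*E$, which is $\tilde\pi^*$ of the metric induced from $E$). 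Hence $\tilde\pi^* s(L)=s(L_{\pi^*E})$ by \eqref{anka1}, applied on $\Pk(E)$.

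Next I would identify the relevant sections. The section $G(x,\alpha)=g(x)\alpha$ of $p^*F\otimes L^*$ pulls back under $\tilde\pi$ to the section $(x,\alpha)\mapsto (\pi^*g)(x)\alpha$ of $\tilde\pi^*(p^*F\otimes L^*)=\tilde p^*(\pi^*F)\otimes L_{\pi^*E}^*$; that is, $\tilde\pi^* G = G_{\pi^*g}$, the analogous section for the morphism $\pi^*g\colon \pi^*E\to \pi^*F$. Moreover the norms match: $|\tilde\pi^*G|_\circ = \tilde\pi^*(|G|_\circ)$ since the metric on $F$ pulls back to the metric on $\pi^*F$ and likewise for $L$. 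Therefore, applying the twisted functoriality \eqref{struts222} of Lemma~\ref{ringlemma} to the section $G$ on $\Pk(E)$ and the modification $\tilde\pi$, we get
\begin{equation}\label{ormhjalp}
\tilde\pi_* \mathring M^{G_{\pi^*g}} = \tilde\pi_*\mathring M^{\tilde\pi^*G} = \mathring M^{G}.
\end{equation}

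Now I would just push forward. Using Definition~\ref{mdef} applied to $\pi^*g$, the projection formula (valid since $\tilde\pi$ is proper and $s(L_{\pi^*E})=\tilde\pi^*s(L)$), \eqref{ormhjalp}, and then Definition~\ref{mdef} applied to $g$:
\begin{equation*}
\pi_* M^{\pi^*g}
= \pi_*\tilde p_*\big(s(L_{\pi^*E})\w \mathring M^{G_{\pi^*g}}\big)
= p_*\tilde\pi_*\big(\tilde\pi^*s(L)\w \mathring M^{G_{\pi^*g}}\big)
= p_*\big(s(L)\w \tilde\pi_*\mathring M^{G_{\pi^*g}}\big)
= p_*\big(s(L)\w \mathring M^{G}\big)
= M^g.
\end{equation*}
In the line-bundle case $r=1$ one argues directly in the same spirit but using \eqref{struts222} for the section $g$ of $F\otimes E^*$ and the smoothness of $s(E)$, without passing to $\Pk(E)$; I would mention this case briefly. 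The one point requiring a little care — and the main (modest) obstacle — is the verification that $\tilde\pi^*G$ really equals $G_{\pi^*g}$ as sections of the \emph{same} twisted bundle and with matching metrics, together with checking that $\tilde\pi$ is genuinely a modification (generically biholomorphic, proper) so that \eqref{struts222} and the projection formula apply; both are straightforward once the identification $\Pk(\pi^*E)=\pi^*\Pk(E)$ and $\tilde\pi^*L = L_{\pi^*E}$ is made explicit.
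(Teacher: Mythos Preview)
Your proof is correct and follows essentially the same approach as the paper: set up the commutative square relating $\Pk(\pi^*E)$ and $\Pk(E)$, identify the pulled-back tautological bundle and the pulled-back section $G$, apply functoriality of $\mathring M$ under modifications, and finish with the projection formula. The only cosmetic difference is that you invoke \eqref{struts222} of Lemma~\ref{ringlemma} directly for the key identity $\tilde\pi_*\mathring M^{\tilde\pi^*G}=\mathring M^G$, whereas the paper re-derives this step by passing to local non-vanishing sections $s$ of $L$ and citing \cite[Example~5.3]{aeswy1}; your version is slightly more streamlined, and you also make the $r=1$ case explicit, which the paper leaves implicit.
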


\begin{proof} 
Let $\widetilde E=\pi^* E$. There is a natural mapping $\hat\pi\colon \hat \Pk(\widetilde E)\to \Pk(E)$ so that 

\begin{equation}\label{dia2}
\begin{array}[c]{cccc}
\Pk(\widetilde E)&  \stackrel{\hat\pi}{\longrightarrow}  & \Pk(E) \\
\downarrow \scriptstyle{\tilde p} & &  \downarrow \scriptstyle{p}  \\
\widetilde X & \stackrel{\pi }{\longrightarrow}  & X
\end{array}
\end{equation}
commutes, and  similarly for $F$.  
The morphism $g\colon E\to F$ induces a morphism $\pi^*g\colon \widetilde E\to \widetilde F$
such that, for $y\in\widetilde X$ and $\alpha\in E_{\pi(y)}$,
\begin{equation}\label{primus}
 \pi^*g(y)\alpha=g(\pi(y))\alpha, \quad y\in\widetilde X, \quad \alpha\in E_{\pi(y)},
 \end{equation}
and 
 \begin{equation}\label{dia3}
\begin{array}[c]{cccc}
p^* E & \stackrel{g}{\longrightarrow}  & p^* F \\
\downarrow \scriptstyle{\hat\pi^*} & &  \downarrow \scriptstyle{\hat\pi^*}  \\
 \tilde p^*\widetilde{E} &  \stackrel{\pi^*g}{\longrightarrow}  & \tilde p^*\widetilde{F} 
\end{array}
\end{equation}
commutes. 
%
%
%
%
%
%
 If $L \to \Pk(E)$ is the tautological line subbundle of $p^*E$, then
$\widetilde L:=\hat\pi^*L$ is the tautological subbundle of 
$\hat\pi^* p^*E=\tilde p^* \pi^*E$, cf.~\eqref{dia2}. In particular,
\begin{equation}\label{snoken}
s(\widetilde L)=\hat\pi^* s(L).
\end{equation}
%
Let $s$ be a local non-vanishing holomorphic section of $L$ on $\Pk(E)$. If in addition $g(x)\alpha \neq 0$
and  $\tilde s=\hat\pi^* s$, then by \eqref{primus}, 
\begin{equation}\label{flotus}
\hat\pi^* (sg\alpha)=\tilde s \hat\pi^*(g\alpha), \quad  \hat\pi^* (g \alpha)=\pi^*g \alpha.
\end{equation}
%
Since $\hat\pi$ is generically $1-1$ it follows from \cite[Example~5.3]{aeswy1} that 
$\hat\pi_* M^{\hat\pi^*(sg\alpha)}=M^{sg\alpha}$ where $s$ is defined. 
From \eqref{snoken}, \eqref{flotus}  and the definition of $\mathring M$, cf.~Section~\ref{twistma}, we conclude that
\begin{equation}\label{primus2}
\hat\pi_*\mathring M^{\hat\pi^*(g\alpha)}=\mathring M^{g\alpha}.
\end{equation}
By \eqref{dia2}, \eqref{snoken}, \eqref{flotus}, and \eqref{primus2} thus
$$ 
\pi_* M^{\pi^*g}=\pi_* \tilde p_*\big(s(\widetilde L)\w \mathring M^{\pi^* g\alpha}\big)= \\ 
p_*\hat\pi_*\big(\hat\pi^* s(L)\w \mathring M^{\pi^* (g\alpha)}\big)=
p_*\big( s(L)\w \mathring M^{g\alpha}\big)=M^g.
$$ 
Thus the lemma is proved.
\end{proof}

The definitions and arguments are not affected if we 
consider $g$ as a morphism $E\to F'$ rather than $E\to F$. Thus (iv) follows.

\subsection{Proof of (v)}
Assume that $g'\colon E'\to F'$ is pointwise injective on $X$. 
Let $p\colon \Pk(E)\to X$ and $\hat p\colon \Pk(E\oplus E')\to X$ be the natural mappings.
Moreover, let 
$$
j\colon \Pk(E)\to \Pk(E\oplus E'), \quad [\alpha]\mapsto [\alpha,0].
$$
We claim that 
\begin{equation}\label{surra}
\mathring M^{g\alpha \oplus g'\alpha'}=j_*\big(p^*s(\Im g') \w \mathring M^{g\alpha}\big).
\end{equation}
 To see \eqref{surra},  
 assume that $U\subset X$ is an open set where $E=U\times \C^r_\alpha$ and $E'=U\times \C^{r'}_{\alpha'}$.
It is enough to prove \eqref{surra} in each set $\U_i=\hat p^{-1}U\cap \{[\alpha,\alpha'], \ \alpha_i\neq 0\}$. Let  $i=1$. Then 
$[\alpha,\alpha']$ is represented by 
$$
(1, \alpha_2/\alpha_1, \cdots, \alpha_r/\alpha_1, \alpha'_1/\alpha_1, \ldots \alpha'_{r'}/\alpha_1).
$$ 
The image of $j\colon \Pk(E)\to \Pk(E\oplus E')$ 
is cut out by the section $g'(x)\alpha'/\alpha_1$ of $\hat p^*\Im g'$ over $\U_1$.  Since $\Im g'$ has the same rank as
the codimension, the normal bundle of the image of $j$ is precisely $\hat p^*\Im g'$.  
From  \cite[Lemma~5.9]{Kalm} we have that 
$$
\hat p^*c(\Im g')\w M^{g\alpha \oplus g'\alpha'}=j_*M^{g\alpha}.
$$
Now $\hat p^*s(\Im g') \w j_*M^{g\alpha}= 
j_*(j^*\hat p^* s(\Im g')\w M^{g\alpha})
$ 
and 
thus \eqref{surra} holds in $\U_1$ since
$p^*=j^*\hat p^*$
In the same way it holds in any $\U_i$, $i=1,\ldots, r$, and  so 
\eqref{surra} is proved.

Let $\hat L$ be the tautological line bundle in
$\hat p^*(E\oplus E')\to \Pk(E\oplus E')$, and recall that 
$$
s(\hat L)=\sum_{\ell=0}^\infty (dd^c\log(|\alpha|^2+|\alpha'|^2)_\circ)^\ell.
$$
Since the pullback to $\{\alpha'=0\}$ of $dd^c\log(|\alpha|^2+|\alpha'|^2)_\circ$ is $dd^c\log|\alpha|^2_\circ$, 
 \eqref{surra} implies that 
\begin{equation}\label{plurr}
s(\hat L)\w \mathring M^{g\alpha \oplus g'\alpha'}=j_*\big(j^*s(\hat L)\w p^*s(\Im g') \w \mathring M^{g\alpha}\big)=
j_*\big(s(L)\w p^*s(\Im g') \w \mathring M^{g\alpha}\big)
\end{equation}
Since  $p_*=\hat p_*j_* $ we get from \eqref{plurr} that 
$$
M^{g\oplus g'}=\hat p_*\big(s(\hat L)\w \mathring M^{g\alpha \oplus g'\alpha'}\big)=
p_*\big(p^*s(\Im g')\w  s(L)\w  \mathring M^{g\alpha}\big)=
s(\Im g')\w M^g.
$$
Thus (v) is proved.

\subsection{Proof of (vi) and (vii)}
If $g'$ is a morphism such that $|g'\alpha|\sim |g\alpha|$, then by  Lemma~\ref{ringlemma}
$\mathring M^{g\alpha}$ and $\mathring M^{g'\alpha}$ define the same class in $\mathcal B(\Pk(E))$. 
It follows that $M^g$ and $M^{g'}$ define the same class in $\mathcal B(X)$. Therefore 
the multiplicities of $M^g$ and $M^{g'}$ at each point $x\in X$ coincide, and are integers. 
Locally, cf.~Example~\ref{trivial},  we can choose metrics so that $M^g$ is a positive current. We conclude that
the multiplicities are non-negative integers. Thus (vi) and (vii) are proved.

 \subsection{Proof of  (viii)}
 Since $M^g_k$ is in $\GZ_{n-k}(X)$, the decomposition~\eqref{king} follows from 
\eqref{siu}.  
The last statement in (viii) requires an additional argument:
Let $Z_i'$ be the subvarieties of $\Pk(E)$  that appear in the decomposition \eqref{siu} of $M^G_\ell$ for various $\ell$.
It is well-known, and follows from Section~\ref{ma-produkt},  that their union is precisely the zero set $Z'$ of $G$. It is clear that 
$p(Z')=Z$. 
Thus it is enough to prove,  for each $Z_i'$, that $[p(Z'_i)]$ appears in the fixed part in 
 \eqref{king} if $p(Z'_i)$ has codimension $k$ in $X$. 
 It is enough to prove this locally on $X$, so we can assume that the metrics are trivial, keeping in mind that
the fixed part only depends on the class of $M^g_k$ in $\mathcal B(X)$.
If $p|_{Z'_i}$ has generic rank $\rho=n-k$  cf.~Section~\ref{kattrank},
then the generic dimension of the fibers $(p|_{Z'_i})^{-1} x$, $x\in p(Z'_i)$,
is $\nu=\dim Z'_i-\rho$.  If locally $E=X\times \C^r_\alpha$, then $p$ is $([\alpha],x)\mapsto x$ and 
$\omega_\alpha =dd^c\log|\alpha|_\circ$  is strictly positive on each fiber. Therefore  
$p_*( \omega_\alpha^\nu \w[Z'_i])$ has support on $p(Z'_i)$, is non-zero, and has bidegree $(k,k)$.
Hence it is $c [p(Z'_i)]$ for some integer $c\ge 1$.  
It follows that  
$$
M^g_k=p_*((s(L)\w \mathring M^{g\alpha})_{k+r-1})= c[p(Z_i)'] +\cdots
$$
where all terms in $\cdots$ are non-negative, cf.~Example~\ref{trivial}. 
 The proof of Theorem~\ref{thmett} is complete.     

\begin{proof}[Proof of Proposition~\ref{avig}]
Assume that $g\colon E\to F$ and $W^g$  are as in
\eqref{main11}. By \eqref{main11} and \eqref{chern0},
\begin{equation}\label{konkurs}
dd^c (c(F) \w W^g)= c(F)\w M^g +c(F)\w \1_{X\setminus Z}s(\Im g) -  c(F)
\end{equation}
since $E$ is trivial so that $s(E)=1$. By Lemma~\ref{utvid} there  is a modification
$\pi\colon\widetilde X\to X$ such that $\Im \pi^*g$ has an extension to a subbundle $H$ of $\pi^* F$. 
In $\widetilde X$ we thus have the pointwise exact sequence
$$
0\to H \to \pi^*F\to \pi^* F/H\to 0.
$$
By \eqref{chern1} there is a smooth form $v$ such that
$dd^c v=c(\pi^* F)-c(\pi^* F/H)\w c(H)$.
Hence 
$$
dd^c (s(H) \w v)=c(\pi^* F)\w s(H)-c(\pi^* F/H).
$$
Applying $\pi_*$ we see that $\1_{X\setminus Z}c(F/\Im g)$ is locally integrable
and closed, and 
\begin{equation}\label{billdal}
dd^c \pi_*(s(H) \w v)=c(F) \w \1_{X\setminus Z}s(\Im g)- \1_{X\setminus Z}c(F/\Im g).
\end{equation}
From \eqref{konkurs} and \eqref{billdal} we see that \eqref{main123} holds
with $V^g=c(F) \w W^g- \pi_*(s(H) \w v)$.
\end{proof}

 \subsection{A remark}
Here is an alternative way to find regularizations of $M^g$. 
Let us introduce the Hermitian norm on $p^*F\otimes L^*\to\Pk(E)$ so that
$
|G|=|g\alpha|/|\alpha|
$
and consider the current $M^G$, cf.~Remark~\ref{kahler} above. 

\begin{lma}\label{snorkel}
For $k=0,1,2,\ldots$ we have the relations
 \begin{equation}\label{snorig}
\langle dd^c\log|g\alpha|^2_\circ\rangle^{k}=\sum_{j=0}^k {{k}\choose{j}} \langle dd^c \log|G|^2\rangle^j \w \omega_\alpha^{k-j}
\end{equation}
and
\begin{equation}\label{resformel}
\mathring M^G_{k+1}= 
\sum_{j=0}^k{{k}\choose{j}} M^G_{j+1}\w \omega_\alpha^{k-j}.
\end{equation}
\end{lma}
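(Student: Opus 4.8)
The key relation to exploit is $dd^c\log|g\alpha|^2_\circ = dd^c\log|G|^2 + \omega_\alpha$, which follows from the definition of the twisted norm: if $s$ is a local non-vanishing section of $L$, then $|sg\alpha|^2 = |sg\alpha|^2$ while $|G|^2 = |g\alpha|^2/|\alpha|^2$, so $dd^c\log|g\alpha|_\circ^2 = dd^c\log|sg\alpha|^2 = dd^c\log|G|^2 + dd^c\log|s\alpha|^2 = dd^c\log|G|^2 + \omega_\alpha$, cf.~\eqref{mortal} in Remark~\ref{kahler}. If these were smooth forms, \eqref{snorig} would be the ordinary binomial theorem; the content is that the identity survives for the generalized Monge-Amp\`ere powers. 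The plan is to prove \eqref{snorig} by induction on $k$, using the recursion \eqref{ma} together with the fact that $\omega_\alpha$ is a smooth \emph{closed} form, so wedging with powers of $\omega_\alpha$ commutes with $dd^c(\cdot\wedge\cdot)$ and with $\1_{X\setminus Z'}$-truncation. Concretely, I would use the regularization \eqref{maepsilon} (or Proposition~\ref{circvprop}): write $\langle dd^c\log|g\alpha|^2_\circ\rangle = \lim_\e (dd^c\log(|G|^2+\e) + \omega_\alpha)$ on $\Pk(E)\setminus Z'$ and expand the $k$-th smooth power binomially before passing to the limit, checking that each term $\langle dd^c\log|G|^2\rangle^j\wedge\omega_\alpha^{k-j}$ is the locally integrable current obtained as the limit of the corresponding smooth term — this is legitimate because $\omega_\alpha$ is smooth and the $\langle dd^c\log|G|^2\rangle^j$ are locally integrable (Section~\ref{ma-produkt}).

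For \eqref{resformel}, I would apply $\1_{Z'}\, dd^c(\log|g\alpha|^2_\circ\,\wedge\,(\cdot))$ to \eqref{snorig} at level $k$ to produce $\mathring M^G_{k+1}$ on the left, cf.~\eqref{spargris}. On the right, since $\omega_\alpha$ is smooth and closed, $dd^c(\log|G|^2\,\langle dd^c\log|G|^2\rangle^{j}\wedge\omega_\alpha^{k-j})$ contributes $[dd^c\log|G|^2]^{j+1}\wedge\omega_\alpha^{k-j}$ plus the term from $\omega_\alpha$ acting on $\log|G|^2$; but $\1_{Z'}$ kills the latter because $\1_{Z'}\langle dd^c\log|G|^2\rangle^j = \1_{Z'}(\text{locally integrable}) = 0$ by positive codimension of $Z'$. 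Hence $\1_{Z'}$ applied to the $j$-th term yields exactly $\binom{k}{j}\, M^G_{j+1}\wedge\omega_\alpha^{k-j}$, giving \eqref{resformel}. One must also use that $dd^c\log|G|^2_\circ = dd^c\log|g\alpha|^2_\circ$ has the same singular support $Z'$ as $dd^c\log|G|^2$, so $M^G_j = \1_{Z'}[dd^c\log|G|^2]^j$ and $\mathring M^G_{k+1} = \1_{Z'}[dd^c\log|g\alpha|^2_\circ]^{k+1}$ are computed on the same set.

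The main obstacle I anticipate is the bookkeeping at the boundary $Z'$: one has to be careful that truncation $\1_{Z'}$, the recursion \eqref{ma}, and the wedge with the smooth closed form $\omega_\alpha$ all interact correctly, i.e.\ that $\1_{Z'}(\omega_\alpha\wedge T) = \omega_\alpha\wedge\1_{Z'}T$ for the relevant generalized cycles $T$ and that no cross terms of the form $dd^c\log|G|^2\wedge(\text{lower Monge-Amp\`ere power})\wedge\omega_\alpha$ are lost or double-counted when the binomial expansion is differentiated. Working on a log-resolution $\pi\colon Y\to\Pk(E)$ where $\pi^*G$ is principal (as in Section~\ref{ma-produkt}), so that $\langle dd^c\log|G|^2\rangle^j = \pi_*s_1(\L)^j$ and everything reduces to manipulations with the smooth forms $s_1(\L)$, $[D]$ and $\pi^*\omega_\alpha$, makes these identities transparent, and I would carry out the induction there and then push down by $\pi_*$.
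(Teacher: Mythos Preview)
Your plan is correct and matches the paper's proof: the paper uses exactly the splitting $\log|g\alpha|^2_\circ=\log|G|^2+\log|\alpha|^2_\circ$ (equivalently $dd^c\log|g\alpha|^2_\circ=dd^c\log|G|^2+\omega_\alpha$), proves \eqref{snorig} by induction via the recursion \eqref{ma}, and in the inductive step obtains \eqref{resformel} and \eqref{snorig} simultaneously by applying $\1_{Z'}$ and $\1_{\Pk(E)\setminus Z'}$, respectively, to the computed expression for $[dd^c\log|g\alpha|^2_\circ]^{k+1}$. Your alternative routes (the $\epsilon$-regularization and the log-resolution) are not needed, but they are valid ways to make the bookkeeping you flag completely transparent.
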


\begin{proof}
Notice that 
\begin{equation}\label{putt5}
\log|G|^2=\log|sg\alpha|-\log|s\alpha|=\log|g\alpha|_\circ-\log|\alpha|^2_\circ
\end{equation}
We proceed by induction.
Notice that the case $k=0$ of \eqref{snorig} is trivial. Assume that it is proved for
some $k$. Together with  
  \eqref{putt5} and the recursion formula for $[dd^c\log|G|^2]^\ell$, cf.~\eqref{ma},  
\begin{multline*}
[dd^c\log|g\alpha|^2_\circ]^{k+1} =dd^c\big((\log|G|^2+\log|\alpha|^2_\circ) \langle dd^c\log|g\alpha|^2_\circ\rangle^{k}\big)=\\
\sum_{j=0}^k{{k}\choose{j}}[dd^c\log|G|^2]^{j+1}\w \omega_\alpha^{k-j} +
\sum_{j=0}^k{{k}\choose{j}}\langle dd^c\log|G|^2\rangle^{j}\w \omega_\alpha^{k-j+1}.
\end{multline*}
If we apply $\1_{Z'}$ to this relation we get \eqref{resformel} for $k+1$. 
If we apply $\1_{\Pk(E)\setminus Z'}$ we get \eqref{snorig} for $k+1$.  
Thus the lemma is proved. 
\end{proof}

There are several formulas for regularization of $M^G_k$.
For instance, see \cite[Proposition~5.7]{aeswy1},
$$
M^G_{k,\e}=\frac{\e}{([G|^2+\e)^{k+1}}(dd^c|G|^2)^k, \quad k=0,1,2,\cdots.
$$
By  
\eqref{resformel} we therefore  get global smooth $M^{g,\e}$ such that $M^{g,\e}\to M^g$. 
 Clearly, $\1_{Z'}[dd^c\log|g\alpha|_\circ^2]^{0}=\1_{Z'}=M^G_0$. 
In view of \eqref{defseg}, Definition~\ref{mdef} and  Lemma~\ref{snorkel} 
there are  non-negative integers $c_{j,k}$ such that 
\begin{equation}\label{resformel2}
M^g_\e=\sum_{k=0}^\infty \sum_{j=0}^\infty c_{j,k} p_*\Big(M^G_{k,\e}\w \omega_\alpha^j 
\Big)
\end{equation}
is a sequence of smooth forms that tends to $M^g$.

\vspace{.5cm} 

\section{Behaviour of $M^g$ under general proper mappings}\label{behaviour}

We have the following extension of Theorem~\ref{thmett}~(iii).

\begin{prop}\label{kakadua}
Let $g\colon E\to F$ be a morphism on $X$. Then $M^g$ induces a mapping $\mu\mapsto M^g\w \mu$
on $\GZ(X)$ and $\mathcal B(X)$ and if $h\colon X'\to X$ is any proper holomorphic mapping, then
\begin{equation}\label{proph}
h_*(M^{h^*g}\w \mu)=M^g\w h_*\mu
\end{equation}
for all $\mu\in \GZ(X')$ and $\mu\in\mathcal B(X')$. 
\end{prop}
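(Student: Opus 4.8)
The plan is to reduce the statement to the projective-bundle picture used to define $M^g$ and then invoke the corresponding functoriality that is already available for the generalized Monge--Amp\`ere currents $\mathring M^{g\alpha}$ on $\Pk(E)$. Recall that $M^g=p_*\big(s(L)\w\mathring M^{g\alpha}\big)$, and that $\mathring M^{g\alpha}=M^{sg\alpha}$ locally, so $\mathring M^{g\alpha}$ is an element of $\GZ(\Pk(E))$; the pairing $\mu\mapsto M^g\w\mu$ on $\GZ(X)$ should be \emph{defined} by first pulling $\mu$ back appropriately to $\Pk(E)$ (using that $p$ is a submersion, so $p^*\mu$ makes sense as a generalized cycle), wedging with $s(L)\w\mathring M^{g\alpha}$ via the product structure on $\GZ(\Pk(E))$ recalled in Section~\ref{gztrams}, and pushing forward by $p_*$. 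The first step is therefore to verify that this gives a well-defined map $\GZ(X)\to\GZ(X)$ and likewise on $\mathcal B(X)$, i.e.\ that it respects the relation $\sim 0$; this uses Lemma~\ref{nolllma} and the fact, recalled in Section~\ref{gztrams}, that wedging with components of Chern forms and pushing forward by proper maps preserve $\GZ$ and are compatible with $\sim$.

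Next I would set up the fibre-square diagram: given the proper $h\colon X'\to X$, form $\Pk(h^*E)$ with projection $p'\colon\Pk(h^*E)\to X'$ and the natural map $\hat h\colon\Pk(h^*E)\to\Pk(E)$, exactly as in the proof of Lemma~\ref{ormen} (diagram~\eqref{dia2}), now for a general proper $h$ rather than a modification. The same computation as there gives $s(\widehat L)=\hat h^*s(L)$ for the tautological bundles and the identity $\hat h^*(g\alpha)=(h^*g)\alpha$ on sections. The one input that genuinely needs the general-$h$ version rather than the birational one is the push-forward identity $\hat h_*\big(\mathring M^{(h^*g)\alpha}\w\hat h^*(\cdot)\big)=\mathring M^{g\alpha}\w(\cdot)$ for the twisted Monge--Amp\`ere currents; for an honest section this is the projection-formula statement that $M^g\w\mu$ is functorial under proper maps, which is precisely the content of the cited results from \cite{aeswy1} and \cite{Kalm} that the paper has already been using (the $\mathring M$ case follows by the local reduction $\mathring M^{g\alpha}=M^{sg\alpha}$ as in Lemma~\ref{ringlemma}).

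With those pieces in hand, the proof of \eqref{proph} is a diagram chase combining: the projection formula for $\hat h$ applied to $\mathring M$; the identity $s(\widehat L)=\hat h^*s(L)$; compatibility of $p^*$ with $\hat h^*$ under the fibre square ($\hat h^*p^*=p'^*h^*$); and the ordinary projection formula $h_*p'_*=p_*\hat h_*$ for the commuting square of proper maps. Concretely,
\begin{equation*}
h_*\big(M^{h^*g}\w\mu\big)=h_*p'_*\big(s(\widehat L)\w\mathring M^{(h^*g)\alpha}\w p'^*\mu\big)
=p_*\hat h_*\big(\hat h^*s(L)\w\mathring M^{(h^*g)\alpha}\w \hat h^*p^*h_*'\text{-adjoint}\big),
\end{equation*}
and pushing $\hat h$ through the $\hat h^*$-factors by the projection formula collapses this to $p_*\big(s(L)\w\mathring M^{g\alpha}\w p^*h_*\mu\big)=M^g\w h_*\mu$. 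The statement for $\mathcal B(X')$ then follows because every operation used (proper push-forward, wedging with Chern-form components, the pullback $p^*$) descends to $\mathcal B$, together with the fact already recorded that $\mathring M^{g\alpha}$ and its variants are well-defined in $\mathcal B$ under comparability.

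The main obstacle I anticipate is the first step: making precise the pullback $p^*\mu$ for a \emph{generalized} cycle (not just a smooth form or an honest cycle) along the submersion $p$, and checking that wedging $s(L)\w\mathring M^{g\alpha}$ with $p^*\mu$ is a legitimate, associative operation within $\GZ(\Pk(E))$ that is symmetric enough to satisfy the projection formula. The cleanest route is probably to avoid $p^*$ altogether on a general $\mu$ and instead represent $\mu=\tau_*\gamma$ with $\tau\colon W\to X$ proper and $\gamma$ a product of Chern-form components, reduce by \eqref{proph}'s own statement (applied to $\tau$) to the case $\mu=1$, i.e.\ to $h_*M^{h^*g}=M^g\w h_*1$ — but $h_*1$ need not be $1$ for a general proper $h$, so one must keep track of $h_*1$ as a generalized cycle and check the two sides still agree, again via the fibre-square projection formula. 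Once that bookkeeping is organized the analytic content is entirely supplied by the already-cited functoriality of $M^{(\cdot)}$ for sections.
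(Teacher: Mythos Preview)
Your overall strategy matches the paper's: define $M^g\w\mu$ via $p_*\big(s(L)\w\mathring M^{g\alpha}\w p^*\mu\big)$, make sense of $p^*\mu$ for a generalized cycle by writing $\mu=\tau_*\gamma$ and using the fibre square over $p$, and then derive \eqref{proph} from the fibre-square diagram together with the functoriality of $\mathring M^{g\alpha}$ under proper maps from \cite[Theorem~5.2]{aeswy1}. That is exactly what the paper does.

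A few points where your sketch diverges from the paper or is imprecise. First, there is no general ``product structure on $\GZ(\Pk(E))$'' that lets you simply wedge $\mathring M^{g\alpha}$ with $p^*\mu$; the paper handles this by using the \emph{smooth} regularizations $M^{g,\e}$ from Proposition~\ref{mreg1}, so that $M^{g,\e}\w\mu=p_*\big(s(L)\w\mathring M^{g\alpha,\e}\w p^*\mu\big)$ is unambiguous, and then passes to the limit by invoking the proof of \cite[Theorem~5.2]{aeswy1}. Second, Lemma~\ref{nolllma} is about vanishing of multiplicities and is not the reason the map descends to $\mathcal B$; that step needs a separate check (which the paper in fact only states and does not carry out). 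Third, your displayed chain of equalities is garbled (the expression ``$\hat h^*p^*h_*'$-adjoint'' is not meaningful); the missing ingredient is the base-change identity $\hat h_*(p')^*\mu=p^*h_*\mu$, which follows directly from the definition $p^*\mu=\tilde\tau_*\tilde p^*\gamma$ once you set up the commuting cube of fibre squares. Finally, your proposed reduction ``apply \eqref{proph} for $\tau$ to reduce to $\mu=1$'' is circular and in any case does not land at $\mu=1$ since $\gamma$ need not be the constant $1$; the paper avoids this by working with the regularization instead.
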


\begin{ex}
If $h$ is a finite mapping, say generically $m$ to $1$, then we can apply \eqref{proph} to the function $\mu=1$. 
It follows that
$
h_*M^{h^*g}=mM^g.
$
\end{ex}


\begin{proof}[Proof of Proposition~\ref{kakadua}]
If  $\tau\colon W\to X$ is proper, then we 
have the commutative diagram
\begin{equation}\label{dia22}
\begin{array}[c]{cccc}
\Pk(\tau^*E)&  \stackrel{\tilde \tau}{\longrightarrow}  & \Pk(E) \\
\downarrow \scriptstyle{\tilde p} & &  \downarrow \scriptstyle{p}  \\
W & \stackrel{\tau }{\longrightarrow}  & X
\end{array}
\end{equation}
In fact, in a local trivialization $E=X\times \C^r_\alpha$ and $\tau^*E=W\times \C_\alpha^r$,
so that $\Pk(E)=X\times \Pk(\C^r_\alpha)$ and $\Pk(\tau^*E)= W\times \Pk(\C^r_\alpha)$. 
Assume that $\gamma$ is a product of first Chern forms and let $\mu=\tau_*\gamma$.
Since $p$ is a proper submersion the pullback $p^*\mu$ exists.  We claim that
\begin{equation}\label{volvo}
p^*\mu=\tilde\tau_*\tilde p^* \gamma.
\end{equation}
The equality \eqref{volvo} means that
\begin{equation}\label{volvo1}
p^*\mu.\xi=\tilde\tau_*\tilde p^* \gamma.\xi, \quad \xi\in\E(\Pk(E)).
\end{equation}
The left hand side of \eqref{volvo1} is, by definition, $\mu.p_*\xi$ which in turn is
$$
\mu(x).\int_\alpha \xi(x,\alpha)= \gamma(w).\int_\alpha \xi(\tau(w),\alpha)
$$
The right hand side is 
$$
\gamma.\tilde p_*\tilde\tau^*\xi=\gamma(w). \int_\alpha \xi(\tau(w),\alpha)
$$
as well. Thus \eqref{volvo} holds. In particular, $p^*\gamma$ is in $\GZ(\Pk(E))$.
%
Since $M^{g,\e}=p_* \big(s(L)\w \mathring M^{g\alpha,\e}\big)$, cf.~Proposition~\ref{mreg1}, 
$$
M^{g,\e}\w \mu=p_*\big(s(L)\w  \mathring M^{g\alpha,\e}\w p^*\mu\big).
$$
Since $p^*\mu$ is in $\GZ(\Pk(E))$  we can take limits, following the proof of \cite[Theorem~5.2]{aeswy1},  and get
\begin{equation}\label{kamel}
M^g\w \mu:=p_*\big(s(L)\w  \mathring M^{g\alpha}\w p^*\mu\big).
\end{equation}
We can extend by linearity to a general $\mu$. It is clear from \eqref{kamel} that this definition only depends on
$\mu$ and not on its representation. 
One must also check that if $\mu'\sim \mu$, then $M^g\w\mu\sim M^g\w\mu'$, but we omit the details.
 The equality \eqref{proph} follows from the corresponding property for $\mathring M^{g\alpha}$,
again  see \cite[Theorem~5.2]{aeswy1}, following the proof of Theorem~\ref{thmett}~(iii) above. 
\end{proof}

\vspace{.5cm}

\section{Vanishing of multiplicities}\label{treproofapa}

\begin{thm}\label{musiu}
Any $\mu\in \GZ_{n-k}(X)$ has a unique decomposition 
\eqref{siu}, where each irreducible component of
$N$ has Zariski support on a set of codimension $\le k-1$. The multiplicities of
$N$ vanish outside an analytic set of codimension $\ge k+1$.
\end{thm}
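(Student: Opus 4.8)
The plan is to decompose $\mu$ into irreducible generalized cycles and sort the components by the codimension of their Zariski supports. By the structure theory recalled in Section~\ref{gztrams} we write $\mu=\sum_i\mu_i$, where each $\mu_i$ is irreducible of dimension $n-k$ with irreducible Zariski support $Z_i$. A nonzero closed current of order zero and bidimension $(n-k,n-k)$ cannot be supported on a set of dimension $<n-k$, so $\codim Z_i\le k$ for every $i$. If $\codim Z_i=k$, then $\mu_i$ is a closed current of order zero carried by the irreducible variety $Z_i$ of pure dimension $n-k$, so by the dimension principle $\mu_i=\beta_i[Z_i]$, where $\beta_i=\mult_x\mu_i$ at a generic point of $Z_i$ is an integer. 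Collecting these terms (and adding up the ones with a common support) gives a cycle $S=\sum_j\beta_j[Z_j]$ of pure codimension $k$, and we put $N:=\sum_{\codim Z_i\le k-1}\mu_i$. Then $\mu=S+N$ and, by construction, every irreducible component of $N$ has Zariski support of codimension $\le k-1$.

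It remains to control the multiplicities of $N$, which is the heart of the matter. Since $\mult_x$ is additive over irreducible components and $N$ is a locally finite sum, it suffices to show: if $\nu\in\GZ_{n-k}(X)$ is irreducible and its Zariski support $V$ has dimension $>n-k$, then $E(\nu):=\{x\,;\,\mult_x\nu\neq 0\}$ is contained in an analytic set of codimension $\ge k+1$; the required set for $N$ is then the union of the $E(\mu_i)$ over the components of $N$. The set $E(\nu)$ is analytic --- the multiplicity function is integer valued and upper semicontinuous in the Zariski topology, cf.\ \cite{aeswy1} --- and, $\nu$ having dimension $n-k$, it has dimension $\le n-k$, a Siu type estimate (for positive currents this is Siu's theorem). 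Suppose, for a contradiction, that $E(\nu)$ has an irreducible component $W$ of dimension exactly $n-k$. Then $\mult_x\nu$ equals a fixed nonzero integer $m$ on a dense Zariski open subset of $W$; since $W$ has codimension $k$ and $\nu$ has dimension $n-k$, a Siu type division argument forces $m[W]$ to occur in the irreducible decomposition of $\nu$, so $\1_W\nu\neq 0$. But $\nu$ is irreducible with support $V\supsetneq W$, hence $\1_W\nu=0$ --- a contradiction. Therefore $\dim E(\nu)\le n-k-1$.

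The main obstacle is this division step, the passage from ``$\mult_x\nu=m$ generically on the codimension $k$ variety $W$'' to ``$m[W]$ occurs in the decomposition of $\nu$''. For effective currents it is Siu's inequality $T\ge\nu(T,x)[W]$ along $W$; in the signed setting it has to be set up within the formalism of \cite{aeswy1}, using that $\mult_x$ depends only on the class in $\mathcal B(X)$ together with an induction on the dimension of the supports of the components (so that no component of strictly larger support dimension contributes to the generic multiplicity of $\nu$ along $W$). Granting the multiplicity statement, uniqueness of \eqref{siu} is formal: if $\mu=S+N=S'+N'$ are two decompositions as in the theorem, then $S-S'$ is a cycle of pure codimension $k$, while $\mult_x(N'-N)$ vanishes outside an analytic set of codimension $\ge k+1$; evaluating $\mult_x$ at a generic (hence smooth) point of an irreducible component of $S-S'$ --- which lies outside that set because the component has codimension $k$ --- shows every coefficient of $S-S'$ vanishes, so $S=S'$ and then $N=N'$.
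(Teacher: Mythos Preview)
Your reduction to irreducible components and the sorting by codimension of the Zariski support are exactly right, and match the paper's setup. The uniqueness argument at the end is also correct.

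The genuine gap is precisely the one you flag yourself: the ``Siu-type division step''. For a signed generalized cycle $\nu$ there is no available result saying that if $\mult_x\nu=m\neq 0$ generically on a codimension-$k$ variety $W$ then $m[W]$ splits off from $\nu$. Siu's theorem gives this for \emph{positive} closed currents, but $\nu$ here need not be positive (and making it positive by choosing trivial metrics is exactly what fails in the applications, cf.\ the discussion around Theorem~\ref{thmtre}). The two auxiliary claims you rely on---Zariski upper semicontinuity of $x\mapsto\mult_x\nu$ and the preliminary bound $\dim E(\nu)\le n-k$---are likewise only justified in the positive case, and are not provided for general elements of $\GZ$ in \cite{aeswy1}. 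So as written, the argument is circular: you are invoking a Siu-type structure theorem to prove a Siu-type structure theorem.

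The paper avoids this entirely by going back to the concrete description of an irreducible $\nu$ with Zariski support $Z$: it is a finite sum of currents $\tau_*\gamma$ with $\tau\colon W\to Z$ proper surjective and $\gamma$ a product of Chern forms on $W$. The key input is then purely geometric. First (Lemma~\ref{surpuppa2}), using the formula $\mult_x\mu\,[x]=\tau_*(M^{\tau^*\xi}_{n-\ell}\wedge\gamma)$ one shows that $\mult_x(\tau_*\gamma)\neq 0$ forces the fibre $\tau^{-1}(x)$ to have dimension at least $r+\ell$, where $r=\dim W-\dim Z$ and $\ell$ is the bidegree on $Z$. Second (Proposition~\ref{surpuppa3}), an elementary induction on $\dim W$ via the rank stratification of $\tau$ shows that the locus $\{x:\dim\tau^{-1}(x)\ge r+\ell\}$ sits in an analytic subset of $Z$ of codimension $\ge\ell+1$. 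Combining these gives the multiplicity vanishing without any positivity or Siu-type input. (As noted in Remark~\ref{gamas}, if $W$ carries a K\"ahler form one \emph{can} recover Proposition~\ref{surpuppa3} from Siu's theorem applied to the positive currents $\tau_*\omega^\ell$; but the paper's argument does not need this.)
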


Since $\mu$ has a unique decomposition in irreducible components, the theorem follows 
from:  

\begin{prop}\label{surpuppa1}
If $\mu\in \GZ_{n-k}(X)$ is irreducible with Zariski support $Z$ and $\codim Z\le k-1$, then $\mult_x\mu$ 
vanish outside an analytic subset of $Z$ of codimension $\ge k+1$.
\end{prop}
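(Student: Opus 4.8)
\textbf{Proof proposal for Proposition~\ref{surpuppa1}.}

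The plan is to reduce the statement to a concrete model via resolution and then use a dimension count for the fibers of the resolving map together with Lemma~\ref{nolllma}. First I would unwind the definition of $\GZ$: an irreducible $\mu\in\GZ_{n-k}(X)$ with Zariski support $Z$ is, up to the usual identifications, of the form $\tau_*\gamma$ where $\tau\colon W\to X$ is proper with $W$ irreducible, $\tau(W)=Z$, and $\gamma=c_{k_1}(E_1)\cdots c_{k_\rho}(E_\rho)$ is a product of components of Chern forms on $W$, of total bidegree $(d,d)$ with $d=k-\codim Z+\dim_{\mathbb C}(\text{fiber})$; more precisely $\dim W=n-k+d$ and the generic fiber of $\tau$ over $Z$ has dimension $\nu:=\dim W-\dim Z=d-\codim Z\ge 1$ (this last inequality is exactly where $\codim Z\le k-1$ enters, since $d\ge \nu\ge 1$ forces $\gamma$ to have positive bidegree along the fibers). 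The key point to extract is that $\gamma$ has \emph{positive} bidegree, i.e. $d\ge 1$, and in fact that its restriction to a generic fiber of $\tau$ is a form of positive degree.

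Next I would compute $\mult_x\mu$ for a point $x\in Z$. By the functoriality of multiplicities under proper push-forward stated in Section~\ref{gztrams} (and the fact that $\mult_x$ only depends on the class in $\mathcal B$, Lemma~\ref{nolllma}), one has $\mult_x\tau_*\gamma = \mult_x\tau_*(\gamma\wedge 1)$, and I want to argue this vanishes whenever the fiber $\tau^{-1}(x)$ has the generic dimension $\nu$. The mechanism: localize near $x$, stratify $Z$ so that over the stratum $Z'$ containing $x$ in its generic part the map $\tau$ is nice, and blow up/slice so that computing the multiplicity amounts to integrating $\gamma$ against a fiber-type cycle of dimension $\nu$. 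Since $\gamma$ restricted to the fibers has positive degree — precisely by Lemma~\ref{nolllma}, $\mult_x(\gamma\wedge\mu')=0$ for the relevant slice class $\mu'$ — the contribution vanishes. Hence $\mult_x\mu\neq 0$ forces the fiber $\tau^{-1}(x)$ to have dimension strictly larger than $\nu$.

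Then I would invoke the semicontinuity of fiber dimension (Section~\ref{kattrank}): the set
$$
A=\{x\in Z : \dim\tau^{-1}(x)\ge \nu+1\}
$$
is a proper analytic subset of $Z$. A standard dimension count for the morphism $\tau|_{\tau^{-1}A}\to A$ — using that $\dim\tau^{-1}A\le\dim W-1$ on the one hand and that each fiber over $A$ has dimension $\ge\nu+1$ on the other, together with Proposition~\ref{katt} applied to the surjection onto (components of) $A$ — gives $\dim A\le \dim W-1-(\nu+1)=\dim Z-2$, i.e. $\codim A\ge k+1$ in $X$. By the previous paragraph $\{\mult_x\mu\neq 0\}\subset A$, which is the desired conclusion.

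The main obstacle I expect is making the middle step rigorous: rigorously reducing ``$\mult_x\tau_*\gamma=0$ when the fiber has generic dimension'' to an application of Lemma~\ref{nolllma}. This requires a careful local model — resolving $Z$ and $W$ simultaneously over a neighborhood of $x$, and expressing $\mult_x$ as a push-forward to a point of $\gamma$ wedged with the class of a generic fiber — so that the positive bidegree of $\gamma$ along the fiber directions literally makes the relevant class in $\mathcal B$ of a point vanish. The bookkeeping of bidegrees under the successive blow-ups, and checking that no lower-dimensional piece of $\gamma$ sneaks in to produce a nonzero number, is the delicate part; everything after that is the routine semicontinuity-and-dimension argument of the last paragraph.
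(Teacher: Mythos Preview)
Your overall architecture matches the paper's: write $\mu=\tau_*\gamma$ with $\tau\colon W\to Z$ proper surjective and $\gamma$ smooth, show that $\mult_x\mu\neq 0$ forces the fiber $\tau^{-1}(x)$ to be large, and then bound the locus of large fibers. But the quantitative bounds you extract are too weak, and the argument as written only proves the case $\codim Z=k-1$.

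Set $r=\dim W-\dim Z$ (the generic fiber dimension) and $\ell=k-\codim Z\ge 1$. Your step ``$\mult_x\mu\neq 0\Rightarrow\dim\tau^{-1}(x)\ge\nu+1$'' is not strong enough: what is actually needed, and what the paper proves (Lemma~\ref{surpuppa2}), is $\dim\tau^{-1}(x)\ge r+\ell$. The paper gets this not through Lemma~\ref{nolllma} but from the concrete formula
\[
\mult_x\mu\,[x]=M^{\xi}_{\dim Z-\ell}\wedge\tau_*\gamma=\tau_*\big(M^{\tau^*\xi}_{\dim Z-\ell}\wedge\gamma\big),
\]
where $\xi$ generates the maximal ideal at $x$; since $\gamma$ is smooth, nonvanishing forces $M^{\tau^*\xi}_{\dim Z-\ell}\neq 0$, hence $\codim_W\tau^{-1}(x)\le\dim Z-\ell$, i.e.\ $\dim\tau^{-1}(x)\ge r+\ell$. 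With only your weak bound, your dimension count yields $\dim A\le\dim Z-2$, so $\codim_X A\ge\codim Z+2$, and this is $\ge k+1$ only when $\codim Z=k-1$; for $\codim Z\le k-2$ it falls short. (Incidentally, your formula $\nu=d-\codim Z$ should read $\nu=d-(k-\codim Z)$, and $\nu$ may well be $0$.)

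There is a second gap in the final step. Even with the correct bound $\dim\tau^{-1}(x)\ge r+\ell$, the estimate ``$\dim\tau^{-1}A\le\dim W-1$ and all fibers have dimension $\ge r+\ell$, hence $\dim A\le\dim Z-\ell-1$'' cannot be read off from a single application of Proposition~\ref{katt}: the preimage $\tau^{-1}A$ is typically reducible, and the large-fiber hypothesis concerns the full fibers of $\tau$, not the fibers of $\tau$ restricted to an irreducible component surjecting onto $A$. The paper handles this by an induction on $\dim W$ (Proposition~\ref{surpuppa3}), passing to the locus in $W$ where $\partial f/\partial w$ drops rank and analyzing its components separately.
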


In view of \cite[Remark~3.10]{aeswy1}, an irreducible $\mu$ as in Proposition~\ref{surpuppa1} is a
finite sum of $(k,k)$-currents $\tau_*\gamma$, where $\tau\colon W\to X$ and $\tau(W)=Z$.
If $\tau=i\circ \tau'$, where $i\colon Z\to X$, then $\mult_x \tau'_*\gamma=\mult_{i(x)}\tau_*\gamma$,
see Section~\ref{gztrams}. 
It is therefore enough to consider a surjective mapping $\tau\colon W\to Z$ and prove that if
$\mu=\tau_*\gamma$ has bidegree $(\ell,\ell)$ on $Z$, $\ell\ge 1$, then 
the subset of $Z$ where $\mult_x\mu\neq 0$ is contained in an analytic subset of codimension
$\ge \ell+1$.  Now Proposition~\ref{surpuppa1} follows from 
Lemma~\ref{surpuppa2} and  Proposition~\ref{surpuppa3} below.


\begin{lma}\label{surpuppa2}
Assume that $\tau\colon W\to Z$ is proper and surjective and $\mu=\tau_*\gamma$ has bidegree $(\ell,\ell)$.  
Let $r=\dim W-\dim Z$.
If $\mult_x\mu\neq 0$, then $\dim \tau^{-1}(x)\ge r+\ell$. 
\end{lma}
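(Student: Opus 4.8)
The statement to prove is Lemma~\ref{surpuppa2}: if $\tau\colon W\to Z$ is proper and surjective, $\mu=\tau_*\gamma$ with $\gamma$ a product of first Chern forms on $W$, $\mu$ has bidegree $(\ell,\ell)$ on $Z$, and $r=\dim W-\dim Z$, then $\mult_x\mu\neq 0$ forces $\dim\tau^{-1}(x)\ge r+\ell$. The idea is to localize at $x$ and compute the multiplicity $\mult_x\mu$ as an integral over the fiber $\tau^{-1}(x)$ (more precisely over the exceptional set of a blow-up), and observe that the integrand is a product of Chern forms of bidegree equal to $\dim\tau^{-1}(x)-(r+\ell-1)$ wedged with a power of a hyperplane class; if this bidegree were negative the multiplicity would automatically be zero.

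\textbf{Main steps.} First I would recall the definition of $\mult_x\mu$ for $\mu\in\GZ(X)$ from \cite{aeswy1}: after pulling back by the blow-up $\sigma\colon \tilde X\to X$ at $x$ with exceptional divisor $D\cong\Pk^{n-1}$, one has $\mult_x\mu=\int_{D}\sigma^*\mu\wedge \omega^{\dim\mu}$ up to the standard normalization, where $\omega=c_1(\Ok(1))$ on $D$; equivalently $\mult_x\mu$ is computed from the Lelong-type number of $\mu$ at $x$, and for $\mu=\tau_*\gamma$ it pulls through the proper push-forward. Second, since the question is about the fiber $F_x:=\tau^{-1}(x)$, I would pass to a modification of $W$ and the diagram
\begin{equation*}
\begin{array}[c]{ccc}
\tilde W & \stackrel{\tilde\sigma}{\longrightarrow} & W\\
\downarrow\tilde\tau & & \downarrow\tau\\
\tilde X & \stackrel{\sigma}{\longrightarrow} & X
\end{array}
\end{equation*}
where $\tilde\tau^{-1}(D)$ dominates $F_x$; one reduces by functoriality of $\mult$ under proper maps (Section~\ref{gztrams}) to computing $\int$ of a product of Chern forms over components of $\tilde\tau^{-1}(D)$ lying above $F_x$. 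Third, and this is the crux, I would count degrees: $\mu=\tau_*\gamma$ has bidegree $(\ell,\ell)$, so $\gamma$ restricted to a generic fiber of $\tau$ has bidegree $(\dim W-\dim Z+\ell,\dim W-\dim Z+\ell)=(r+\ell,r+\ell)$ after accounting for the fiber integration; hence the contribution of $\gamma$ to $\mult_x\mu$ lives on the fiber component, which has dimension $\dim F_x$, and involves a form of bidegree $(r+\ell-1,r+\ell-1)$ there (the $-1$ coming from the projectivization/hyperplane power in the multiplicity formula). If $\dim F_x<r+\ell-1$ this form vanishes for dimension reasons; the borderline $\dim F_x=r+\ell-1$ needs a short separate look but in fact also gives $0$ because the push-forward $\mu$ has no component of dimension $>n-k$ supported at $x$ unless $\dim F_x\ge r+\ell$. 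I would phrase the degree count carefully: the generic fiber of $\tau$ has dimension $r$, so $\tau_*$ drops bidegree by $r$; thus $\gamma$ has bidegree $(\ell+r,\ell+r)$. For $\mult_x\mu\ne 0$ we need a component $V$ of $\tilde\tau^{-1}(D)$ mapping onto (a positive-dimensional piece of) $F_x$ such that $\int_V\tilde\sigma^*\gamma\wedge\omega^{\text{(top)}}\ne 0$, which forces $\dim V\ge \ell+r$, hence $\dim F_x=\dim V\ge \ell+r$ (since $V$ lies over the point $x$ its image is in $F_x$, and $V\subset\tilde\tau^{-1}(D)$ maps generically finitely to $F_x$ after further blow-up, or one simply uses $\dim V\le \dim F_x$... wait, one needs $\dim F_x\ge \dim V - (\text{fiber of } V\to F_x)$; more carefully $V\subset\tilde W$ lies over $D\subset\tilde X$, and $\tilde\sigma(V)\subset \sigma^{-1}(x)$-fiber in $W$ equals $F_x$, so $\dim V\le \dim \tilde\sigma^{-1}(F_x)$ which after a good modification equals $\dim F_x$, giving $\dim F_x\ge \ell + r$).

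\textbf{Expected main obstacle.} The delicate point is making the "degree count" rigorous across the modifications: one must ensure that the only components of $\tilde\tau^{-1}(D)$ that contribute to $\mult_x\mu$ are those dominating pieces of $F_x$ (not components lying over a Zariski-closed proper subset whose dimension could be smaller), and one must control how the hyperplane power in the multiplicity formula interacts with $\sigma^*$ applied to $\gamma$. I would handle this by choosing $\tilde W\to W$ to be a common resolution on which both $\tilde\sigma^{-1}(F_x)$ and $\tilde\tau^{-1}(D)$ are normal crossings, so that all fiber-integration dimension counts become transparent, and then invoking Proposition~\ref{katt} (the rank/surjectivity statement) applied to $V\to F_x$ to conclude $\dim V=\dim F_x$ on the relevant components. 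The rest is bookkeeping with bidegrees and the pushforward formula for $\mult$ under proper maps from Section~\ref{gztrams}.
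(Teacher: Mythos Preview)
Your approach is in the right spirit---reduce to a dimension count over the fiber---but it is far more elaborate than necessary, and the execution has gaps that you yourself flag (``wait, one needs\ldots''). In particular, the claim $\dim V\le \dim F_x$ for a component $V\subset\tilde\tau^{-1}(D)$ is not automatic: after an arbitrary modification $\tilde\sigma\colon\tilde W\to W$ the preimage $\tilde\sigma^{-1}(F_x)$ can have dimension strictly larger than $\dim F_x$, and with the fiber product $\tilde W=W\times_Z\tilde Z$ one has $\tilde\tau^{-1}(D)\cong F_x\times D$, so $\dim V$ can be as large as $\dim F_x+n-1$. Sorting this out correctly would require redoing, in effect, the construction of the multiplicity that the paper already packages for you.

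The paper's argument bypasses the blow-up entirely and is three lines. It uses the formula from \cite[Section~6,~(6.1)]{aeswy1} expressing the multiplicity via the Monge--Amp\`ere residue of the maximal ideal: if $\xi$ is a tuple generating $\mathfrak m_x$ on $Z$ and $n=\dim Z$, then
\[
\mult_x\mu\cdot[x]=M^\xi_{n-\ell}\wedge\tau_*\gamma=\tau_*\big(M^{\tau^*\xi}_{n-\ell}\wedge\gamma\big),
\]
the second equality being the functoriality of $M^\xi$ under proper maps. If this is nonzero then, since $\gamma$ is smooth, $M^{\tau^*\xi}_{n-\ell}\neq 0$. But $M^{\tau^*\xi}_{n-\ell}$ is a closed $(n-\ell,n-\ell)$-current supported on the zero set of $\tau^*\xi$, which is exactly $\tau^{-1}(x)$; the dimension principle for normal currents forces $n-\ell\ge\codim_W\tau^{-1}(x)=\dim W-\dim\tau^{-1}(x)=(n+r)-\dim\tau^{-1}(x)$, i.e.\ $\dim\tau^{-1}(x)\ge r+\ell$.

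So the key lemma you were missing is precisely this intrinsic description of $\mult_x$ as a product with $M^\xi_{n-\ell}$; once you have it, functoriality replaces all of the blow-up bookkeeping, and the dimension principle replaces your degree count.
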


\begin{proof}
Let $n=\dim Z$ and 
let $\xi$ be a tuple that defines
the maximal ideal at $x$. Then, \cite[Section~6, Eq.~(6.1)]{aeswy1}, 
$$
\mult_x\mu [x]= M^\xi_{n-\ell}\w\tau_*\gamma= \tau_*\big(M^{\tau^*\xi}_{n-\ell}\w\gamma\big).
$$
 If this is non-vanishing, then since $\gamma$ is smooth, $M^{\tau^*\xi}_{n-\ell}$ is
non-vanishing. It has support on $\tau^{-1}(x)$ and therefore 
$
n-\ell\ge \codim_W \tau^{-1}(x)=n+r -\dim \tau^{-1}(x).
$
\end{proof}

The following proposition should be well-known but as we did not find a precise reference we 
provide a proof, cf.~Remark~\ref{gamas}. 

\begin{prop}\label{surpuppa3}
If $W$ is irreducible, $f\colon W\to Z$ is a proper surjective mapping and
$r=\dim W-\dim Z$, then
for each $\ell\ge 1$, the set
$$
A^f_{r+\ell}:=\{x; \ \dim f^{-1}(x)\ge r+\ell\}
$$
is contained in an analytic subset of codimension $\ge \ell+1$ in $Z$. 
\end{prop}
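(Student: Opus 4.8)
The plan is to reduce the statement to the classical semicontinuity of fiber dimension and then run a dimension count on the exceptional loci. Recall first the standard stratification of the base by fiber dimension: for a proper surjective map $f\colon W\to Z$ with $W$ irreducible, the sets $A^f_d=\{x\in Z;\ \dim f^{-1}(x)\ge d\}$ are analytic subsets of $Z$ (this is Remmert's semicontinuity theorem, which I will cite as it is what is alluded to in Remark~\ref{gamas}). So $A^f_{r+\ell}$ is automatically analytic; the only real content is the estimate $\codim_Z A^f_{r+\ell}\ge \ell+1$. Since the statement is local on $Z$, I may replace $Z$ by a neighborhood of a given point and, by passing to irreducible components, assume $Z$ is irreducible with $\dim Z=n$, so $\dim W=n+r$.

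Next I would argue by induction on $\ell\ge 1$, or equivalently by descending induction on $\dim A^f_{r+\ell}$; the geometric heart is the following observation. Let $A=A^f_{r+\ell}$ and suppose $\dim A=:s$. Restrict $f$ to $W':=f^{-1}(A)$, which is a (possibly reducible) analytic subset of $W$; over a dense Zariski-open subset of each top-dimensional irreducible component $A_i$ of $A$ the fiber dimension of $f|_{W'}$ is the generic value, call it $d_i\ge r+\ell$ by definition of $A$. Pick a component $W'_j$ of $W'$ dominating $A_i$; then
\[
\dim W'_j \ \ge\ \dim A_i + d_i \ \ge\ s + r+\ell.
\]
On the other hand $W'_j\subsetneq W$ is a proper analytic subset of the irreducible space $W$ of dimension $n+r$, so $\dim W'_j\le n+r-1$. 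Combining, $s+r+\ell\le n+r-1$, i.e. $s\le n-\ell-1$, which is exactly $\codim_Z A\ge \ell+1$. (One must be slightly careful that every component $W'_j$ over $A_i$ has fiber dimension $\ge r+\ell$ along $A_i$, not just one of them; this holds because $A_i\subset A^f_{r+\ell}$ forces every fiber over $A_i$ to have dimension $\ge r+\ell$, and the fiber of $f$ is the union of the fibers of $f|_{W'_j}$.)

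The step I expect to be the main obstacle is the clean passage from the pointwise fiber-dimension bound on $A$ to a lower bound on $\dim W'_j$ for the relevant components — i.e.\ making sure the generic relative dimension of $f|_{W'_j}\colon W'_j\to A_i$ is at least $r+\ell$ and not smaller. The subtlety is that $A^f_{r+\ell}$ is defined by the dimension of the \emph{full} fiber $f^{-1}(x)$, whereas after restricting to a component $W'_j$ one only controls $\dim (f|_{W'_j})^{-1}(x)$. I would handle this by noting that for $x$ in a dense open subset of $A_i$ all components of $f^{-1}(x)$ meeting $W'_j$ have the generic dimension (upper semicontinuity of fiber dimension for $f|_{W'_j}$ gives one inequality, and genericity the other), and that at least one component of $W'$ over $A_i$ must carry a fiber-component of dimension $\ge r+\ell$; choosing $W'_j$ to be such a component completes the count. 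The induction is then not strictly needed — the estimate is self-contained once the dimension bookkeeping is set up — but phrasing it as induction on $\ell$ or on $\operatorname{codim}A$ makes the write-up of the genericity argument cleaner, since one may assume the claim already known on the lower-dimensional strata $A^f_{r+\ell+1}\subset A^f_{r+\ell}$.
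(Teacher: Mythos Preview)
Your argument is correct and is in fact more direct than the paper's. The ``main obstacle'' you flag is real but has a clean resolution: set $C_j:=\{x\in Z:\dim(f|_{W'_j})^{-1}(x)\ge r+\ell\}$, which is analytic by the same semicontinuity you already invoked. Then $A=\bigcup_j C_j$, so the irreducible component $A_i$ satisfies $A_i\subset C_j$ for some $j$; since $C_j\subset f(W'_j)\subset A$ and $f(W'_j)$ is irreducible and contains the component $A_i$, one gets $f(W'_j)=A_i$ and hence \emph{every} fiber of $f|_{W'_j}$ over $A_i$ has dimension $\ge r+\ell$. Now $\dim W'_j\ge \dim A_i+r+\ell$ and $W'_j\subsetneq W$ (because $A\subsetneq Z$, the generic fiber of $f$ having dimension $r$) give the bound. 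No induction is needed.

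The paper takes a different route: it first reduces to $W$ smooth, then proceeds by induction on $\dim W$. Instead of pulling back $A$, it looks at the critical locus $W'=\{w:\partial f/\partial w\ \text{drops rank}\}$, which is a proper subvariety; by the constant rank theorem any fiber component of dimension $>r$ must sit in $W'$, so $A^f_{r+\ell}=\bigcup_j A^{f'_j}_{r+\ell}$ for $f'_j=f|_{W'_j}$. Each $A^{f'_j}_{r+\ell}$ is then handled by the induction hypothesis together with Proposition~\ref{katt}. Your approach is shorter and more conceptual, at the price of citing Remmert's theorem up front for the analyticity of $A^f_{r+\ell}$; the paper's approach is more self-contained and builds the analytic containing set by hand.
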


\begin{proof}[Proof of Proposition~\ref{surpuppa3}]
We can assume that $W$ is smooth, because otherwise we  take a regularization $\pi\colon W'\to W$ and
consider $f'=f\circ\pi$, noticing that 
$$
\{x; \ \dim f^{-1}(x)\ge r+\ell\}\subset \{x; \ \dim (f\circ\pi)^{-1}(x)\ge r+\ell\}.
$$

We proceed by induction over $\dim W$.
Assume that the proposition holds for all  $W$ with dimension $\le m$ and  $r$ such that $0\le r\le \dim W$,
and that our $W$ has dimension $m+1$. 
We first consider the case when $r=m+1$. Then all the sets $A^f_{r+\ell}$ for $\ell\ge 1$ are empty.
Thus we can assume from now on that $r\le m$. 
Notice that the set $W'\subset W$ where $\partial f/\partial w$ does not have optimal rank 
is analytic of dimension $\le m$.  

Moreover, observe that if $w\in W\setminus W'$, then $\partial f/\partial w$ has the same rank
in a \nbh of $w$ so by the 
constant rank theorem, there is a \nbh $U$ of $w$ such that 
$f^{-1}(f(w))\cap U$ has dimension $r$. 

Let    
$W'_j$ be the irreducible components of $W'$ and let
$f'_j$ be the restriction of $f$ to $W_j'$ so that
$f_j'\colon W'_j\to f(W'_j)$.
Since $f$ is proper, each $f(W'_j)$ is an analytic set.
We claim that
\begin{equation}\label{tyll}
A^f_{r+\ell}=\cup_j A^{f_j'}_{r+\ell}.
\end{equation}
In fact, assume that $f^{-1}(x)$ has an irreducible component $V$ of dimension $\ge r+\ell$. 
From the observation above it follows that a generic point on $V$ belongs to $W'$,
and hence $V$ is contained in $W'$. 
Thus 
$V=\cup _j V\cap W_j'.$
It follows that at least one of the analytic sets $V\cap W_j'$ has dimension $\ge r+\ell$. 
Thus $(f'_j)^{-1}(x)$ has dimension $\ge r+\ell$ so that $x\in A^{f'_j}_{r+\ell}$. 
Now \eqref{tyll} follows.

 \smallskip
In view of \eqref{tyll} it is enough to 
consider each $A^{f_j'}_{r+\ell}$. 
Assume that  $(f'_j)^{-1}(x)$ has generic dimension $r+\ell'$.
By definition then 
$$
\rank f'_j=\dim W'_j-r-\ell'
\le \dim W-1-r-\ell'=\dim W-1 -(\dim W-\dim Z)- \ell'=\dim Z-\ell' -1.
$$
Proposition~\ref{katt}  implies that 
\begin{equation}\label{katta}
\codim f'_j(W'_j) \ge \ell'+1.
\end{equation}

First assume that $\ell'\ge \ell$. Since $A^{f_j'}_{r+\ell}\subset f'_j(W'_j)$, by \eqref{katta},  
$$
\codim A^{f_j'}_{r+\ell}\ge \ell'+1\ge \ell+1
$$
as desired. 
Now assume that $\ell'<\ell$.  Since
$$
A^{f'_j}_{r+\ell}=A^{f'_j}_{r+\ell' +\ell-\ell'}
$$
it follows from the induction hypothesis that $A^{f'_j}_{r+\ell}$ is contained in an analytic subset of $f'_j(W'_j)$ 
of codimension $\ge \ell-\ell'+1$.  In view of \eqref{katta} we conclude that this analytic set has at least codimension
$\ell-\ell'+1+\ell'+1=\ell+2$ in $Z$.
%
%
%
Thus Proposition~\ref{surpuppa3} is proved.
\end{proof}

\begin{remark}\label{gamas}
If $\gamma$ in the proof of Lemma~\ref{surpuppa2} is strictly positive, then 
the multiplicity is strictly positive if and only
if $\dim \tau^{-1}(x)\ge r+\ell$.
If $W$ in Proposition~\ref{surpuppa3} has a K\"ahler form $\omega$, then $\gamma_\ell=\omega^\ell$ are strictly positive 
closed forms for $1\le \ell \le \dim W$. 
In this case therefore  Proposition~\ref{surpuppa3} follows from Siu's theorem applied to the 
positive closed currents  $f_*\gamma_\ell$.
\end{remark}


\vspace{.5cm}

\section{An extension of Theorem~\ref{thmB}}\label{plakat}

Let $g\colon E\to F$ be a morphism and let $a\colon s(E/\Ker g)\to \Im g$ be the induced isomorphism
over $X\setminus Z_0$.  Here is an extended version of Theorem~\ref{thmB}.

\begin{thm}\label{thmtre}
The natural extensions  $\1_{X\setminus Z_0}s(E/\Ker g)$ and $\1_{X\setminus Z_0}s(\Im g)$ are locally integrable 
and closed in $X$, and there is a current $M^a$, which is locally a generalized cycle,  with support on
$Z_0$ such that the following holds:

\smallskip
\noindent
(a) There is a current $W^a$ with singularities of logarithmic type along $Z_0$ such that
\begin{equation}\label{trelikhet1}
dd^c W^a=M^a+\1_{X\setminus Z_0}s(\Im g) -\1_{X\setminus Z_0}s(E/\Ker g).
\end{equation}
The analogue of  Theorem~\ref{thmett}~(ii) holds for $M^a$.  

\smallskip
\noindent
(b) If $\pi\colon\widetilde X\to X$ is a modification, and 
$M^{\pi^* a}$  denotes the current obtained from $\pi^*g$, then 
\begin{equation}\label{kamomilla}
\pi_* M^{\pi^*a}=M^a.
\end{equation}

\smallskip
\noindent
(c) If $\Ker g$ has an extension to a subbundle $N$ of $E$, and $a'$ is the induced extension to a morphism
$a'\colon E/N\to F$, then 
$
M^a=M^{a'},
$
where $M^{a'}$ is the current in Theorem~\ref{thmett}.

\smallskip
\noindent
(d)  All  multiplicities   $\mult_x M^a_k$ are integers.
There is a unique decomposition of the form \eqref{king},  where
$\mult_x N^a_k$ vanishes outside an analytic set of codimension $\ge k+1$.
All the coefficients $\beta_j^k$ are integers. 
If $g$ and $\hat g$ are comparable,
then the associated $M_k^a$ and $M_k^{\hat a}$ have the same multiplicities. 
\end{thm}

\noindent  Some of the multiplicities $\mult_x M_k^a$ and coefficents $\beta_j^k$ may be negative,  
see Example~\ref{katt2}. 

\smallskip\noindent
If $\pi\colon \widetilde X\to X$ is a modification such that $\pi^* \Ker g$ has an extension as a subbundle $\widetilde N$ 
over $\widetilde X$, such a modification always exists at least locally,  and $\tilde a$ denotes the induced
morphism $\pi^* E/\widetilde N\to  \pi^*F$, then 
$M^a=\pi_* M^{(\pi^* a)'}$
in view of (b) and (c) above.  Thus $M^a$ is determined by these properties.

\smallskip 
Although $a$ is only defined on $X\setminus Z_0$,  we can define smooth forms 
$M^{a,\e}$ on $X$ by \eqref{premium}.

 \begin{prop}\label{kork}
 The limit
\begin{equation}\label{adef} 
M^a:=\lim_{\e\to 0} M^{a,\e}
\end{equation}
exists and is independent of the choice of $\chi$ in \eqref{premium}. 
\end{prop}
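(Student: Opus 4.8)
The plan is to transport the problem, by means of a modification, to the situation of a genuine generically injective bundle morphism, where Proposition~\ref{mreg2} already supplies the convergence for an arbitrary admissible cutoff $\chi$. Since the statement is local on $X$, I would fix a point of $Z_0$ and invoke Lemma~\ref{utvid} to choose, on a neighbourhood, a modification $\pi\colon\widetilde X\to X$, which we may take to be an isomorphism over $X\setminus Z_0$ (over there $\Ker g$ is already a subbundle), such that $\Ker\pi^*g$ extends to a holomorphic subbundle $\widetilde N$ of $\pi^*E$. Giving $\widetilde Q:=\pi^*E/\widetilde N$ the quotient metric, the morphism $\pi^*g$ factors through a morphism $\tilde a\colon\widetilde Q\to\pi^*F$ which is injective outside $\pi^{-1}Z_0$; thus $\tilde a$ is generically injective with non-injectivity locus $\pi^{-1}Z_0$, and Proposition~\ref{mreg2} applies to it.

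First I would identify the forms $M^{a,\e}$ of \eqref{premium} with $\pi_*M^{\tilde a,\e}$, where $M^{\tilde a,\e}$ are the regularizations \eqref{premium} attached to the bundle morphism $\tilde a$ on $\widetilde X$. Over $X\setminus Z_0$ the map $\pi$ is a biholomorphism under which $(\widetilde Q,\text{quotient metric})$ corresponds to $(E/\Ker g,\text{its metric})$, hence so do all the ingredients of \eqref{premium} — the tautological line bundle on $\Pk(\widetilde Q)\cong\pi^*\Pk(E/\Ker g)$, the section $\tilde a\alpha$, and the cutoff $\chi_\e$ — so that $M^{a,\e}$ and $\pi_*M^{\tilde a,\e}$ agree there. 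By Proposition~\ref{mreg2} the smooth form $M^{\tilde a,\e}$ vanishes in a neighbourhood of $\pi^{-1}Z_0$, so $\pi_*M^{\tilde a,\e}$ vanishes near $Z_0$; since $M^{a,\e}$ does as well, the two forms coincide on all of $X$ — in particular $M^{a,\e}$ is smooth on $X$.

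Now Proposition~\ref{mreg2}, applied to $\tilde a$ for an arbitrary admissible $\chi$, gives that $M^{\tilde a,\e}\to M^{\tilde a}$ as $\e\to0$, where $M^{\tilde a}$ is the current of Definition~\ref{mdef} — a limit visibly independent of $\chi$. Since $\pi$ is proper, push-forward of currents is continuous for the weak topology, so
\[
M^{a,\e}=\pi_*M^{\tilde a,\e}\longrightarrow \pi_*M^{\tilde a}\qquad (\e\to0).
\]
This shows that the limit \eqref{adef} exists and equals $\pi_*M^{\tilde a}$, independently of the choice of $\chi$. Because the forms $M^{a,\e}$ are defined on $X$ without reference to $\pi$, uniqueness of limits also gives that $\pi_*M^{\tilde a}$ does not depend on the chosen modification — a fact that otherwise reappears, from the functoriality in Theorem~\ref{thmett}~(iii), as part of Theorem~\ref{thmtre}.

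The only step that really needs care is the first one: checking that \eqref{premium}, read off for the morphism $a$ that is only generically defined, genuinely yields smooth forms on all of $X$ and matches $\pi_*M^{\tilde a,\e}$. Once the problem has been moved to the honest bundle morphism $\tilde a$ on $\widetilde X$, the existence of the limit is nothing but Proposition~\ref{mreg2}, and the $\chi$-independence is already contained in that statement.
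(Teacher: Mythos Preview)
Your proof is correct and follows essentially the same route as the paper's own argument: pass locally to a modification $\pi$ on which $\Ker\pi^*g$ extends to a subbundle, factor $\pi^*g$ through a genuinely generically injective morphism $\tilde a$ (the paper's $a'$), identify $M^{a,\e}=\pi_*M^{\tilde a,\e}$, and then use Proposition~\ref{mreg2} together with continuity of $\pi_*$ to conclude. The paper compresses the identification step into the phrase ``by Lemma~\ref{ormen}, and its proof'', whereas you spell it out via the biholomorphism over $X\setminus Z_0$ and the vanishing of both sides near $Z_0$; this is the same mechanism.
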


If the subbundle $\Ker g\subset E$
defined in $X\setminus Z_0$ happens to have an extension as a subbundle $N$
of $E$ over $X$, then by continuity $N\subset \Ker g$ and therefore $g$ induces a 
morphism $a\colon E/N\to F$. 
By  Proposition~\ref{mreg2} then \eqref{adef} is consistent with the previous
definition of $M^a$.

\begin{proof}
Assume that $\pi\colon X'\to X$ is a modification such that the subbundle $\pi^*N\subset \pi^* E$  
on $X'\setminus \pi^{-1}Z_0$ extends to a subbundle $N'$ of $E'=\pi^*E$ on $X'$
Let $g'=\pi^*g\colon E'\to F'=\pi^* F$. Then 
$N'\subset \Ker g'$ and so $g'$ induces a generically injective mapping
$a'\colon E'/N'\to F'$.  Thus $M^{a'}$ is a well-defined current on $X'$. By
Lemma~\ref{ormen}, and its proof,
$M^{a,\e}=\pi_* M^{a',\e}$
and so
\begin{equation}\label{blurp}
M^a=\pi_* M^{a'}.
\end{equation}
In particular, it is independent of the choice of $\chi$. 
At least locally in $X$ such a modification $\pi$ exists, cf.~Section~\ref{extsub}, and thus the proposition is proved.
\end{proof}

\begin{proof}[Proof of Theorem~\ref{thmtre}]
Let $W^a$ be the form \eqref{wdef} but associated with $a$ rather than $g$ in $X\setminus Z_0$. Then 
$W^{a,\e}:=\chi_\e W^a$ is well-defined in $X$ for each $\e>0$. We claim that 
\begin{equation}\label{wa}
W^a:=\lim_{\e\to 0}W^{a,\e}
\end{equation}
exists.  To see this let $\pi\colon X'\to X$ be a modification as in the previous proof. If $\chi'_\e=\pi^*\chi_\e$, then 
$W^{a',\e}=\chi'_\e W^{a'}=\pi^* W^{a,\e}$.
Thus  
$W^{a,\e}=\pi_*W^{a',\e}$  and hence the limit \eqref{wa} exists and 
\begin{equation}\label{wa1}
W^a=\pi_*W^{a'}. 
\end{equation}
By Theorem~\ref{thmett}~(i)  we have 
\begin{equation}\label{polyp}
dd^cW^{a'}=M^{a'} +s(\Im g')-s(E'/N').
\end{equation}
Notice that 
$\1_{X\setminus Z_0}s(E/\Ker g)=\1_{X\setminus Z_0}s(E/N)=\pi_*s(E'/N')$ and 
$\1_{X\setminus Z_0}s(\Im g)=\pi_*s(\Im a')$ are locally integrable and closed.
Taking $\pi_*$ now  \eqref{trelikhet1}  follows from 
 \eqref{blurp}, \eqref{wa1}  and \eqref{polyp}.
Part (b) of the theorem follows in a standard way by choosing a $\pi$ as above which in addition factorizes over 
$\widetilde X$.
We omit the details.
Part (c) follows from the proof of (a).

Since $M^a$, at least locally, is a generalized cycle, all its multiplicities are integers, 
and we have the unique decomposition \eqref{king}, cf.~Section~\ref{gztrams}.

 If $g$ and $\hat g$ are comparable,  then $\pi^* g$ and $\pi^*\hat g$ are comparable in $X'$ and
hence the associated $a'$ and $\hat a'$ are comparable in $X'$.  It follows from the proof
of Theorem~\ref{thmett}~(vii) that 
$M^{a'}$ and $M^{\hat a'}$ belong to the same class in $\mathcal B(X')$. In view of \eqref{blurp} therefore 
$M^a_k$ and $M^{\hat a}_k$  belong to the same $\mathcal B$-class and hence they have 
the same multiplicities.  
Thus Theorem~\ref{thmtre} is proved.
\end{proof}

\begin{remark}
The non-negativity of the multiplicities in Theorem~\ref{thmett} was proved by locally 
choosing  trivial metrics locally on $X$ on $E$ and $F$.  This argument breaks down
for $M^a$ since it is the push-forward of
$M^{a'}$ under a  modification, and in general one cannot choose a metric locally on $X$ so that 
$M^{a'}$ is non-negative on  the exceptional divisor, cf.~Example~\ref{katt3}.
\end{remark}


 

 \vspace{.5cm}
 
\section{Chern and Segre forms associated with certain singular metrics}\label{BC-section} 
Singular metrics on line bundles have played a fundamental role in algebraic geometry during the last decades,
starting with \cite{Dem1}.
Singular metrics on a higher rank bundle were introduced in \cite{BePa}, see also \cite{Cataldo}, and have been studied by several authors since then, 
e.g., in \cite{Hos} and \cite{XM}. In \cite{Raufi} and later on in \cite{TI, LRS, LRSW} are introduced associated Chern forms.
In \cite{LRS} quite general singular metrics are allowed, but there are restrictions on the degrees.
In \cite{LRSW}  the whole Chern forms for metrics with analytic singularities are
defined; however in situations that go beyond  \cite{LRS} an a~priori choice of a smooth metric form is needed.  
These Chern forms are as expected where the metric is non-singular
and represent the de~Rham cohomology classes. 
We will use Theorems~\ref{thmett} and \ref{thmtre} to provide Chern and Segre forms,
that in addition represent the expected Bott-Chern classes, 
for two classes of singular metrics.

\begin{df}\label{platt}
Let $\hat F\to X$ be a holomorphic vector bundle with a metric that is non-singular outside an analytic set $Z$ of positive codimension. 
We say that a current  $s(\hat F)$ on $X$
is a Segre form for $\hat F$ if it represents the
Bott-Chern class  $\hat c(F)$ and is equal to the Segre form defined by the metric where it is non-singular.  We have the analogous definition of 
$c(\hat F)$.
\end{df}


\begin{ex}\label{lrsw}
Let $E$ and $F$ be Hermitian vector bundles and $g\colon E\to F$ a holomorphic morphism. 
Let $\hat E$ be $E$ but equipped by the singular metric so that
$|s|_{\hat E}=|gs|$.  It was proved in \cite{LRSW} that,
  in our notation,  the current
$$
s(\hat E)=M^g+\1_{X\setminus Z}s(\Im g),
$$
defines the same de~Rham cohomology class as $s(E)$. 
Theorem~\ref{thmett}~(i) states that it in fact defines the same Bott-Chern class, 
so that $s(\hat E)$ is a Segre form for $\hat E$ in the sense of Definition~\ref{platt}.
If $g$ is generically surjective it follows from the proof of Proposition~\ref{avig} that  
\begin{equation}\label{kvast}
c(\hat E)=-c(E)c(F)M^g+c(F) 
\end{equation}
is a Chern form for $\hat E$.  
Notice that the multiplicities of $s(\hat E)$ and $-c(\hat E)$ coincide and are independent of the smooth metrics on $E$ and $F$.
\end{ex}

\begin{remark}
One can obtain an analogue of \eqref{kvast} for an arbitrary $g$; for simplicity though we assume that $Z$ has positive codimension.
Using the ideas in the proof of Proposition~\ref{hummer} below one can define a current
$M^{g,b}$  and a locally integrable $V^g$ such that
$dd^c V^g=-M^{g,b}+\1_{X\setminus Z}c(\Im g)-c(E)$,  so that
$c(\hat E)=-M^{g,b}+\1_{X\setminus Z}c(\Im g)$ is a Chern form for $\hat E$.
%
\end{remark}

In our second example we assume that
 $g\colon E\to F$ is a generically surjective morphism, $E$ and $F$ Hermitian vector bundles,  and we let $\hat F$ be $F$ but equipped with the singular metric induced from $E$. 
That is, for $\beta\in F$ and $x\in X\setminus Z_0$,  
$|\beta|_{\hat F}=|g^{-1}\beta|_{E/\Ker g}$.  
Then clearly $\hat F$ is isometric to $E/\Ker g$ in $X\setminus Z_0$ so that $s(\hat F)=s(E/\Ker g)$  and $c(\hat F)=c(E/\Ker g)$ there. 

\begin{prop}\label{hummer}
With the notation in Theorem~\ref{thmtre},  
\begin{equation}\label{fhat}
s(\hat F)=\1_{X\setminus Z_0}s(E/\Ker g)-M^a
\end{equation}
is a Segre form for $\hat F$. There is a  related current  $M^{a,c}$ with support on $Z_0$
such that
\begin{equation}\label{fhat1}
c(\hat F)=\1_{X\setminus Z_0}c(E/\Ker g)+ M^{a,c}
\end{equation}
is a Chern form for $\hat F$.
The multiplicities of $M^a$ and $M^{a,c}$ are independent of 
the smooth metrics on $E$ and $F$. 
\end{prop}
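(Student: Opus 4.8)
The plan is to read off the statement about $s(\hat F)$ directly from Theorem~\ref{thmtre}, and then to obtain the statement about $c(\hat F)$ by transporting the very same argument to Chern forms, using a modification on which $\Ker g$ extends to a genuine subbundle. For the Segre form: since $g$ is generically surjective we have $\Im g=F$ on $X\setminus Z_0$ while $Z_0$ has positive codimension, so $\1_{X\setminus Z_0}s(\Im g)=s(F)$, the smooth Segre form of $F$. Feeding this into \eqref{trelikhet} gives
\[
s(\hat F):=\1_{X\setminus Z_0}s(E/\Ker g)-M^a=s(F)-dd^c W^a .
\]
Thus $s(\hat F)$ represents the same Bott--Chern class as the smooth form $s(F)$; moreover $M^a$ has support on $Z_0$ and $\hat F$ is isometric to $E/\Ker g$ on $X\setminus Z_0$, so there $s(\hat F)=s(E/\Ker g)$ is the genuine Segre form of the singular metric. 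By Definition~\ref{platt}, $s(\hat F)$ is a Segre form for $\hat F$.

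For the Chern form I would pick a modification $\pi\colon\widetilde X\to X$, which we may take to be an isomorphism over $X\setminus Z_0$, such that $\pi^*\Ker g$ extends to a subbundle $N$ of $E':=\pi^*E$; such $\pi$ exists at least locally on $X$ by Section~\ref{extsub}, and independence of the choice is handled as in Theorem~\ref{thmtre}(b). Put $F':=\pi^*F$ and let $a'\colon E'/N\to F'$ be the morphism induced by $\pi^*g$; it is an isomorphism over $\widetilde X\setminus\pi^{-1}Z_0$, and there $\widehat{F'}$ is isometric to $E'/N$. By Theorem~\ref{thmett}(i) applied to the generically-iso morphism $a'$ (cf.~Corollary~\ref{ansgar1}) there is a current $W^{a'}$ of logarithmic type with $dd^c W^{a'}=M^{a'}+s(F')-s(E'/N)$. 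I would then multiply this identity by the smooth closed form $c(F')\wedge c(E'/N)$ and use $c(F')\wedge s(F')=1=c(E'/N)\wedge s(E'/N)$ from \eqref{chern0} to obtain
\[
dd^c\big(c(F')\,c(E'/N)\,W^{a'}\big)=c(F')\,c(E'/N)\,M^{a'}+c(E'/N)-c(F') .
\]
Hence, with $M^{a',c}:=-c(F')\,c(E'/N)\,M^{a'}$ (a generalized cycle with support on the non-injectivity locus of $a'$, which lies in $\pi^{-1}Z_0$) and $V^{a'}:=-c(F')\,c(E'/N)\,W^{a'}$, one has $dd^c V^{a'}=M^{a',c}+c(F')-c(E'/N)$. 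Applying $\pi_*$, and using $\pi_*c(F')=c(F)$ together with the fact that $\pi_*c(E'/N)=\1_{X\setminus Z_0}c(E/\Ker g)$ is locally integrable and closed (exactly as for the Segre form in Section~\ref{extsub}), I would set $M^{a,c}:=\pi_*M^{a',c}$, which has support on $Z_0$, and $V^a:=\pi_*V^{a'}$, so that
\[
c(\hat F):=\1_{X\setminus Z_0}c(E/\Ker g)+M^{a,c}=c(F)-dd^c V^a .
\]
As before this represents the Bott--Chern class of $c(F)$ and equals $c(E/\Ker g)$ off $Z_0$, so it is a Chern form for $\hat F$.

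For the metric independence: by Theorem~\ref{thmett}(vi)--(vii) the multiplicities of $M^{a'}$ depend only on its class in $\mathcal B(\widetilde X)$, which is unaffected by changing the smooth metrics on $E$ and $F$ (these only replace the relevant norms by locally comparable ones), hence so are the multiplicities of $M^a=\pi_*M^{a'}$. Since $c(F')\,c(E'/N)=1+(\text{terms of positive degree})$, Lemma~\ref{nolllma} gives $\mult_x M^{a,c}_k=-\mult_x M^a_k$, so the multiplicities of $M^{a,c}$ are metric independent as well (and are, up to sign, those of $M^a$). I expect the only step needing genuine care to be the Chern part, namely reproducing the extension-across-$Z_0$ machinery for $c$ rather than $s$; the point of the argument above is that this is achieved simply by multiplying the Segre identity on $\widetilde X$ by smooth closed forms, after which the push-forward and the independence of the choice of $\pi$ go through verbatim as for $M^a$ and $W^a$ in Section~\ref{plakat}.
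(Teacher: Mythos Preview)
Your argument follows the paper's almost exactly: read off the Segre statement from \eqref{trelikhet} using $\1_{X\setminus Z_0}s(\Im g)=s(F)$, then pass to a modification on which $\Ker\pi^*g$ extends, multiply the identity $dd^cW^{a'}=M^{a'}+s(F')-s(E'/N)$ by the smooth closed form $c(F')\,c(E'/N)$, and push forward. The only cosmetic difference is your sign convention $M^{a',c}=-c(F')\,c(E'/N)\,M^{a'}$ versus the paper's $M^{a',c}=c(F')\,c(E'/N')\,M^{a'}$; either works.

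There is, however, a gap in your last paragraph. You invoke Lemma~\ref{nolllma} to conclude $\mult_x M^{a,c}_k=-\mult_x M^a_k$ on $X$. Lemma~\ref{nolllma} says $\mult_y(\gamma\wedge\mu)=0$ for $\gamma$ of positive bidegree \emph{on the same space}. That gives $\mult_y M^{a',c}_k=-\mult_y M^{a'}_k$ on $\widetilde X$, but this does \emph{not} transfer through $\pi_*$: multiplicity is not preserved by pushforward (e.g.\ the exceptional divisor of a blow-up has multiplicity $1$ everywhere yet pushes forward to $0$), and $M^{a,c}$ is not of the form $\gamma\wedge M^a$ for a smooth $\gamma$ on $X$ since $c(E'/N)$ is not a pullback. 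The same remark applies to your sentence ``hence so are the multiplicities of $M^a=\pi_*M^{a'}$'': equality of multiplicities upstairs does not by itself give equality of multiplicities downstairs.

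The fix is the paper's: argue at the level of $\mathcal B$-classes. Changing the smooth metrics on $E,F$ replaces $M^{a'}$ by a current in the same class in $\mathcal B(\widetilde X)$ (Theorem~\ref{thmett}(vii)) and replaces $c(F')$, $c(E'/N)$ by forms differing by $B$-forms; hence the class of $M^{a',c}$ in $\mathcal B(\widetilde X)$ is unchanged. Since $\pi_*$ descends to $\mathcal B$, the class of $M^{a,c}$ in $\mathcal B(X)$ is metric-independent, and therefore so are its multiplicities. The same reasoning handles $M^a$.
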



%
\begin{cor}
If $g$ is generically an isomorphism and $E$ is trivial with a trivial metric, then
\begin{equation}\label{hackspett}
s(\hat F)=1-M^g, \quad  c(\hat F)=1+M^{g,c}.
\end{equation}
\end{cor}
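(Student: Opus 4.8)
The plan is to derive the Corollary as a direct specialization of Proposition~\ref{hummer} to the case where $g\colon E\to F$ is generically an isomorphism and $E$ is trivial with trivial metric. First I would observe that under these hypotheses $\Ker g=0$ generically, so $E/\Ker g=E$ on $X\setminus Z_0$; since $E$ is trivial with trivial metric, $s(E)=1$ and $c(E)=1$, whence $\1_{X\setminus Z_0}s(E/\Ker g)=\1_{X\setminus Z_0}s(E)=1$ (the extension of the constant form $1$ across the measure-zero set $Z_0$ is just $1$), and similarly $\1_{X\setminus Z_0}c(E/\Ker g)=1$. Moreover, with $\Ker g=0$ the morphism $a\colon E/\Ker g\to F$ coincides with $g$ itself, so Theorem~\ref{thmtre}(c) (or rather the remark that when $\Ker g$ extends to a subbundle, here the zero bundle, $M^a=M^{a'}$) identifies $M^a$ with the current $M^g$ of Theorem~\ref{thmett}. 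Plugging these identifications into \eqref{fhat} gives $s(\hat F)=1-M^g$, and into \eqref{fhat1} gives $c(\hat F)=1+M^{a,c}$ with $M^{g,c}:=M^{a,c}$.

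The one point that needs a little care is the identification $M^a=M^g$: Proposition~\ref{kork} defines $M^a$ via the regularization \eqref{premium} for the generically injective morphism $a$, and the remark following Proposition~\ref{kork} states precisely that when the generically-defined subbundle $\Ker g\subset E$ actually extends as a subbundle $N$ over all of $X$ — which holds trivially here with $N=0$ — the limit \eqref{adef} agrees with the earlier definition of $M^a$ via Proposition~\ref{mreg2}, i.e.\ with $M^g$. So I would simply cite that remark. Everything else is bookkeeping about Segre/Chern forms of the trivial bundle and the fact that restriction to $X\setminus Z_0$ followed by trivial extension fixes a smooth closed form.

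There is essentially no obstacle here; the Corollary is purely a matter of substituting $s(E)=c(E)=1$ and $\Ker g=0$ into Proposition~\ref{hummer}. If I wanted to be thorough I would also note that the displayed identity $c(\hat F)=1+M^{g,c}$ fixes the notation $M^{g,c}$ used in \eqref{hackspett} as the specialization of the current $M^{a,c}$ produced in the proof of Proposition~\ref{hummer}, and that its multiplicities being independent of the metrics on $E$ and $F$ is inherited verbatim from the last sentence of Proposition~\ref{hummer}. The proof is therefore two lines: apply Proposition~\ref{hummer}, use $s(E)=c(E)=1$ for the trivial bundle, and use the remark after Proposition~\ref{kork} to replace $M^a$ by $M^g$.

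\begin{proof}
Since $g$ is generically an isomorphism, $\Ker g=0$ on $X\setminus Z_0$, so $E/\Ker g=E$ there; as $E$ is trivial with trivial metric, $s(E)=1$ and $c(E)=1$, and hence $\1_{X\setminus Z_0}s(E/\Ker g)=1$ and $\1_{X\setminus Z_0}c(E/\Ker g)=1$. Moreover the induced injective morphism $a\colon E/\Ker g\to F$ is just $g$, and the generically defined subbundle $\Ker g\subset E$ extends (as the zero subbundle) over all of $X$; by the remark following Proposition~\ref{kork} the current $M^a$ of Theorem~\ref{thmtre} therefore coincides with the current $M^g$ of Theorem~\ref{thmett}. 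Substituting these identities into \eqref{fhat} and \eqref{fhat1} of Proposition~\ref{hummer} yields
\begin{equation}\label{hackspett-pf}
s(\hat F)=1-M^g, \qquad c(\hat F)=1+M^{g,c},
\end{equation}
where we write $M^{g,c}$ for the current $M^{a,c}$ supplied by Proposition~\ref{hummer}. The independence of the multiplicities of $M^g$ and $M^{g,c}$ of the smooth metrics on $E$ and $F$ is the last assertion of Proposition~\ref{hummer}.
\end{proof}
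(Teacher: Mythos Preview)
Your proof is correct and is exactly the specialization the paper has in mind: the paper gives no separate argument for the corollary, treating it as an immediate consequence of Proposition~\ref{hummer} via $s(E)=c(E)=1$ and $\Ker g=0$, together with Theorem~\ref{thmtre}(c) (equivalently the remark after Proposition~\ref{kork}) to identify $M^a$ with $M^g$. Your write-up just makes these substitutions explicit.
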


Different trivial metrics on $E$ may produce different $M^g$, 
see Examples~\ref{snurr1} and \ref{snurr2}.  However, 
$-s_1(\hat F)=c_1(\hat F)=[\det g]$, see Proposition~\ref{m1} and the definition of $M^{g,c}$ below.

\begin{proof}[Proof of Proposition~\ref{hummer}]
Clearly \eqref{fhat} is equal to $s(E/\Ker g)$ in $X\setminus Z_0$.  
Theorem~\ref{thmtre}~(a) implies that \eqref{fhat} is 
Bott-Chern cohomologous with $s(F)$, and thus a Segre form for $\hat F$.

Let $\pi\colon X'\to X$ be a modification as in the proof of Theorem~\ref{thmtre}.
Then we have, cf.~\eqref{polyp},
$ 
dd^cW^{a'}=M^{a'} +s(F')-s(E'/N').
$ 
Since $s(E'/N')$ and $c(E'/N')$ are smooth, we get 
\begin{equation}\label{kobra}
dd^c V^{a'}=M^{a',c}+c(E'/N')-c(F'),
\end{equation}
where $M^{a',c}= c(F')c(E'/N') M^{a'}$ and $V^{a'}=c(F')c(E'/N') W^{a'}$.
We define
\begin{equation}\label{pudel}
M^{a,c}=\pi_*M^{a',c}, \quad V^a=\pi_* V^{a'}. 
\end{equation}
By regularization as in the proof of Theorem~\ref{thmtre} one verifies that the definitions in \eqref{pudel} are independent of $\pi$. 
Thus $M^{a,c}$ and $V^a$ are globally defined on $X$.
Applying  $\pi_*$ to \eqref{kobra} we get
$$
dd^c V=M^{a,c} +\1_{X\setminus Z_0}c(E/\Ker g)-c(F).
$$
Thus \eqref{fhat1} is a Chern form for $\hat F$. 

The class of the current $M^a$ in $\mathcal B(X)$ is independent of the smooth metrics on $E$ and $F$.
The same holds for the class of  $M^{a',c}$ in $\mathcal B(X')$ and hence for the class
of $M^{a,c}$ in $\mathcal B(X)$.  Thus the statements about $\mult M^a$ and $\mult M^{a,c}$ follow.
\end{proof}


%

\vspace{.5cm}

\section{Segre numbers and distinguished varieties associated with a coherent sheaf}\label{segresection}

These numbers, which generalize
the Hilbert-Samuel multiplicity of $\J_x$, were introduced, with a geometric definition,
in the '90s,  independently by Tworzewski, \cite{Twor} and Gaffney-Gassler,  \cite{GG}.  Later on
a purely algebraic definition was given in Achilles-Manaresi, \cite{AM},  and Achilles-Rams, \cite{AR}.  
We can consider such a  $g$ as a morphism $E\to F$, where $E=X\times \C$ is a trivial line bundle
with a trivial metric. 

\bigskip

Assume that $g$ is a holomorphic section of a vector bundle $F$, that is, $E$ is trivial line bundle in our
set-up.  Then $g$ generates an ideal sheaf $\J\subset\Ok$ which is precisely the image of the 
dual morphism  $g^*\colon \Ok(F^*)\to \Ok(E^*)=\Ok$.  The decomposition 
\eqref{king} is a generalization of the classical King formula, \cite{King}, and the analytic sets $Z_j^k$ that
appear in the fixed part are precisely the so-called distinguished varieties associated with $\J$.  If
$\pi\colon X'\to X$ is the blow-up of $X$ along $\J$, then $Z_j^k$ are precisely the images of the various
irrreducible components of the exceptional divisor in $X'$.  As mentioned in the introduction, the 
multiplicities $\mult_x M^g_k$ are the so-called Segre numbers $e_k(\J_x)$ of $\J_x$.  

We will discuss generalizations 
to arbitrary coherent (analytic) sheaves.
As for notions like Cohen-Macaulay, dimension etc, we 'identify'  an ideal sheaf $\J$ with the quotient sheaf $\Ok/\J$. 
By definition an arbitrary coherent sheaf $\F$ locally has a representation 
$\F=\Ok(E^*)/\Ims g^*$, where $g\colon E\to F$ is a holomorphic morphism. 

\begin{prop}\label{bamba}
Given a coherent sheaf $\F=\Ok(E^*)/\Ims g^*$, the multiplicities $\mult_x M^g_k$ and the
fixed part of the decomposition \eqref{king} only depends on $\F$.
\end{prop}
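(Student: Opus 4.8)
The statement is local, so the plan is to fix a point of $X$, work on a small Stein neighbourhood on which every sheaf in sight is free, and show that two presentations $g_1\colon E_1\to F_1$ and $g_2\colon E_2\to F_2$ of $\F$ — i.e.\ with $\Ok(E_1^*)/\Ims g_1^*\cong \Ok(E_2^*)/\Ims g_2^*$ — yield the same $\mult_xM^g_k$ and the same fixed parts $S^g_k$. The first observation I would record is that changing the Hermitian metrics on $E$ or on $F$ does not alter $|g(x)\alpha|$ (this is the norm in $F$ of $g(x)\alpha$, and $\alpha$ runs over $E_x$ without its own norm entering), so $g$ with one choice of metrics is \emph{comparable} to $g$ with another in the sense of Definition~\ref{brutus}; hence by Theorem~\ref{thmett}~(vii) and the remark following Theorem~\ref{thmett}, both $\mult_xM^g_k$ and $S^g_k$ are metric independent. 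From here on metrics may be chosen conveniently.

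Next I would invoke a standard form of Schanuel's lemma: any two finite free presentations of $\F$ become isomorphic after a finite chain of the elementary operations \textbf{(a)} replacing $g\colon E\to F$ by $g\oplus 0\colon E\to F\oplus F'$; \textbf{(b)} replacing $g\colon E\to F$ by $g\oplus\mathrm{id}_{E'}\colon E\oplus E'\to F\oplus E'$; \textbf{(c)} replacing $g\colon E\to F$ by $\chi\circ g\circ\psi$ for holomorphic bundle isomorphisms $\psi$ and $\chi$. Each of these leaves $\Ok(E^*)/\Ims g^*$ unchanged up to isomorphism: for (a), $\Ims(g\oplus 0)^*=\Ims g^*$; for (b), $\Ims(g\oplus\mathrm{id}_{E'})^*=\Ims g^*\oplus\Ok(E'^*)$, whose quotient in $\Ok(E^*\oplus E'^*)$ is again $\F$; for (c) it is clear. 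So it remains to check that (a)--(c) preserve $\mult_xM^g_k$ and $S^g_k$.

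For (a): taking the direct sum metric on $F\oplus F'$ and $i\colon F\hookrightarrow F\oplus F'$ the subbundle inclusion, Theorem~\ref{thmett}~(iv) gives $M^{g\oplus 0}=M^{i\circ g}=M^g$ as currents. For (b): applying Theorem~\ref{thmett}~(v) to the pointwise injective $\mathrm{id}_{E'}$ (so $\Im\mathrm{id}_{E'}=E'$) gives $M^{g\oplus\mathrm{id}_{E'}}=s(E')\w M^g$; writing $s(E')=1+\gamma$ with $\gamma$ of positive bidegree and using Lemma~\ref{nolllma}, for each $k$ one has $(s(E')\w M^g)_k=M^g_k+\sum_{j<k}(s(E'))_{k-j}\w M^g_j$, where by Lemma~\ref{nolllma} every term in the sum has all multiplicities zero and hence lies in the moving part, so $\mult_xM_k$ and $S_k$ are unchanged. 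For (c): a bundle isomorphism $\chi$ satisfies $|\chi\beta|\sim|\beta|$ locally, so $\chi\circ g$ is comparable to $g$ and Theorem~\ref{thmett}~(vii) together with the ensuing remark gives the claim for $\chi$; for a bundle automorphism $\psi$ of $E$, $\psi$ induces a biholomorphism $\tilde\psi$ of $\Pk(E)$ over $X$ with $\tilde\psi^*L$ isomorphic (via $\psi$) to $L$ as holomorphic line bundles, whence $s(\tilde\psi^*L)-s(L)$ is $\sim 0$ (a Segre form wedge a $B$-form, cf.\ the discussion around \eqref{chern1} and the definition of $\sim$) and $\mathring M^{g\psi\alpha}=\tilde\psi^*\mathring M^{g\alpha}$ after that identification; therefore $s(L)\w\mathring M^{g\psi\alpha}\sim\tilde\psi^*\!\big(s(L)\w\mathring M^{g\alpha}\big)$ in $\mathcal{B}(\Pk(E))$, and applying $p_*$ (using $p\circ\tilde\psi=p$) shows $M^{g\circ\psi}$ and $M^g$ lie in the same class in $\mathcal{B}(X)$, hence have the same multiplicities and fixed parts. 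Chaining the moves finishes the argument.

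The main obstacle I anticipate is move (c): $M^g$ is genuinely not invariant under a bundle automorphism of $E$ — the induced automorphism of $\Pk(E)$ fails to be isometric on the tautological bundle — so one is forced to pass to $\mathcal{B}$-classes and exploit that $B$-forms are $\sim 0$; the other delicate point is to organise the Schanuel reduction so that it produces exactly the moves (a)--(c) and nothing that would alter $\F$.
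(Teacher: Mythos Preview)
Your argument is essentially correct and arrives at the result by the same mechanism as the paper: invariance under adding a pointwise injective direct summand (your move (b), the paper's Theorem~\ref{thmett}~(v) + Lemma~\ref{nolllma}) together with invariance, at the level of $\mathcal B$-classes, under isomorphisms of the presentation. The paper packages this by quoting the structure of minimal free resolutions --- any presentation is, up to isomorphism, (minimal)$\,\oplus\,$(pointwise injective) --- and then applies (v); it does not spell out the ``up to isomorphism'' step. Your Schanuel decomposition into moves (a), (b), (c) makes this step explicit, and your treatment of (c) for a domain automorphism $\psi$ (via $\tilde\psi^*L\cong L$ and $s(L)-\tilde\psi^*s(L)\sim 0$) is exactly what is needed to close that gap. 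So your route is slightly longer but more self-contained.

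Two small corrections. First, in your opening paragraph you write that changing the metric on $F$ ``does not alter $|g(x)\alpha|$''; it does, since this is the $F$-norm of $g(x)\alpha$. What you need (and what makes the argument work) is that two Hermitian metrics on $F$ are locally equivalent, so $|g\alpha|_{F_1}\sim|g\alpha|_{F_2}$ and comparability applies. Second, comparability in Definition~\ref{brutus} and Theorem~\ref{thmett}~(vii) concerns the $F$-side only and does not by itself handle a change of the $E$-metric; for that one needs that the two induced Segre forms $s(L)$ differ by a form $\sim 0$ --- precisely the ingredient you invoke later in your (c)\,$\psi$ argument. Once those two points are stated correctly, your proof goes through.
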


Taking this proposition for granted the following definitions may be reasonable.

\begin{df}
If the coherent $\F$ has the local presentation $\F=\Ok(E^*)/\Ims g^*$, then we define its Segre
numbers $e_k(\F_x)=\mult_x M^g_k$, $k=0,1, \ldots$, and its distinguished varieties as
the various components of the fixed part in \eqref{king} for various $k$. 
\end{df}

It follows from Theorem~\ref{thmett} that the Segre numbers $e_k(\F_x)$ are non-negative integers that can be strictly positive only if
$x\in Z$ and $k\ge \codim Z$.  

\begin{remark}\label{brjox}
If $\F$ has zero set $\{x\}$, then the Buchsbaum-Rim multiplicity was  introduced in
\cite{BR}.  This definition is algebraic, but a geometric description appeared
in  \cite{KT1,KT2} and \cite{HM}. One can verify that it indeed coincides with 
$\mult_x M^g_n$.  A detailed argument will be given in a forthcoming paper.
If the singularity is not isolated, in  \cite{CP} is defined algebraically a list of numbers, generalizing
the description in \cite{AM} of Segre numbers in case of an ideal. 
One could guess that these numbers coincide with the numbers $\mult_x M^g_k$.
\end{remark}

Let $\pi\colon Y\to\Pk(E)$ be the blow-up of $\Pk(E)$ along $G=g\alpha$. 
In view of the discussion above and the proof of Theorem~\ref{thmett}~(viii),
the distinguished varieties of $\F$ are the images under $p\circ \pi$ of the various
irreducible components of the exceptional divisor of $\pi$.

\begin{proof}[Proof of Proposition~\ref{bamba}]
A minimal free resolution of $\F$ at a point $x$ is unique, up
 to biholomorphisms, and any resolution at $x$ is the direct sum of a minimal
 resolution and a resolution of $0$. The latter resolution ends with a pointwise surjective mapping
 $(g')^* \colon   (F)^*\to (E')^*$. 
 If $g^*$ is the last mapping in a minimal resolution of $\F$ at $x$, then
$\F=\Ok(E^*)/\Im g^*$ and  any other representation has the form 
 $$
 \F= \Ok(E^*\oplus (E')^*)/\Im (g^*\oplus(g')^*),
 $$
 where $g'\colon E'\to F'$ is pointwise injective.  
In view of Theorem~\ref{thmett}~(v) and Lemma~\ref{nolllma} thus  
\begin{equation}\label{multplus}
\mult_x M_k^{g\oplus g'}=\mult_x M^g_k, \quad k=0,1,2,\ldots.
\end{equation}
Thus these numbers are intrinsic for the sheaf $\F$ at $x$.  
Consider now the representation \eqref{king} for $M_k^{g\oplus g'}$ and $M^g_k$, respectively.
Since $N_k^{g\oplus g'}$ and $N_k^{g}$ only  have non-zero multiplicities on sets of
codimension $\ge k+1$, \eqref{multplus} implies that $M_k^{g\oplus g'}$ and $M^g_k$ have the same fixed part.
\end{proof}

\begin{ex}
The morphism $g^*(x)$ in  Example~\ref{hund} below gives the coherent sheaf
$
\F=\Ok\oplus \Ok/x_1\Ok\oplus x_2\Ok,
$
and it is shown that its distinguished varieties are the axes and the point $(0,0)$.
Moreover, it has  non-zero
multiplicities on both codimension $1$ and $2$.
The morphism defined by the matrix
\begin{equation} 
\begin{bmatrix} 
	x_1x_2 & 0 \\
	0 & 1 \\
\end{bmatrix}.
\end{equation}
gives the sheaf
$
\F=\Ok\oplus \Ok/x_1x_2\Ok\oplus \Ok=\Ok/x_1x_2\Ok,
$
which we identify with the ideal sheaf $\la x_1x_2\ra$. 
It has the coordinate axes as distinguished varieties and non-zero
multiplicities only on codimension $1$.
However, the determinant ideals in both cases are $\la x_1 x_2\ra$.
Thus neither distinguished varieties nor multiplicities can be computed from the determinant ideal.
\end{ex}

 \vspace{.5cm}

\section{Some examples and remarks}\label{exsection}

We will use the notation introduced in Section~\ref{defsec}.
We present our first example as a proposition.

\begin{prop}\label{m1}
If $g\colon E\to F$ is generically an isomorphism, then
\begin{equation}\label{drummel}
M_{1}^g=[\dv(\det g)].
\end{equation}
\end{prop}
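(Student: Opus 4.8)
The plan is to prove \eqref{drummel} by combining the defining formula for $M^g$ in Definition~\ref{mdef} with the analysis of the top-degree residue carried out in Section~\ref{ma-produkt}, reducing everything to the divisor $\det g = 0$. First I would recall that when $g$ is generically an isomorphism we have $r = \rank E = m = \rank F$, and that by Proposition~\ref{guppi} the current $M^g$ is smooth off $Z_0 = Z = \{\det g = 0\}$; in fact there $N = \Ker g = 0$, so $M^g = s(\Im g) \w s(N) = s(\Im g)$ restricted to $X \setminus Z$, which has no component of bidegree $(1,1)$ supported on the whole manifold. Hence $M_1^g$ is a closed $(1,1)$-current supported on $Z$, and by part (o) of Theorem~\ref{thmett} it is a generalized cycle; since $Z$ has codimension $1$ (assuming $X$ is not a point), $M_1^g$ is a closed positive current of bidimension $n-1$ supported on an analytic hypersurface, hence by the Lelong--Poincaré/support theorem it is a $\Zk$-combination of the Lelong currents $[Z_j]$ of the irreducible components $Z_j$ of $Z$, say $M_1^g = \sum_j a_j [Z_j]$ with $a_j \in \Zk_{\ge 0}$.

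Next I would identify the coefficients $a_j$. The cleanest route is via Definition~\ref{mdef}: $M^g = p_*(s(L) \w \mathring M^{g\alpha})$, so $M_1^g = p_*\big((s(L) \w \mathring M^{g\alpha})_r\big)$ where $r = \rank E$. Here $\mathring M^{g\alpha} = \mathring M^G$ with $G(x,\alpha) = g(x)\alpha$ a section of $p^*F \otimes L^*$ over $\Pk(E)$, whose zero set $Z'$ satisfies $p(Z') = Z$ and, over $X \setminus Z$, $Z' = \emptyset$; thus $\mathring M^G_0 = 0$ and the lowest-degree contribution to $s(L) \w \mathring M^G$ of degree $r$ comes from $\mathring M^G_1 \w (s(L))_{r-1} = \mathring M^G_1 \w \omega_\alpha^{r-1}$. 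Now $\mathring M^G_1 = \1_{Z'}[dd^c \log|G|_\circ^2]^1$, and by \eqref{tutt4}--\eqref{pulka2} applied to a modification resolving $G$, $\mathring M^G_1$ is precisely the Lelong current $[\dv G]$ of the divisor of $G$ on $\Pk(E)$ (the section $G$ defines a Cartier divisor since generically $G$ vanishes nowhere; its divisor is the proper preimage plus the exceptional behavior, but as $G$ is a genuine holomorphic section of a line-twisted bundle, $\mathring M^G_1 = [\dv G]$). Then $M_1^g = p_*([\dv G] \w \omega_\alpha^{r-1})$, and since $p$ has $(r-1)$-dimensional fibers $\Pk(E_x) \cong \Pk^{r-1}$ on which $\omega_\alpha$ is the Fubini--Study form with $\int_{\Pk^{r-1}} \omega_\alpha^{r-1} = 1$, pushing forward picks out, over each $Z_j$, the generic multiplicity of $\dv G$ along $p^{-1}Z_j$.

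The key computation is therefore: the multiplicity of $\dv G$ along the generic fiber-direction over $Z_j$ equals the multiplicity of $\det g$ along $Z_j$, i.e. $a_j = \ord_{Z_j}(\det g)$. To see this, work over a generic point $x_0 \in Z_j$: choose local frames so $g$ is an $r \times r$ matrix, and on the fiber $\Pk(E_{x_0})$ the section $G(x_0, \cdot) = g(x_0)\alpha$ vanishes exactly on $\Pk(\Ker g(x_0))$; more precisely, in the family over a transversal disk parametrized by $t$ with $\det g \sim t^{a_j}$, a Smith-normal-form / elementary-divisor argument shows that $g\alpha$, as a section over the disk times $\Pk^{r-1}$, has divisor whose generic vertical multiplicity is exactly $a_j$ — because the product of elementary divisors is $\det g$ up to a unit, and the residue $[\dv G]$ integrated against $\omega_\alpha^{r-1}$ along the fiber sums these up. This is precisely the content for which one can invoke \eqref{packad} applied fiberwise, or equivalently King's formula; alternatively, by the functoriality Theorem~\ref{thmett}~(iii) one reduces to the case where $\Ker g$ extends to a subbundle, where $g$ becomes, after the modification, a section of a line bundle whose divisor is manifestly $[\dv(\det g)]$ pushed down.

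The main obstacle I anticipate is the fiberwise multiplicity bookkeeping: showing rigorously that $p_*([\dv G] \w \omega_\alpha^{r-1}) = \sum_j \ord_{Z_j}(\det g)\,[Z_j]$ rather than some other combination. The clean way around it, which I would adopt, is to avoid explicit matrix computations and instead use functoriality: pick a modification $\pi\colon \widetilde X \to X$ (by Lemma~\ref{utvid}) so that $\Ker \pi^* g$ extends to a subbundle $N$ of $\pi^*E$; then $\pi^* g$ factors through the injective morphism $a\colon \pi^*E/N \to \pi^*F$, which between line bundles (generically) is multiplication by a section whose divisor is $[\dv \det(\pi^* g)] = \pi^*[\dv \det g]$ plus exceptional components; apply Theorem~\ref{thmett}~(iii), $\pi_* M^{\pi^*g} = M^g$, and \eqref{packad}/\eqref{ragnar} to get $M_1^{\pi^* g}$ as the appropriate divisor on $\widetilde X$, then push forward, using that $\pi_*$ of the exceptional part in degree $1$ vanishes since those components map to codimension $\ge 2$ sets. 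Collecting the constants $a_j$ this way gives \eqref{drummel} directly.
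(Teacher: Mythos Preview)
Your main computation has a genuine gap. The section $G=g\alpha$ takes values in the rank-$r$ bundle $p^*F\otimes L^*$, not in a line bundle, so there is no Cartier divisor ``$\dv G$''. More to the point, the identity $M_1^g = p_*(\mathring M^G_1\w\omega_\alpha^{r-1})$ you use is false: from $M_1^g=p_*\big((s(L)\w\mathring M^G)_r\big)$ one has $(s(L)\w\mathring M^G)_r=\sum_{j=0}^{r}\omega_\alpha^{\,j}\w\mathring M^G_{r-j}$, and in general \emph{all} of $\mathring M^G_1,\ldots,\mathring M^G_r$ can contribute, not just the term with $\mathring M^G_1$. The example in Section~\ref{exsection} with $X=\C$, $r=2$, and $g$ the diagonal matrix with entries $x^2,x$ makes this explicit: there $\mathring M^{g\alpha}_1=[x=0]$ and $\mathring M^{g\alpha}_2=2[x=0]\w[\alpha_2=0]$, and both are needed to obtain $M_1^g=3[x=0]=[\dv\det g]$; your formula, keeping only the $\mathring M^G_1$ term, would give $[x=0]$. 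The Smith-normal-form sketch does not address this discrepancy.

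Your functorial alternative does not rescue the argument either. Since $g$ is generically an isomorphism, $\Ker g=0$ on $X\setminus Z$, so any subbundle extension $N$ of $\Ker\pi^*g$ is the zero bundle; then $a=\pi^*g$ is still a morphism between rank-$r$ bundles and nothing has been reduced to line bundles, so neither \eqref{packad} nor \eqref{ragnar} applies.

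The paper takes a quite different route. After observing, as you do, that $M_1^g$ must be a cycle supported on the hypersurface $Z$, it suffices to compute $\mult_x M_1^g$ at regular points of $Z$. The paper first handles $n=1$: a simple zero of $\det g$ is immediate, while a zero of order $\nu>1$ is treated by a continuous perturbation $g_t$ with $\det g_1$ having $\nu$ simple zeros, using that the total mass $\int_{|x|<1}M_1^{g_t}$ is integer-valued (by Theorem~\ref{thmett}) and, via Stokes and \eqref{wdef}, continuous in $t$, hence constant and equal to $\nu$. The case $n>1$ is then reduced to $n=1$ by restricting to a transversal complex line and invoking Proposition~\ref{kakadua}.
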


\begin{proof}
Let $Z$ be the zero set of $\det g$.  Since  $M^g_1$ is a $(1,1)$-current with support on
the hypersurface $Z$ it must be (the Lelong current of) a cycle with support on $Z$.  
It is therefore enough 
to check, for any regular point $x\in Z$,  that $\mult_x M^g=\mult_x [\dv(\det g)]$.

Let us first assume that $n=1$,  that the base space $X$ is a \nbh $\U$ of the closed unit disk, 
 $E=\U\times \C^r$,  and  $F=\U\times \C^r$ and $\det g(x)=x^\nu a$ in $\U$, where $a$ is non-vanishing.
Since the multiplicities are independent of the metrics on $E$ and $F$ we can assume that they are trivial in $\U$.
If $\nu=1$, then $g(0)$ has a simple eigenvalue and hence a one-dimensional kernel. Thus $\mathring M^{g\alpha}_r$
is a point mass in $\Pk(E)$ and hence
$$
M^g_1=p_*(s(L)\w  \mathring M^{g\alpha}_r)=p_*\mathring M^{g\alpha}_r=[0] .
$$ 
Now assume that $\nu>1$. 
We can choose  a continuous perturbation $g_t$ of $g$ such that $g_0=g$ and
 $\det g_1$ has $\nu$ distinct simple zeros  $x^1,\ldots, x^\nu$ close to $x=0$.  
 Then the kernel of each  $g(x^j)$ is one-dimensional, so that 
 $M^{g_1}=[x^1]+\cdots +[x^\nu]$ and so
its total mass is $\nu$. 
Since we have trivial metrics $s_1(E)=0$ and $s_1(F)=0$, so by \eqref{main111},
 $$
\int_{|x|<1} M_1^{g_t}=\int_{|x|<1} dd^c W_0^{g_t}=\int_{|x|=1} d^c W_0^{g_t}.
$$
For each $t$ the integral is a sum of the Lelong numbers (multiplicities) of $M^{g_t}_1$ so by Theorem~\ref{thmett} it is a positive integer.
From formula \eqref{wdef} we see that $w^{g_t}$ depends continuously on $t$ on $|x|=1$. Thus
the integral is $\nu$ also for $g=g_0$, so the proposition  holds when $n=1$.  

Now assume that $n>1$, $0$ is a regular point on $Z$, and locally $Z=\{x_1=0\}$. 
Then  $\det g=x_1^\nu a$ for some $\nu$ and non-vanishing holomorphic function $a$. 
From the discussion above we know that  $M^g_1=\mu[x_1=0]$ so we have to prove that $\mu=\nu$.
For a generic choice of complementary coordinate functions  $x_2,\ldots,x_n$,
$$
\mu=\mult_0 M_1^g=\mult_0 \big([x_2=\cdots =x_n=0]\w M_1^g\big).
$$
Let $i\colon \C_{x_1}\to \C^n_x, \  x_1\mapsto (x_1, 0, \ldots, 0)$.  By Proposition~\ref{kakadua} thus
\begin{equation}\label{kungar}
i_* M^{i^* g}_1= [x_2=\cdots =x_n=0]\w M_1^g=\mu[0].
\end{equation}
Now $\det i^*g(x_1)=a(x_1,0)x_1^\nu$ so from the case $n=1$ we have  $M^{i^*g}_1=\nu[0]$ in $\C$ so that
$i_* M^{i^* g}_1=\nu[0]$ in $\C^n$. In view of \eqref{kungar} thus $\mu=\nu$.
\end{proof}

We will %
use the following form of Crofton's formula, see, e.g., \cite[Lemma~6.3]{aswy}:
If $(f_1, \ldots, f_m)$ is a tuple of holomorphic functions and 
 $[\gamma_1,\ldots,\gamma_m]\in \Pk(\C^m_\gamma)$, then
\begin{equation}\label{fjant1}
\int_\gamma [\dv(\gamma_1f_1+\cdots+\gamma_mf_m)] d\sigma(\gamma)=dd^c\log(|f_1|^2+\cdots+|f_m|^2).
\end{equation}
Here $d\sigma$ is the normalized volume form associated with the Fubini-Study metric on $\Pk(\C^m)$. 
If in addition $\dv f_1,\ldots,\dv f_m$ intersect properly, i.e., the codimension of their intersection is $m$,
then  
$$
\big(dd^c\log(|f_1|^2+\cdots+|f_m|^2)\big)^k=\big[dd^c\log(|f_1|^2+\cdots+|f_m|^2)\big]^k
$$
is locally integrable for $k<m$ and
\begin{equation}\label{fjant}
M^f_m=[dd^c\log(|f_1|^2+\cdots+|f_m|^2)]^m=[\dv f_1]\w\cdots\w [\dv f_m].
\end{equation}
The right hand side is the (Lelong current of the) of the intersection product 
of the divisors and can be defined by any reasonable regularizations of the $[f_j]$, see \cite[2.12.3]{Ch}.
It is well-known that this product is unchanged if 
$f_j$ are replaced by $\gamma^j\cdot f=f_1\gamma_1^j+\cdots +\gamma_m^jf_m$ for generic choices of $\gamma^j\in \Pk(\C^m)$.
Therefore one can deduce  \eqref{fjant} from \eqref{fjant1}. 
In the examples below we often write $[f=0]$ for $[\dv f]$.

\begin{ex}\label{hund}
Let $X=\C_x^2$,  $E=X\times\C^2_\alpha$, $F=X\times \C^2$, both with trivial metric.
and $g\colon E\to F$ defined by
\begin{equation}\label{matrix}
\begin{bmatrix} 
	x_1 & 0 \\
	0 & x_2 \\
\end{bmatrix}.
\end{equation}
Then $g\alpha=(x_1\alpha_1,x_2\alpha_2)$  defines a proper intersection in $\C^2_x\times\Pk(\C^2_\alpha)$
so by \eqref{fjant}
\begin{equation}\label{setboll}
\mathring M^{g\alpha}=\mathring M^{g\alpha}_2=
[dd^c\log|g\alpha|^2_\circ]^2=[x_1=\alpha_2=0]+[x_2=\alpha_1=0]+[x_1=x_2=0].
\end{equation}
Since $s(L)=1+\omega_\alpha$ we see that $M^g=M^g_1+M^g_2$, where
$$
M^g_1=[x_1=0]+[x_2=0]=[x_1x_2=0], \quad M^g_2=[x_1=x_2=0].
$$
Notice that $M^g_1=[\dv(\det g)]$  in accordance with Proposition~\ref{m1}. 
\end{ex}


The next example shows that in general several components of $\mathring M^{g\alpha}$ come into 
play to produce $M^g_1$.

\begin{ex}
Let $X=\C$, $E=X\times \C_\alpha^2$ and $F=X\times \C^2$, and let both $E$ and $F$ be equipped with the trivial metric. 
Let $g\colon E\to F$ be the morphism defined by the matrix 
$$
\begin{bmatrix} 
	x^2 & 0 \\
	0 & x \\
\end{bmatrix}.
$$
Notice that $\det g=x^3$, $Z=\{0\}$, and $Z'=\{0\}\times \Pk(\C^2_\alpha)$. 
Now
$dd^c\log|g\alpha|^2_\circ=[x=0]+dd^c\log([x\alpha_1|^2+|\alpha_2|^2)_\circ$ so that
$$
\mathring M_1^{g\alpha}=\1_{Z'}dd^c\log|g\alpha|^2_\circ=[x=0].
$$
Furthermore, a computation using \eqref{fjant},  yields that
$
dd^c\big(\log|g\alpha|^2_\circ \1_{\Pk(E)\setminus Z'}dd^c\log|g\alpha|^2_\circ\big)=
[x=0]\w[\alpha_2]+[x\alpha_1=0]\w[\alpha_2=0]
$
and hence
$$
\mathring M_2^{g\alpha}=2[x=0]\w[\alpha_2=0].
$$
Altogether, as expected from Proposition~\ref{m1} 
$$
M^g_1=p_*\big(s(L)\w \mathring M^{g\alpha}\big)=p_*\big(\omega_\alpha\w[x=0]+2[x=0]\w[\alpha_2=0]\big)=3[x=0].
$$
%
%
\end{ex}

\begin{ex}\label{snurr1}
Let $X=\C^2$, $E=X\times \C^2_\alpha$, $F=X\times \C$ with trivial metrics, and $g$ the morphism given by
$[x_1 \ x_2]$. 
Since $g$ is not generically injective, $Z=X$. 
Moreover, $Z'=\{(x,[\alpha]); \ x_1\alpha_1+x_2\alpha_2=0\}$. We have 
$
\mathring M^{g\alpha}=\mathring M_1^{g\alpha}=[x_1\alpha_1+x_2\alpha_2=0].
$
Since $s(L)=1+\omega_\alpha$ we get, using \eqref{fjant1}, 
$$
M^g=M^g_0+M^g_1= \1_X + dd^c\log(|x_1|^2+|x_2|^2).
$$
Here $M^g_0$ is the fixed part, and it consists of the single distinguished  variety $X$.
The  term $M^g_1$ has dimension $1$ and is geometrically the mean value of lines through the origin in $X$,
so it is a moving term.
It follows that 
$\mult_{(0,0)}M^g_1=1$ but $\mult_{(x_1,x_2)}M^g_1=0$ for $(x_1,x_2)\neq(0,0)$. 

If we change the trivial metric on $E$, e.g., by letting $|\alpha|^2:=|\alpha_1|^2+2|\alpha_2|^2$, then 
$\omega_\alpha=dd^c\log(|\alpha_1|^2+2|\alpha_2|^2)$ and one can verify that then
$M^g_1=dd^c\log(2|x_1|^2+|x_2|^2)$. 
\end{ex}
 
Here is a similar  example  but where $g$ is generically injective.

\begin{ex}\label{snurr2} 
Let $X=\C^3$, $E=F=X\times \C^3$ with trivial metrics, and $g$ given by 
$$
\begin{bmatrix} 
	x_1x_3 & 0 & 0 \\
	0 & x_2x_3 & 0\\
        0& 0& x_3^2
\end{bmatrix}.
$$
Then $Z=\{x_1 x_2 x_3=0\}$, and
$g\alpha=x_3(x_1\alpha_1, x_2\alpha_2, x_3\alpha_3)$ so that 
$$
dd^c\log|g\alpha|_\circ^2=[x_3=0]+dd^c\log(|x_1\alpha_1|^2+|x_2\alpha_2|^2+|x_3\alpha_3|^2).
$$
Thus $\mathring M^{g\alpha}_1=[x_3=0]$. Next we have
\begin{multline*}
[dd^c\log|g\alpha|_\circ^2]^2=\\
dd^c\big(\log|x_3|^2+\log(|x_1\alpha_1|^2+|x_2\alpha_2|^2+|x_3\alpha_3|^2)\w 
dd^c\log(|x_1\alpha_1|^2+|x_2\alpha_2|^2+|x_3\alpha_3|^2)\big) =\\
[x_3=0]\w dd^c\log(|x_1\alpha_1|^2+|x_2\alpha_2|^2)+\big(dd^c\log(|x_1\alpha_1|^2+|x_2\alpha_2|^2+|x_3\alpha_3|^2)\big)^2.
\end{multline*}
Thus 
$$
\mathring M_2^{g\alpha}=[x_3=0]\w dd^c\log(|x_1\alpha_1|^2+|x_2\alpha_2|^2).
$$
Furthermore we get, using \eqref{fjant}, 
$$
\mathring M_3^{g\alpha}=[x_3=0]\w \big(dd^c\log(|x_1\alpha_1|^2+|x_2\alpha_2|^2)\big)^2+
[x_1\alpha_1=0]\w [x_2\alpha_2=0]\w[x_3 \alpha_3=0].
$$
We do not compute all terms of $M^g$ but notice that, e.g., 
$$
\int_\alpha [x_3=0]\w dd^c\log(|x_1\alpha_1|^2+|x_2\alpha_2|^2)\w \omega_\alpha
$$
is a non-zero term in $M^g_2$ that has support on the hyperplane $[x_3=0]$. 
As in Example~\ref{snurr1} one can verify that $M^g_2$ depends on the choice of trivial metric on $E$.
\end{ex}

Assume that $g\colon E\to F$ is generically an isomorphism. Then $g^*\colon F^*\to E^*$ is 
as well. In view of \eqref{main111} and the fact that 
\begin{equation}\label{pontus}
s_k(E^*)=(-1)^k s_k(E)
\end{equation}
it follows that 
$M^{g^*}_k$ and $(-1)^{k+1}M^g_k$ define the same Bott-Chern class.

\begin{remark}\label{taktik}
It is {\it not} true in general that $M^{g^*}_k=(-1)^{k+1}M^g_k$. In fact, given trivial metrics on $E$ and $F$
we know that both $M^g$ and $M^{g^*}$ are positive currents.  Therefore \eqref{pontus} fails as soon as
$M^g_k$ is non-zero for an even $k$. 
See, e.g., $M^g_2$ in Example~\ref{hund}.
\end{remark}

Let us now consider a global version of Example~\ref{hund}.  
\begin{ex}\label{hund1}
Let $X=\Pk^2=\Pk(\C_{x_0,x_1,x_2})$. Then $x_j$ is a section of $\Ok(1)\to X$ and thus 
defines a morphism 
$\Ok(-1)\to X\times \C$. 
If $E=X\times \Ok(-1)\otimes\C^2_\alpha$ and  $F=X\times \C^2_\alpha$
thus  \eqref{matrix} defines a morphism $g\colon E\to F$.  We choose the natural metric on $\Ok(-1)$ so that
$s_1(\Ok(-1))=dd^c\log|x|^2=\omega_x$ on $X$.  It follows that $L$ then is the tautological line bundle
with respect to trivial metric on $\C^2_\alpha$ tensored by $\Ok(-1)$, so that
$s_1(L)=\omega_\alpha+\omega_x$. Noting that $\Pk(E)$ has dimension $1$ in $\alpha$,  therefore
\begin{equation}\label{porter}
s(L)=1+\omega_\alpha+\omega_x+2\omega_\alpha\w \omega_x+\omega_x^2+3\omega_x^2\w\omega_\alpha.
\end{equation}
Applying $p_*$ to \eqref{porter} we get 
\begin{equation}\label{perro1}
s(E)=1+2\omega_x+3\omega_x^2.
\end{equation}
Since the metric on $F$ is trivial we see that \eqref{setboll} still holds in this case (but interpreted on $\Pk(E)$). 
Combined with \eqref{porter} we can compute  $M^g=p_*(s(L)\w \mathring M^{g\alpha})$ and find that
\begin{equation}\label{perro2}
M^g_1=[x_1=0]+[x_2=0],\quad M^g_2=\omega_x\w [x_1=0]+\omega_x\w[x_2=0]+[x_1=x_2=0].
\end{equation}
Notice that \eqref{perro1} and \eqref{perro2} are in accordance with  \eqref{main111} 
since  $M^g_1$ and $M^g_2$ are Bott-Chern cohomologous
with $2\omega_x$ and $3\omega_x^2$, respectively,  on $X=\Pk^2$ and $s(F)=1$. 
\end{ex}

\begin{ex}\label{hund2}
Let us consider the adjoint mapping $g\colon X\times \C^2_\alpha\to X\times \Ok(1)\otimes \C^2$. In this 
case $s(L)=1+\omega_\alpha$ and $s(E)=1$. Now
$$
|g\alpha|^2_\circ=(|x_1\alpha_1|^2+|x_2\alpha_2|^2)_\circ/|x|^2
$$
and so
$$
dd^c\log|g\alpha|^2_\circ=dd^c\log((|x_1\alpha_1|^2+|x_2\alpha_2|^2)_\circ-\omega_x.
$$
We see that 
$$
\mathring M^{g\alpha}_2= \1_{Z'}[dd^c\log|g\alpha|^2_\circ]^2=[x_1=\alpha_2=0]+[x_2=\alpha_1=0]+[x_1=x_2=0]
$$
as before, whereas
$$
\1_{\Pk(E)\setminus Z'}[dd^c\log|g\alpha|^2_\circ]^2=-2dd^c\log(|x_1\alpha_1|^2+|x_2\alpha_2|^2)_\circ\w \omega_x
+\omega_x^2.
$$
Therefore
\begin{multline*}
\mathring M^{g\alpha}_3= \1_{Z'}[dd^c\log|g\alpha|^2_\circ]^3=\\
-2\big([x_1=\alpha_2=0]+[x_2=\alpha_1=0]+[x_1=x_2=0]\big)\w \omega_x =\\
-2[x_1=\alpha_2=0]\w\omega_x-2[x_2=\alpha_1=0]\w\omega_x.
\end{multline*}
Recalling that $s(L)=1+\omega_\alpha$ we get
\begin{equation}\label{perro3}
M^g_1= [x_1=0]+[x_2=0], \quad M^g_2=[x_1=x_2=0]-2[x_1=0]\w\omega_x-2[x_2=0]\w\omega_x.
\end{equation}
In view of \eqref{pontus} and \eqref{perro1},  $s(F)=1-2\omega_x+3\omega_x^2$.
Thus, cf.~\eqref{perro3},  \eqref{main111} is respected. 
\end{ex}

\begin{ex}\label{kisse}
Let $X=\Pk^2_x$ and consider the morphism $g\colon \Ok(-1)\to  X\times \C_\alpha^2$, where $g=[x_1\ x_2]$, so that
$g$ is singular at the point $p=[1,x_1,x_2]$.  We see that 
$$
s_1(\Im g)=dd^c\log(|x_1|^2+|x_2|^2)=:\omega_p
$$
in $X\setminus\{p\}$. It follows that $s_2(\Im g)=\omega_p^2=0$ in $X\setminus\{p\}$. Since $M^g_2$ has support at $p$
it must be $\alpha[p]$ for some integer $\alpha$. 
By  \eqref{main11}, 
$$
dd^c W^g_1=\1_{X\setminus\{p\}}s_2(\Im g)-s_2(\Ok(-1))^2+M^g_2=-\omega^2+M_2^g
$$
so we conclude that $M_2^g=[p]$.   It also follows directly, cf.~\eqref{fjant}, that
$$
M^g_2=\1_{\{p\}}dd^c\big(\log(|x_1|^2+|x_2|^2) \1_{X\setminus\{p\}}dd^c\log(|x_1|^2+|x_2|^2))=[p].
$$
\end{ex}

We shall now see that the morphism $a$ in Theorem~\ref{thmtre} can have negative multiplicities.

\begin{ex}\label{katt2}
Let $X=\Pk^2$ and consider the morphism $g'\colon X\times \C^2_\alpha\to \Ok(1)$; it is the dual of the morphism
in Example~\ref{kisse}.  Consider the induced morphism
$$
a\colon \C^2\times \Pk^2/\Ker g' \to \Ok(1).
$$
From  \eqref{pontus} 
we see that
$s_2(E/\Ker g')=0$.  By \eqref{trelikhet1}  in Theorem~\ref{thmtre} therefore  
$$
dd^c W^a_1=\omega^2 + M_2^{a},
$$
so we can conclude that $M_2^{a}=-[p]$.
\end{ex}

Let us now make a direct computation that reveals how the minus sign in the previous example appears,
without relying on the global formula \eqref{trelikhet1}.  
We consider a somewhat more general mapping, but restrict to the local
situation.

\begin{ex}\label{katt3}
Let $X=\U\subset \C^2$, $E=X\times \C^2_\alpha$, $F=X\times \C$ (with trivial metrics) and $g=(g_1,g_2)$ with an isolated zero at $0\in\U$. 
Let $\pi\colon X'\to X$ be a modification such that $\pi^*g=g^0g'$,  where $g^0$ is a section of the line bundle $\La\to X'$
$g'=(g'_1,g'_2)$ is a non-vanishing section of $\La^*\otimes \C^2$.
The kernel of $\pi^*g$ is generated by $(-g_2', g_1')$ in $X'\setminus \pi^*(0)$ and it thus has
a holomorphic extension to a subbundle of $E'=\pi^* E$ over $X'$.  Notice that the image  in 
 $E'/N'$  of  the holomorphic section $u_1=(1,0)$ is non-vanishing in the open subset of $X'$ where  $g'_1\neq 0$.  
 The norm of the image of $u_1$ in $E'/N'$ is the $E'$-norm of
$$
\hat u_1=u_1-\frac{u_1\cdot (-\bar g_2',\bar g_1')}{|g'|^2}(-g_2', g_1').
$$
A straight forward computation reveals that $|\hat u_1|^2=|g_2'|^2/|g'|^2$, and thus
$dd^c\log |\hat u_1|^2= -dd^c\log|g'|^2$ in the set where $g'_1\neq 0$. An analogous  formula holds where $g'_2\neq 0$.
Since $E'/N'$ is a line bundle we conclude that 
$$
s_1(E'/N')=-dd^c\log|g'|^2, \quad s_2(E'/N')=(-dd^c\log|g'|^2)^2=0.
$$
Notice that  $a'\colon E'/N'\to \pi^* F$ is defined by $a'=g^0(g'_1,g'_2)$ so that 
$\dv a'=[g^0=0]$.  Recalling, cf.~\eqref{ragnar},  that 
$M^{a'} =s(E'/N')\w [\dv a']$ we thus have 
$
M^{a'}_1=[g^0=0] $ and $M^{a'}_2= s_1(E'/N')\w [g^0=0].$
We conclude that
$$
M_2^a=\pi_* M_2^{a'}=-c[0],
$$
where $c$ is a positive integer. In fact,  $M^{g^*}_2=c[0]$ so $c$ is the multiplicity of the zero of $g$ at $0$. 
\end{ex}

\begin{remark}\label{apl}
Let $E$ be a trivial line bundle (with trivial metric) and let $g\colon E\to F$ be generically injective morphism, i.e., 
a non-trivial  holomorphic section of
$F$. With the notation in this paper a residue current, here denoted by $M^{g,a}$,  was defined in \cite{Apl} in the 
following way. Let $S$ denote $E$ but with the singular metric inherited from $F$. Then $\1_{X\setminus Z} c(F/S)$ is 
locally integrable in $X$ and 
$$
M^{g,a}:=\1_Z dd^c(\log|g|^2 c(F/S)).
$$
If $\pi\colon X'\to X$ is a suitable modification, then $\pi^*c(S)$ and $\pi^* c(F/S)$ are smooth in $X'$  and so
there is a smooth form $v$ such that
$dd^c v=\pi^*c(F)-\pi^*c(S) \pi^*c(F/S)$.  By arguments as in the proof of Proposition~\ref{avig}
it follows that
$M^{g,a}$ is in the same class in $\mathcal B(X)$ as 
$$
\1_Z dd^c(\log|g|^2  c(F)/c(S))=c(F) \1_Z dd^c(\log|g|^2\sum_{\ell=0}^\infty \la dd^c\log|g|^2\ra^\ell)=c(F)\w M^g
=:M^{g,b}. 
$$
In particular, $M^{g,a}$ and $M^g$, as well as $M^{g,b}$, have the same multiplicities and fixed part. 
In case  $F$ has a trivial metric, these three currents coincide. 
\end{remark}

Let us conclude by mentioning two natural question that are not discussed in this paper.  The classical 
Poincar\'e-Lelong formula sometimes occurs in the form
$\dbar(1/g)\w D g/2\pi i=[\dv g],$
where $D$ is the Chern connection, which means that 
$$
\dbar\frac{1}{g}\w s(F) D g/2\pi i =M^g.
$$
Thus $M^g$ is a product of a residue current and a smooth form.  In a similar way the 
current  $M^{g,a}$ in Remark~\ref{apl}, see \cite[(6.4)]{Apl}, 
 can be written
$M^{g,a}=R^g \cdot \varphi,$
where $R^g$ is the Bochner-Martinelli residue current and 
$\varphi$ is a matrix of smooth forms involving both $Dg$ and the curvature tensor. 
We do not know whether there are analogues for $M^g$ even when $E$ is a line bundle.
Another natural question is whether some assumptions of positivity/negativity on $F$ and/or $E$ will imply positivity
of $M^g$;  see \cite{Apl} for some results of this kind for $M^{g,a}$.

 \bigskip

\noindent{\bf Acknowledment} The author is grateful to %
Richard L\"ark\"ang, H\aa kan Samuelsson Kalm and Elizabeth Wulcan for 
valuable discussions related to this paper, as well as to the referees for pointing out
several mistakes and obscurities.

\def\listing#1#2#3{{\sc #1}:\ {\it #2},\ #3.}

\end{document}